\DeclareMathAlphabet{\mathscr}{OT1}{pzc}{m}{it} 
\newtheorem{theorem}{Theorem}
\newtheorem{corollary}[theorem]{Corollary}
\newtheorem{lemma}[theorem]{Lemma}
\newtheorem{statement}{Statement}
\newtheorem{definition}{Definition}
\newtheorem{modification}{Modification}
\newcommand{\etal}{{et al.~}}
\newenvironment{ntheorem}[1]{\medskip\noindent{\bf Theorem~#1.}\it}{\par}
\newenvironment{nlemma}[1]{\medskip\noindent{\bf Lemma~#1.}\it}{\par}
\newenvironment{ndefinition}[1]{\medskip\noindent{\bf Definition~#1.}\it}{\par}
\newcommand{\argmin}{\operatornamewithlimits{argmin}}
\newcommand{\eat}[1]{}
\newcounter{lp}
\newcommand{\rmax}{r_{m}}
\newcommand{\bin}{\mathop{Bin}}
\newcommand{\ER}{\text{Erd\H{o}s-R\'{e}nyi}}
\DeclareMathOperator{\Var}{Var}
\newcommand{\er}{Erd\H{o}s-R\'{e}nyi}
\newcommand{\E}{\mathbb{E}}
\newcommand{\seed}{\varphi}
\newcommand{\pthreshold}{\Phi}
\newcommand{\diminish}{{\sc Diminish}}
\newcommand{\sequester}{{\sc Sequester}}
\newcommand{\delay}{{\sc Delay}}
\newcommand{\bolster}{{\sc Bolster}}
\newcommand{\bolstera}{{\sc Bolster-A}}
\newcommand{\bolsterb}{{\sc Bolster-B}}
\newcommand{\tc}{{t^*}}
\newcommand{\I}{{\mathcal I}}
\newcommand{\HH}{{\mathcal H}}
\title{Behavioral Intervention and Non-Uniform Bootstrap Percolation}
\date{}
\author{Peter Ballen
\and 
Sudipto Guha\thanks{Department of Computer and Information Sciences, University of Pennsylvania. Email: {\tt \{pballen,sudipto\}@cis.upenn.edu}. Research supported in part by NSF Awards CCF-1117216, IIS 1546151.}}
\begin{document}
\maketitle
\begin{abstract}
Bootstrap percolation is an often used model to study the spread of diseases, rumors, and information on sparse random graphs. The percolation process demonstrates a critical value such that the graph is either almost completely affected or almost completely unaffected based on the initial seed being larger or smaller than the critical value.
In this paper, we consider behavioral interventions, that is, once the
percolation has affected a substantial fraction of the nodes, an
external advisory suggests simple policies to modify behavior (for
example, asking vertices to reduce contact by randomly deleting edges)
in order to stop the spread of false information or disease. We
analyze some natural interventions and show that the interventions
themselves satisfy a similar critical transition.

To analyze intervention strategies we provide the first analytic determination of the critical value for basic bootstrap percolation in random graphs when the vertex thresholds are nonuniform and provide an efficient algorithm. This result also helps solve the
problem of ``Percolation with Coinflips'' when the infection process
is not deterministic -- which has been a criticism about the model. 
We also extend the results to ``clustered'' random
graphs thereby extending the classes of graphs considered. In these graphs the vertices are grouped in a small number of clusters, the clusters model a fixed communication network and the
edge probability is dependent if the vertices are in ``close'' or
``far'' clusters.
We present simulations for both basic percolation and interventions 
that support our theoretical results.

\end{abstract}
\pagenumbering{arabic}

\section{Introduction}
Bootstrap percolation is a model of choice in many contexts modeling spread of information, diseases, etc.

\begin{definition}[Bootstrap percolation] 
Let $G$ be a graph with $n$ vertices and $r:V(G) \rightarrow
\mathbb{N}$ drawn from some family. Given $G$, select $\seed$ vertices 
uniformly at random without replacement and mark them as
infected. Vertex $u$ becomes infected when it has $r(u)$ or more
infected neighbors. $G$ is declared to be infected if $n - o(n)$
vertices are infected. The central question is to determine the
existence and quantify the parameter $\pthreshold$ such that the graph
exhibits a sharp dichotomy. That is, for any fixed $\epsilon>0$, if
$\seed > (1+\epsilon)
\pthreshold$ then $G$ becomes infected with probability vanishingly
close\footnote{For all fixed $\delta>0$, $\exists n(\epsilon,\delta)$
such that $ \forall n \geq n(\epsilon,\delta)$ the graph $G$ with $n$ vertices 
satisfies
the condition with probability at least $1-\delta$.} to $1$, and if
$\seed < (1-\epsilon)
\pthreshold$, $G$ does not become infected with probability vanishingly close to $1$. 
\label{def:bootstrapthresholds}
\end{definition}

An extensive and rich literature exists on the topic of bootstrap
percolation which we discuss shortly.  In this work we focus on
decentralized {\bf behavioral intervention strategies}. Suppose that
the infection is propagating sufficiently slowly. After the
percolation has spread to $\lambda n$ nodes the nodes are instructed
to behave differently (e.g, communicate less, become less susceptible
to new information) which leads to increases of $r(u)$.
Alternatively, the nodes reduce contact -- which corresponds to
dropping edges at random. If the infection has spread reasonably then
the graph already has a ``residual state'' and not all interventions
are useful -- thus we need to quantify how even simple transformations
affect the percolation, and such analysis does not exist in the
literature. The natural questions we ask here are: {\em Does the
required intervention also exhibit a sharp phase transition between
failure and success? Can that region of transition be explicitly
quantified?}

We envision the primary application of large scale intervention to be
useful in the domain of {\bf swarm of particles or agents} which
choose a random network to interact with each other
\cite{swarm}. The intervention in this context arise from the
following: if a large fraction of the swarm is showing undesirable
behavior which is spreading -- what is the effort required to
stabilize such a system? The same question can be asked for
communication networks of machines which often adopt a random topology
for communication and efficiency purposes. In particular we focus on a
hierarchically clustered graph where $n/k$ vertices are in each
cluster and the communication between two nodes in different clusters
is an independent random variable which only depends on the finite
cluster topology defined on the $k$ supernodes. 
We intentionally do not discuss networks with power law distribution
in this paper because in such graphs, the spread shows a phase transition and the graph is infected almost immediately or not at all \cite{powerlaw}. In particular the
infection spreads to all high degree nodes in generation 1, then to a large percentage of the graph in generation 2. Intervention is difficult
to imagine in such a context given such a dramatic change in the number of infected nodes a single
step. Moreover in the context of swarms or social networks for
machines, power law behavior is unlikely to be desirable from the
perspective of communication bottlenecks. 

\paragraph{Challenges and Context.} 
Perhaps unsurprisingly, to prove
sharp dichotomy results for intervention, we need to strengthen and extend existing
results for bootstrap percolation for random graphs to many natural
generations of independent interest. Consider:

\medskip
{\bf (a1) Non-Uniform Thresholds.} The overwhelming majority of the literature 
focuses on uniform constant thresholds, that is, $r(u)=r$ is the same
constant for all vertices. Even for the simplest possible random graph
model, the \ER\ model, classic results such as that of Janson et al
\cite{gnp} (see also \cite{scalia}) only provide bounds for this uniform case. This has to be remedied to provide twosided analysis of interventions -- because at the time 
the intervention happens, there is already residual state (the set of infected vertices). For a healthy vertex $u$ with $3$  infected neighbors, the threshold is now $r(u)-3$. This corresponds to a distributional specification of $r(u)$ which is a natural problem. 

\medskip
{\bf (a2) Small number of early adopters or easily influenced/susceptible  nodes.} 
Moreover the existing literature on bootstrap percolation focuses on
the case where the vertex thresholds are greater than $2$. This is
understandable, because threshold $1$ correspond to a connectivity. In particular for a 
\ER\ graph $G(n,p)$ if $np>1$ then there exists a giant connected component. Therefore 
if the fraction of threshold vertices is $\zeta_1$ and we have
$np\zeta_1>1$ then we will have a giant connected component in the
subgraph induced by the threshold $1$ vertices and percolation will be
instantaneous. However the existing literature does not handle the
complementary and natural regime where $np\zeta_1 \leq 1 -\beta$ for some
$\beta>0$ and $\zeta_1 \ll 1$ --- that is, we have a few
(non-negligible) ``early adopters'' who are influenced as soon as they
are in contact with a new idea or ``easily'' susceptiple individuals
who fall sick at first contact, but the remainder of the vertices
exhibit the key bootstrap percolation property of waiting to see more
evidence of sufficient contact.

\medskip
{\bf (a3) Non-Deterministic Transitions.} The behavior of bootstrap percolation that 
a node deterministically becomes infected when $r(u)$ of its neighbors
are infected have often been criticized. A slightly modified but very
natural model is {\em Percolation with Coin Flips}: an individual node
$u$ becomes susceptible (but not infected) after contact with $s(u)$
infected nodes. Each subsequent contact with an infected node infects
$u$ with probability $z(u)$, say determined by an independent coin
flip. Intuitively node $u$ behaves like having a threshold of $r(u)
\approx s(u)+1/z(u)$ but the transitions are not
deterministic. However as discussed in the example above an expected
threshold of $5r(u)/4$ can be worse than a deterministic threshold of
$r(u)$, and therefore the intuition $r(u) \approx s(u)+1/z(u)$ is not
usable for analysis. No analysis of this natural problem of 
percolation with coinflips exists in the literature to date.

\medskip
{\bf (a4) Hierarchical Networks with Few Levels.} No quantitative analysis of 
sharp dichotomy for bootstrap percolation 
exists for random graphs which are hierarchical in nature -- even for 
hierarchies which are just two levels! While \ER\ Graphs certainly are not often a sufficient model of behavior, hierarchical models can 
model complex phenomenon \cite{hier,hier2}.
Note however that multilevel iterative products lead to power law behavior \cite{kronecker}.

\medskip
We note that there has been studies on stopping the spread of
infection in social networks -- however those strategies have
typically been (i) {\em centralized or before the fact}, i.e., before
the disease starts spreading, see \cite{KDD2014} and references therein;
or (ii) {\em $0/1$ vaccination}, i.e., a node is removed from the
graph or unmodified, see \cite{vaccination} and references
therein. None of those approaches solve (a1)--(a3).
We discuss the result of Janson et al \cite{gnp} (see also
\cite{scalia}) before proceeding further, other related work which are somewhat 
orthogonal to the line of inquiry in this paper is discussed at the
end of the section.  In the $G(n,p)$ notation for \ER\ graphs, an edge
between a pair of vertices is present with probability $p$
(independent of other edges). Under a set of standard assumptions,
such as $pn=\omega(1)$ (a slowly growing function of $n$)
the dichotomy occurs when  
\[ \left(1-\frac1r\right) \left(   \frac{(r-1)!}{np^r}  \right)^{1/(r-1)}\]
vertices are seeded initially. Here  $r(u)=r$ for all $n$ vertices. 
The results on non-uniform
thresholds are minimal. Watts \cite{majorityrule} studied the case on
\ER\ graphs where $r(u) = c \mathop{deg} (u)$ for some $c \in [0,1]$
. Amini \cite{functionofdegree} studied the case on random graphs of a
given degree sequence where $r(u) = g( \mathop{deg}(u))$ and $g$ is a
fixed deterministic function -- however the results in that paper
demonstrate the existence of a sharp dichotomy and explicitly leave
open the computational question. Note however that such arguments
cannot work if $g()$ is changed midway through the percolation as is
the case in interventions. In the context of fixed graphs with $r(u)=r$ for all vertices, Holroyd
\cite{2gridsharp} proved a bound on $a$ that leads to infection on the
2-dimensional grid, which was later improved by Gravner et al
\cite{2gridshaper}. Balogh et al proved a corresponding bound for the
3-dimensional grid
\cite{3gridsharp}, and later proved a general bound for the
$d$-dimensional grid \cite{dgridsharp}. Other results have been found
for hypercubes \cite{hypercube}, tori \cite{torus}, expander graphs
\cite{expanders}, homogeneous trees \cite{homogeneoustree}, 
regular trees \cite{regulartree} and $d$-regular graphs \cite{dregular}. For an arbitrary $G$, approximating
the minimum $\pthreshold$ that leads to infection within small factors is
hard under reasonable complexity assumptions \cite{nphard}. 

\paragraph{Results and Techniques.}
We discuss basic percolation problems (a1)--(a4) in
\ref{sec:contrib-basic}. We discuss interventions next 
in Section~\ref{sec:contrib-intervention} and provide proofs of sharp dichotomy
and consider several simulations in Section~\ref{sec:simulations}

\subsection{Results for Basic Percolation Problems} 
\label{sec:contrib-basic}
We resolve the three scenarios
(a1)--(a4) posited above. In particular, we analyze a {\em Templated
Multisection} graph where the vertex specific thresholds $r(u)$ satisfy
$1 \leq r(u) \leq \rmax$ for some constant $\rmax$. The Templated  Multisection graph is defined
as:

\begin{definition}[{\sc Templated Multisection Graph}]
Let $[k]=\{0,1,\ldots,k-1\}$. Suppose that we are provided a finite template graph $F$ which is a undirected regular graph on $[k]$. The neighborhood of vertex $i \in [k]$ is given by the function $N_F: [k] \rightarrow 2^{[k]}$; where $i \in N_F(j)$ iff $j \in N_F(i)$. Suppose $|N_F(i)| = k_p \leq k$. 
Define $G$ to be a graph with $n$ vertices evenly partitioned into
$k$ clusters of $n/k$ vertices each. Let $\chi(u)$ denote the index of the 
partition $u$ belongs
to. If $\chi(u) \in N_F(\chi(v))$, include edge $(u,v)$ with probability
$p$. Otherwise, include edge $(u,v)$ with probability $q$.
We denote the family of graphs defined in this process as $TM(F,n, k_p, k_q, p, q)$ where $k_q + k_p=k$.
\label{def:tmgraph}
\end{definition}

While some of the results in this paper will extend to clusters of
non-uniform sizes (provided each cluster is large), we omit their
discussion in the interest of brevity.  The $TM(F,n, k_p, k_q, p, q)$
family is illustrated by the following:

\begin{itemize}\parskip=0in
\item \ER\ graphs. This corresponds to a single cluster, $k=1$ and $N_F(0)=\{0\}$. In this case $k_q=q=0$.
\item The Planted Multisection graph is a generalization to $k$ clusters with $N_F(i)=\{i\}$. In this case $k_q=k-1$.
\item Any succinctly described constant degree graph can be used as the template graph 
-- since the intuitive purpose of $F$ is to determine the
communication behavior of nodes in the clusters. Of particular
interest is the ``ring'' type communication where $N_F(i) = \{ i \pm a
\mod{k} \}$ for $|a| \leq \ell$ which defines a ring of $k$ vertices
each node connected to $2\ell$ closest neighbor. We can also
explicitly use any fixed size small world graph.
\end{itemize}

\noindent {\bf Notation:} We use $\eta = n/k$ to denote the number of vertices in a cluster 
and use $\phi = k_p p + k_q q$. The parameters $\eta,\phi$ correspond
to $n,p$ in the \ER\ model. We say $u$ is `near' $v$ if
$\chi(u) \in N_F(\chi(v))$ and $u$ is `far' from $v$ if $\chi(u) \notin N_F(\chi(v))$.
$\bin(x, \lambda)$ denotes a binomial
distribution with $x$ elements and per trial probability of success
$\lambda$. 

\begin{theorem}[Proved in Section~\ref{sec:percolationthresholds}]
\label{thm:mainthm}
Let $\rmax = O(1)$ and fix $\delta,\beta,\epsilon > 0$. 
Let $\{\zeta_r\}_{r=1}^{\rmax}, \sum_{r=1}^{\rmax} \zeta_r = 1$
define a distribution such that $\zeta_1 < 2\zeta_2/3$.
Fix $q\leq p \ll 1/2$.
Given a graph $G$ from the family $TM(F,n,k_p, k_q, p, q)$, with sufficiently many nodes $n \geq n_0(\delta,\beta,\epsilon,k)$ for each $u$, assign $u$ threshold $r$ with
probability $\zeta_r$. 
Let $\phi=p k_p + q k_p$, note $\eta\phi$ is the expected degree.
Define
\begin{eqnarray*}
& & \pi_r(t) =  \Pr[ \bin(k_p t, p) + \bin(k_q t, q) \geq r] \\
& & A(t) = \sum_{r=1}^{\rmax} \zeta_r \pi_r(t) \qquad f(\seed,t) = (n - \seed)A(t) - kt + \seed \\
&& \tc(\seed) = \argmin_{t \leq 1/(3\phi)} f(\seed,t) \\
&& \pthreshold = \min_\seed \{ \seed | \forall \ t \leq 1/(3\phi),  f(\seed,t) \geq 0 \} \qquad \tc=\tc(\pthreshold)
\end{eqnarray*}
Assume (i) $\eta\phi\zeta_1 \leq 1-\beta$, i.e., the expected number of threshold $1$ vertices adjacent to a node is small \footnote{If $\eta\phi\zeta_1 > 1$ then discussion in (a2) applies.}
(ii) $\eta \phi = o(\sqrt{\beta \eta})$, i.e., the graph is not dense otherwise percolation is immediate. Then
\begin{itemize}
\item If $\phi t \leq 1/3$ then
$A(t)$ is convex.
Moreover $\tc \geq \frac{\beta n}{2k(\phi \eta)^2} \rightarrow \infty$ as $n\rightarrow \infty$.

\item Suppose we choose $\seed$ vertices uniformly at random and
set them as infected. If $\seed < (1-\epsilon)\pthreshold$ then $G$ 
does not become becomes infected with probability 
at least $1-O(\epsilon^{-2}/(\tc\beta^{2(\rmax+1)}))$.
If $\seed > (1+\epsilon)\pthreshold$ then an absolute constant fraction of 
the nodes in $G$ become infected with probability at least 
$1-O(\epsilon^{-2}/(\tc\beta^{2(\rmax+1)}))$ (slightly larger constant). Moreover if the 
expected degree $\phi \eta$ is a slowly growing function then with same expression of 
probability close to $1$, the percolation does not stop till $\eta -o(\eta)$ nodes are infected.
\end{itemize}
\end{theorem}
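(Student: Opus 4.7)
The plan is to mirror the classical exploration/coupling argument of Janson et al for $G(n,p)$ but adapted to the two new features of this setting: cluster structure and heterogeneous thresholds. I would reveal the random graph edge-by-edge via a Scalia-Tomba style process: process one infected vertex at a time, exposing all of its incident edges to currently uninfected vertices, and track $Z_t$, the number of ``active'' (infected but unprocessed) vertices, together with $S_t$, the total number infected so far. By the symmetry of $TM(F,n,k_p,k_q,p,q)$ and the regularity of $F$, if we process $t$ vertices from each cluster (so $kt$ processed total), the expected number of newly infected vertices among the $n-\seed$ initially healthy ones is $(n-\seed)A(t)$: a healthy $u$ accumulates a $\bin(k_pt,p)+\bin(k_qt,q)$ number of infected neighbors and is infected exactly when this exceeds its threshold $r(u)$, so averaging over $\zeta_r$ yields $A(t)$. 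The expected active count is therefore $\E[Z_{kt}] = f(\seed,t)$.

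For convexity of $A$, I would analyze the discrete second difference $\Delta^2 \pi_r(t)$. Writing $\alpha=(1-p)^{k_p}(1-q)^{k_q}$, a direct computation gives $\Delta^2 \pi_1(t) = -\alpha^t(1-\alpha)^2 < 0$, so $\pi_1$ alone is concave. However, for $r\geq 2$ and $\phi t \leq 1/3$ the leading behaviour of $\pi_r(t)$ is $\Theta((\phi t)^r)$, whose second difference is positive of order $r(r-1)\phi^r t^{r-2}$; the hypothesis $\zeta_1 < 2\zeta_2/3$ is exactly what is needed to guarantee that $\zeta_2\Delta^2\pi_2$ dominates $\zeta_1|\Delta^2\pi_1|$ once Taylor expansions are carried out to second order in $\phi t$. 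The lower bound on $\tc$ then follows from the first-order condition: at any interior minimum $(n-\seed)A'(t) = k$, so $A'(\tc) \geq k/n$. Expanding $A'(t) \leq \zeta_1\phi + C\phi^2 t$ for $\phi t \leq 1/3$ and using $\eta\phi\zeta_1 \leq 1-\beta$, i.e.\ $\zeta_1\phi \leq (1-\beta)k/n$, one solves for $\phi^2 \tc \geq \beta k/(2n)$ up to constants, giving $\tc \geq \beta n/(2k(\phi\eta)^2)$; assumption (ii) $\eta\phi = o(\sqrt{\beta\eta})$ ensures this lower bound diverges with $n$.

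The dichotomy then follows from concentrating $Z_{kt}$ around $f(\seed,t)$. Each processing step contributes at most $O(\rmax)$ to $Z$ per exposed edge, and the number of exposed edges per step is tightly concentrated around $\phi\eta$ by Chernoff on binomials; applying Freedman's martingale inequality across $kt$ steps gives deviations of order $\phi\eta\sqrt{kt}$, which under assumption (ii) is $o(f(\seed,\tc))$ in the relevant regime. On the subcritical side $\seed < (1-\epsilon)\pthreshold$, by definition and the convexity of $f(\seed,\cdot)$ there is some $t_0 \leq 1/(3\phi)$ with $f(\seed,t_0) \leq -\Omega(\epsilon\pthreshold)$; concentration forces $Z$ to hit $0$ before time $kt_0$, halting the process with $o(n)$ total infections. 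On the supercritical side $\seed > (1+\epsilon)\pthreshold$, we obtain $f(\seed,t) \geq \Omega(\epsilon\pthreshold)$ throughout $[1,1/(3\phi)]$, so w.h.p.\ the process survives past $t = \Theta(1/\phi)$; from that state each cluster contains $\Omega(\eta)$ infected vertices and a separate supercritical comparison (in the spirit of Janson et al) drives the infection to $\eta - o(\eta)$ in each cluster whenever $\phi\eta \to \infty$. Tracking constants through the Freedman bound yields the stated failure probability $O(\epsilon^{-2}/(\tc\beta^{2(\rmax+1)}))$, the $\beta$ factors coming from bounding the tails of the threshold-$r$ binomials away from their means.

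The hard part will be justifying that the one-parameter scalar drift $f(\seed,t)$ really governs a process whose natural state is $k$-dimensional (one active-count per cluster). I would need a symmetry/coupling argument showing that, with high probability, the active set remains approximately balanced across clusters because the template $F$ is regular and seeds are placed uniformly at random, so any per-step cluster imbalance is damped in expectation and the process can be projected onto the scalar $t$ with controlled error. A secondary obstacle is that $\pi_r(t)$ is a convolution of two binomials with different parameters $p \neq q$, so the convexity/concavity pattern of the threshold-$r$ infection probabilities cannot be read off a single tail function as in the uniform $G(n,p)$ analysis; the delicate algebraic bookkeeping for $\Delta^2 A$ under only the mild assumption $\zeta_1 < 2\zeta_2/3$ is what makes the extension non-trivial and is the step where I would expect most of the technical effort to concentrate.
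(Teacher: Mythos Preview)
Your overall architecture matches the paper's---both follow the Janson et al.\ exploration template and reduce to the scalar drift $f(\seed,t)$---but the step you yourself flag as hardest is a genuine gap, and the paper resolves it with a device you do not have. Rather than arguing that the per-cluster active counts stay approximately balanced, the paper \emph{forces} balance by introducing two auxiliary processes, \emph{Halting} and \emph{Cheating} three-stage percolation, in which at every timestep exactly one latent vertex in each cluster is made contagious. Halting stops the first time any cluster runs out of latent vertices; Cheating instead promotes a healthy vertex in an empty cluster. These two processes sandwich the true percolation and are perfectly balanced across clusters by construction, so the scalar $f(\seed,t)$ governs each cluster exactly and the $k$-dimensional projection problem disappears. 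Your proposed symmetry/damping argument is then unnecessary---and would in any case be hard to make rigorous, since once one cluster lags the imbalance can self-amplify.

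A second, smaller divergence: for concentration you propose Freedman on the increments, whereas the paper builds for each cluster $i$ and threshold $r$ the explicit martingale $\xi(t)=(S^i_r(t)-\E[S^i_r(t)])/(1-\pi_r(t))$ together with its time-reverse, and applies Doob's $L^2$ maximal inequality to control $\sup_t|S^i_r(t)-\E S^i_r(t)|$ directly. Summing over $r$ then requires the growth estimates $\pi_r(xt)\le C\bigl(x/(1-p)\bigr)^r\pi_r(t)$ of the paper's Lemma~2, and it is this step---comparing $\pi_r$ at $t_0=\Theta(\tc/\beta^2)$ back to $\pi_r(\tc)$---that produces the $\beta^{2(\rmax+1)}$ in the failure probability, not tail bounds on individual binomials as you suggest. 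Your convexity and $\tc$ sketches are in the right spirit; the paper carries them out with continuous second derivatives of the products $B_i(t)C_j(t)$ (rather than discrete differences) and, for the lower bound on $\tc$, by bounding $A$ above by a Le~Cam/Poisson envelope $\bar A$ before applying the first-order condition.
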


\noindent 
The probabilities of convergence with only absolute constants in the $O()$ all evaluate to
 $1-O\left(\frac{\rmax^{O(\rmax)}k(\eta\phi)^2}{\epsilon^2\beta^{2\rmax+3}n}\right)$ and is (inverse) polynomially close to $1$ when the expected degree $\eta\phi$ is $o(\sqrt{n})$. 
Theorem~\ref{thm:mainthm} follows the argument template of Janson
\etal \cite{gnp}, but differs significantly in the internal analysis. In case of 
uniform thresholds and a single cluster, it was sufficient to
approximate the Binomial by Poisson in defining $A(t)$. However the
Poisson apprixation in $A(t)$ and a blind application of \cite{gnp}
does not provide us the desired result because {\em now the
approximations have to commensurate with the different thresholds simultaneously}.
At the same time the heart of the proof in \cite{gnp} relies on the construction of a
Martingale and a reverse Martingale for a fixed uniform threshold
$r(u)=r$ for an \ER\ graph. We show that we can construct similar
martingales as the value of $r$ is varied, even as the graph has
multiple clusters. Martingales are invariant under addition and thus
the key is to bound their step sizes. However the addition of martingales 
for different thresholds is manageable only with more precise approximations of $A(t)$.
The changes in the internal analysis reflects this key difference. However the 
surprising insight of the overall proof is that {\em even though the percolation is
nonlinear in the connectivity parameter, $\pthreshold$ is linear in
the distribution parameters, and the effects are separable!}

The next result is a consequence of the separability and
distributional result proven in
Theorem~\ref{thm:mainthm}. The basic intuition 
is that the coins can be ``preflipped'' ahead of time to
reduce percolation with coinflips to a {\em distribution} over
percolation with non-uniform thresholds.

\begin{corollary}[{\sc Percolation with Coinflips}]
\label{cor:coinflips}
For $G=TM(F,n,k_p, k_q, p, q)$, suppose the distribution of thresholds
of is chosen as follows: a vertex becomes susceptible after $s(u)$ neighbors of $u$ have been infected. Subsequent to
becoming susceptible, a vertex $u$ becomes infected with probability
$z(u)>\delta>0$ as soon as a new neighbor becomes infected where $\delta \in (0,1]$ is a constant or when $\rmax$ neighbors are infected.
Given this setting
we can determine the percolation threshold $\pthreshold$ explicitly, even for non-uniform $z(u)$.
Observe that if $s(u)\geq 1$ for a large fraction of the nodes then the condition $\zeta_1 \leq 2\zeta_2/3$ holds, e.g., if $s(u)\geq 1$ for $2/3$ of the nodes then $\zeta_1 \leq 2\zeta_2/3$ if all $z(u) \leq 1/2$. 
\end{corollary}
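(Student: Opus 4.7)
The plan is to reduce percolation with coinflips to bootstrap percolation with independent non-uniform thresholds via a ``preflipping'' coupling, and then invoke Theorem~\ref{thm:mainthm}.

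For each vertex $u$, independently of the graph, the seed, and the infection dynamics, draw an effective threshold $r(u)$ by preflipping a sequence of independent $z(u)$-coins. If $K$ denotes the index of the first success, set $r(u)=s(u)+K$ when $K \leq \rmax - s(u) - 1$, and $r(u) = \rmax$ otherwise (reflecting the deterministic cap). This yields
\begin{align*}
\Pr[r(u) = s(u)+k] &= (1-z(u))^{k-1} z(u), \quad k = 1,\ldots,\rmax - s(u) - 1,\\
\Pr[r(u) = \rmax] &= (1-z(u))^{\rmax - s(u) - 1}.
\end{align*}
Because the preflipped coins are independent of everything else, they can be coupled to the ``online'' coins of the original process: when the $(s(u)+k)$-th neighbor of $u$ becomes infected we reveal the $k$-th preflipped coin. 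Under this coupling $u$ is infected exactly when its infected-neighbor count reaches $r(u)$, so percolation with coinflips is distributionally identical to bootstrap percolation on $G$ with independent per-vertex thresholds $r(u) \in \{1,\dots,\rmax\}$.

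Define $\zeta_r = \frac{1}{n}\sum_u \Pr[r(u)=r]$ from the formulas above. When $(s(u),z(u))$ is common across vertices this is literally the iid setting required by Theorem~\ref{thm:mainthm}. For heterogeneous $(s(u),z(u))$ the thresholds are independent but non-identically distributed; one re-runs the martingale and reverse-martingale construction underlying Theorem~\ref{thm:mainthm} with $A(t) = \sum_r \zeta_r \pi_r(t)$ reinterpreted as the average over vertices of $\Pr[\bin(k_p t,p) + \bin(k_q t,q) \geq r(u)]$. Because that construction is additive across vertices and uses only per-vertex boundedness $r(u) \leq \rmax$ together with independence, it carries over verbatim. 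The side condition $\zeta_1 \leq 2\zeta_2/3$ is then a direct arithmetic check on the closed forms for $\Pr[r(u)=r]$ under the stated assumption that $s(u)\geq 1$ for a sufficiently large fraction of vertices and $z(u)\leq 1/2$: $\zeta_1$ receives mass only from the small set of vertices with $s(u)=0$, whereas $\zeta_2$ receives mass $(1-z(u))z(u)\geq z(u)/2$ from those same vertices plus mass $z(u)$ from every $s(u)=1$ vertex, and the constants combine to the required inequality.

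The main (and essentially only) obstacle is verifying that the proof of Theorem~\ref{thm:mainthm} survives the passage from iid thresholds to independent non-iid thresholds. This is routine because the martingales there are sums of per-vertex contributions whose step sizes depend only on the uniform bound $\rmax$ and on the aggregate $A(t)$, both preserved under heterogeneous marginals. Once this is noted, plugging the preflipped threshold distribution $\zeta$ into the explicit formula for $\pthreshold$ in Theorem~\ref{thm:mainthm} gives the coinflip percolation threshold explicitly, completing the proof.
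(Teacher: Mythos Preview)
Your proposal is correct and follows exactly the paper's approach: the paper states only that the corollary follows from Theorem~\ref{thm:mainthm} by ``preflipping'' the coins to convert coinflip percolation into bootstrap percolation with a distribution over non-uniform thresholds, and you carry this out explicitly with the correct geometric-plus-cap formula for $\Pr[r(u)=s(u)+k]$. Your additional observation that the martingale construction underlying Theorem~\ref{thm:mainthm} is additive over vertices and hence extends to independent but non-identically-distributed thresholds (needed to cover genuinely non-uniform $z(u)$) goes beyond the paper's one-line justification but is consistent with its claim.
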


\paragraph{Summary:} We ran several simulations to verify the application of 
Theorem~\ref{thm:mainthm} in the context of multiple graphs which are
``small network of clusters''. We focused on graphs with $n=10000$
nodes and rings with $10,20$ clusters, the $3$-D cube with $8$
clusters alongside the standard $G(n,p)$ graph. We varied the
distribution of the threshold in simple ways so that the results can
be verified conceptually.  The theorem and the simulations agreed and
these are presented in Section~\ref{sec:simulations}. The network did
not affect the thresholds significantly, but the expected degree and
the distribution of the vertex thresholds had a strong impact as predicted. We now focus on intervention strategies.

\subsection{Results for Interventions} 
\label{sec:contrib-intervention}
We now discuss percolation problems when we have a chance to modify
the behavior of edges or vertices. The classic example is that after
the infection has spread to $\lambda n$ individuals, better health
practices are announced, i.e, which reduce contact (edges) or increase
the thresholds of susceptibility and infection for a vertex.  We
consider the following interventions:
\begin{itemize}\parskip=0in
\item \bolster.  This corresponds to assigned every vertex with threshold $r$ a new threshold $r'$ from a distribution $\zeta'(r)$. Note that $\zeta'(r_1)$ may be different from $\zeta'(r_2)$.
\item \delay. This corresponds to modifying $z(u)$ to $z'(u)$ in the coinflips model for the remainder of the percolation. This is a special case of \bolster\ .
\item \sequester. This corresponds to dropping the edges between healthy and the 
infected vertices independently with probability $1-\alpha_p$ for the edges corresponding 
to neighborhoods in $F$ and $1-\alpha_q$ for the other edges. The edges remain dropped throughout the percolation. Edges connecting two healthy vertices are unaffected.
\item \diminish. This corresponds to permanently 
dropping the edges between all vertices 
in the graph with probability $1-\alpha_p$ for the edges corresponding 
to neighborhoods in $F$ and $1-\alpha_q$ for the other edges.
\end{itemize}

\noindent 
Note that the standard strategy of {\em vaccination} corresponds to
setting $r(u)= \infty$ for a healthy node $u$ or equivalently,
removing that node $u$. There is a large literature on this removal
behavior, see \cite{vaccination} and references therein. Edge removal strategies have
been considered in the literature, see \cite{KDD2014} and references
therein. However none of the existing strategies can express the adaptive nature of the four interventions we consider.

\noindent

We assume that the original graph $G$ is chosen so that the assumptios in Theorem~\ref{thm:mainthm} are satisfied ($G$ is sufficiently sparse) and that $G$ has no threshold-1 vertices. We define $\tau$ to be the generation at which the intervention is applied. Let $\I(\tau)$ be the set of infected vertices at generation $\tau$ and define $\I(\tau-1)$ similarly. Let $\HH(r)$ be the set of threshold-r healthy vertices. This is all the data we need to determine whether the intervention is successful.

\begin{theorem}[Proved in Section~\ref{sec:intervention}]
\label{thm:interventionsuperthm}
Assume $n,p,q,r,\lambda$, $\I(\tau)$, $\I(\tau-1)$, $\HH(r)$ are known and that $\I(\tau) < k/(3 \phi)$. Given either $\zeta(r)$ (for \bolster ), $z'$ (for \delay ), or $\alpha_p$ and $\alpha_q$ (for \diminish\ and \sequester ), it is possible to determine whether the intervention is successful and $G$ becomes infected with probability $1-1/poly(n)$.
\end{theorem}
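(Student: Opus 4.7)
The plan is to recast each intervention as a fresh instance of bootstrap percolation on the residual graph induced by the healthy vertices $V \setminus \I(\tau)$, with $\I(\tau)$ acting as the new seed set, with a modified threshold distribution, and (for \diminish) modified edge probabilities. Once this reduction is in place, Theorem~\ref{thm:mainthm} is invoked directly on the post-intervention instance. First I would use the principle of deferred decisions via the standard exploration process: only edges incident to previously infected vertices have been ``used'' by the percolation up to generation $\tau$, so conditional on the data $(\I(\tau),\I(\tau-1),\HH(r))$ the edges among currently healthy vertices form an unexposed $TM(F, n-|\I(\tau)|, k_p, k_q, p, q)$ graph on the shrunken clusters (with probabilities rescaled by $\alpha_p,\alpha_q$ for \diminish).

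Second, I would compute the effective threshold distribution $\zeta^{\mathrm{eff}}$ on the healthy vertices. For $u\in\HH(r)$ let $j(u)$ be the number of neighbors of $u$ in $\I(\tau)$. In the post-intervention dynamics $u$ behaves like a vertex of threshold $r-j(u)$ in the residual random graph seeded by $\I(\tau)$: for \bolster\ the threshold $r$ is first resampled from $\zeta'$; for \sequester\ and \diminish\ the count $j(u)$ is first thinned to $\bin(j(u),\alpha_{\cdot})$ where the rate depends on whether $u$ is near or far from each infected neighbor; and \delay\ is the special case of \bolster\ determined by the new coin-flip probabilities $z'$, applied exactly as in Corollary~\ref{cor:coinflips}. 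Averaging over healthy vertices in each cluster yields a deterministic distribution $\zeta^{\mathrm{eff}}$, and standard Chernoff/Azuma bounds on the numbers of healthy threshold-$r$ vertices with each value of $j(u)$ show the empirical effective distribution is within $o(n)$ of $\zeta^{\mathrm{eff}}$ with probability $1-1/\poly(n)$.

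Third, I would substitute $\seed' = |\I(\tau)|$, $\zeta^{\mathrm{eff}}$, and the (possibly rescaled) edge probabilities into the formulas of Theorem~\ref{thm:mainthm} to obtain the critical value $\pthreshold^{\mathrm{eff}}$ for the residual process. The hypothesis $|\I(\tau)| < k/(3\phi)$ places $\seed'$ inside the window where $f(\seed,t)$ governs the percolation and $A(t)$ is convex, so the dichotomy of Theorem~\ref{thm:mainthm} applies verbatim: the intervention succeeds when $\seed' < (1-\epsilon)\pthreshold^{\mathrm{eff}}$ and fails when $\seed' > (1+\epsilon)\pthreshold^{\mathrm{eff}}$, and either side can be checked from the given data. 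The ``separability'' observation highlighted after Theorem~\ref{thm:mainthm} is what makes the arithmetic of mixing the per-$r$ contributions linear.

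The main obstacle will be Step~2: conditional on $u\in\HH(r)$ the count $j(u)$ is \emph{not} independent of $r$, since a high-threshold vertex can accumulate more infected neighbors before generation $\tau+1$ while still remaining uninfected. This is precisely why $\I(\tau-1)$ appears in the hypothesis and not just $\I(\tau)$: the neighbors of $u$ that were already infected at time $\tau-1$ must satisfy $j<r$ (the non-trivial conditioning that survival imposes), whereas the neighbors of $u$ that became infected exactly at generation $\tau$ are unconstrained and can be re-sampled from the residual random graph. Writing the posterior distribution of $j(u)$ as an explicit mixture of binomials/hypergeometrics on these two groups is the heart of the technical argument. A secondary concern is to verify the hypothesis $\zeta_1^{\mathrm{eff}} < 2\zeta_2^{\mathrm{eff}}/3$ required by Theorem~\ref{thm:mainthm} after the reduction: since $G$ has no threshold-$1$ vertices by assumption and the sparsity bound $\eta\phi\zeta_1 \leq 1-\beta$ degrades only by a factor controlled by $|\I(\tau)|/n$, this inequality continues to hold for the effective distribution in every regime where the intervention is non-degenerate.
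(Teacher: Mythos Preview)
Your high-level strategy---reduce the post-intervention process to a fresh bootstrap-percolation instance on the healthy vertices with residual thresholds, then invoke Theorem~\ref{thm:mainthm}---is exactly what the paper does, and your diagnosis of why $\I(\tau-1)$ is needed (to separate the constrained neighbors in $\I(\tau-1)$, where survival forces $j<r$, from the unconstrained neighbors in $\I(\tau)\setminus\I(\tau-1)$) matches the paper's Lemma~\ref{lem:HH}.

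There is, however, a genuine gap in Step~3. You set $\seed' = |\I(\tau)|$, but $\I(\tau)$ lies \emph{outside} the residual graph on $\HH = V\setminus\I(\tau)$ and so cannot serve as its seed set in the sense of Theorem~\ref{thm:mainthm}. Worse, you have already absorbed the influence of $\I(\tau)$ into the residual thresholds $r'(u)-j(u)$; using $\I(\tau)$ again as a seed double-counts. The paper's seed for the auxiliary graph $J$ (Equation~\eqref{eqn:jsseed}) is instead the set of healthy vertices that get infected at generation $\tau{+}1$, i.e.\ those $u\in\HH$ with $j(u)\ge r(u)$, counted by $\seed = |\HH|-|\HH|\sum_s j_s$. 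This quantity is typically far smaller than $|\I(\tau)|$, and comparing $|\I(\tau)|$ to $\pthreshold^{\mathrm{eff}}$ would give the wrong decision.

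A second gap concerns the hypothesis $|\I(\tau)| < k/(3\phi)$. You use it to assert that ``$\seed'$ lies in the window where $A(t)$ is convex,'' but convexity of $A$ is a condition on $t$, not on the seed. In the paper this hypothesis is used for something else entirely: writing $t=|\I(\tau)|/k$ so that $\phi t<1/3$, Lemma~\ref{lem:bcdhelper} gives $D_{a+1}(t) < \tfrac{\phi t}{1-\max\{p,q\}}D_a(t) < \tfrac23 D_a(t)$, whence $\Pr[u\in\HH_{a+1}] < \tfrac23\Pr[u\in\HH_a]$. This is exactly what forces $j_1 < \tfrac23 j_2$ for the residual threshold distribution, the precondition Theorem~\ref{thm:mainthm} needs. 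Your sketch for this step (``$G$ has no threshold-1 vertices and the sparsity bound degrades only by a factor controlled by $|\I(\tau)|/n$'') does not establish the inequality: the absence of threshold-1 vertices in $G$ says nothing by itself about how many healthy vertices end up with residual threshold~1.
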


When $\I(\tau) > k/(3 \phi)$, $\I(\tau+1)$ consists of a positive fraction of the nodes. In other words, the intervention has occured too late and there are too many infected nodes to do a meaningful analysis.
The key idea of the proof is that knowing $\I(\tau),  \I(\tau-1), \HH(r)$, we define $\HH_a$ to be the set of infected vertices with $a$ infected neighbors. $\Pr[u \in \HH_a]$ can be explicitly calculated. Using this information, we create a new TM graph $J$ with $|\HH|$ vertices and the same cluster structure as $G$. For every vertex $u \in \HH_a$, we add a vertex $v$ with threshold $r'(u)-a$ to $J$.
In this way, the probability $J$ becomes infected is equal to the probability $G$ becomes infected, and we can use Theorem~\ref{thm:mainthm}.

Less formally, when the intervention is applies to $G$, every vertex has a ``residual'' state which we can estimate. Interestingly, we can estimate this information knowing only $\I(\tau)$ and $\I(\tau-1)$; we do not need to know any information about the early generations, this corresponds to the memoryless martingale behavior of the basic percolation processes.

This construction also showcases the need to resolve (a1) and (a2). Even if $G$ has uniform thresholds, $J$ will have non-uniform thresholds and $J$ will have a small number of easily influenced threshold-1 vertices.

\subsection{Simulations}
\label{sec:simulations}

To test Theorem~\ref{thm:mainthm} and
Corollary~\ref{cor:coinflips} we performed several different
simulations.  The parameter $\epsilon=0.1$ and for each setting we
repeated the experiments for $50$ graphs and for each graph the
experiment was repeated $50$ times. We use the color {\bf green} to
denote the cases where the percolation stopped (graph was mostly healthy) and color {\bf red}
when the infection spread exceeded $90\%$ of the nodes. The number of vertices was
always $10000$. We also varied $k=\{1,10,20\}$. For $k>1$ we chose the
ring topology with $\ell=1$ (which corresponds to $k_p=3$).
Figure~\ref{fig1} consider nonuniform thresholds and for
$k=1$ we chose $p=10/n$. For $k=10$ we set $p=50/(3n)$ and
$q=50/(7n)$. For $k=20$, we chose $p=100/(3n)$ and $q=100/(17n)$. Note
that for all cases $pk_p\eta=5$ and $qk_q\eta=5$ where $\eta=n/k$, that is the average
degree is $5$ within the cluster and $5$ outside the cluster. 
We repeat the same settings as in Figure~\ref{fig1} for percolation with coinflips and the results are in Figure~\ref{fig-coin}. The case for $k=10$ was close to the two 
extreme cases shown. Again the results are parallel those in
Figure~\ref{fig1} even though $\rmax=20$.

\begin{figure}[H]
\begin{subfigure}[t]{0.3\textwidth}
\centering
\includegraphics[scale=0.4]{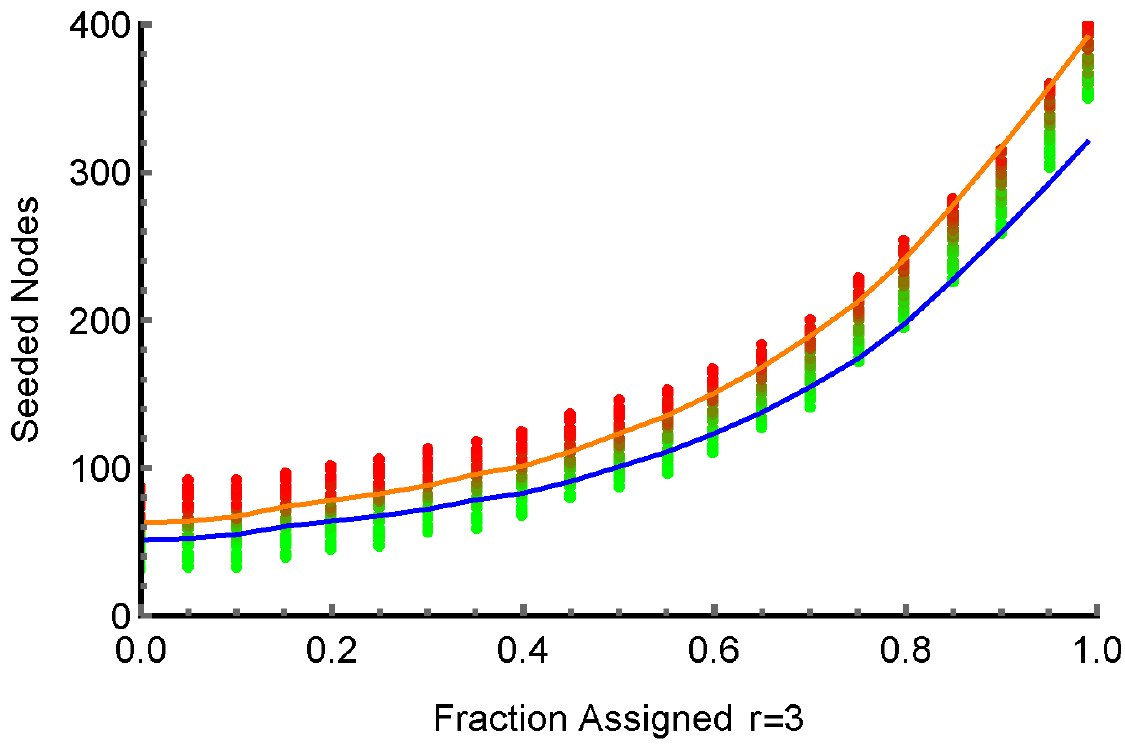}
\subcaption{\er\ $k=1$ case\label{fig1a}}
\end{subfigure}
\begin{subfigure}[t]{0.3\textwidth}
\centering
\includegraphics[scale=0.4]{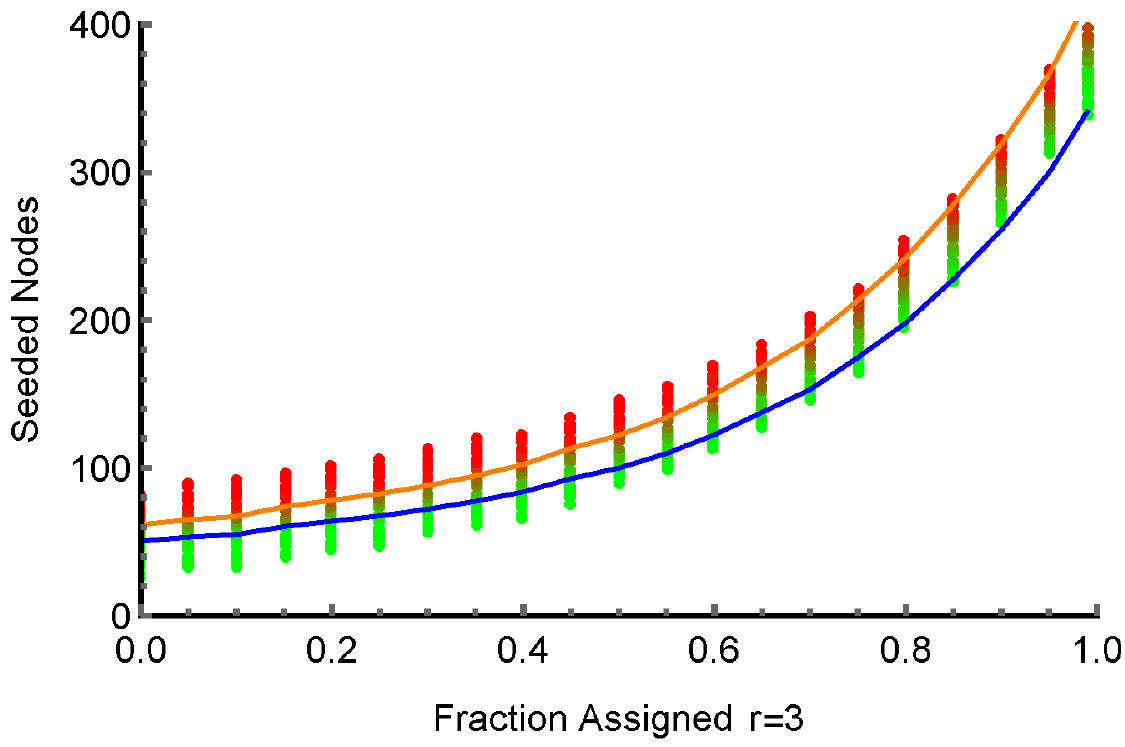}
\subcaption{Ring of $k=10$ clusters \label{fig1b}}
\end{subfigure}
\begin{subfigure}[t]{0.3\textwidth}
\centering
\includegraphics[scale=0.4]{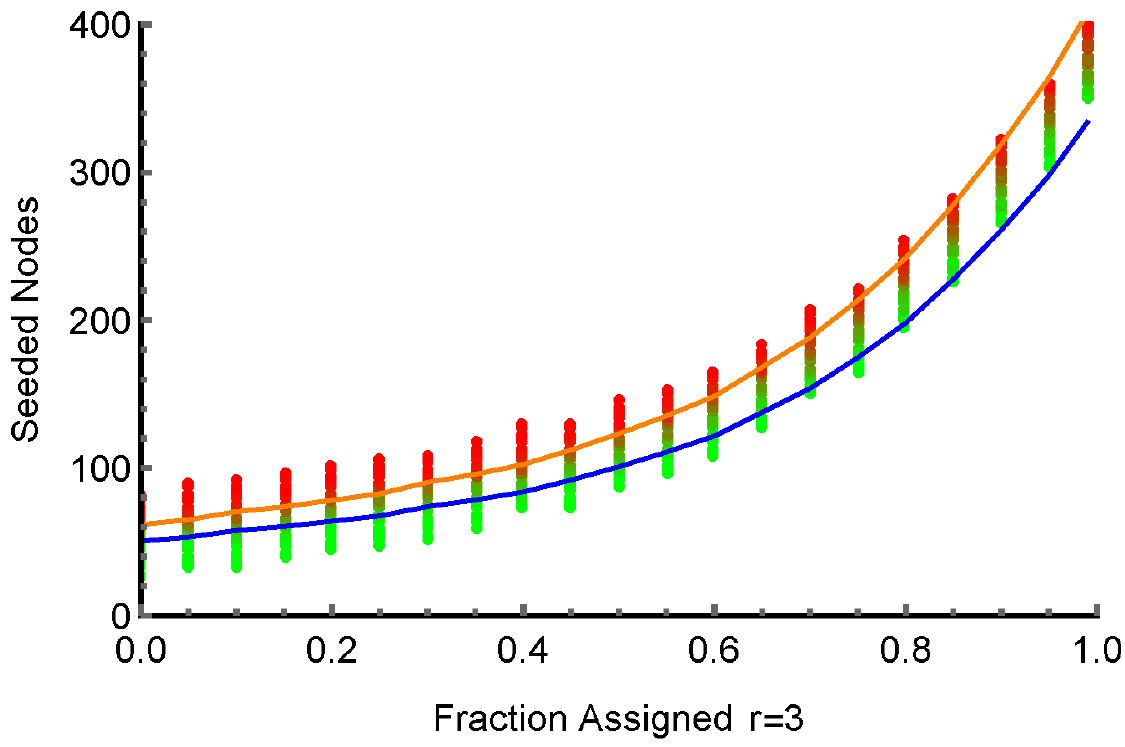}
\subcaption{Ring of $k=20$ clusters \label{fig1c}}
\end{subfigure}
\caption{\label{fig1} 
Non Uniform Thresholds: the $x$ axis indicates the fraction of
vertices which have threshold $3$ the remainder have threshold
$2$. The two lines correspond to
$(1-\epsilon)\pthreshold,(1+\epsilon)\pthreshold$ when the threshold
was $\pthreshold$. Each of the points correspond to the average of
$50$ experiments and the color depends on the fraction of times the
percolation succeeded (red) or stopped (green). For the chosen
settings Theorem~\ref{thm:mainthm} predicts that the thresholds would
be the same for $k=10$ and $k=20$. 
The $k=1$ case is different but the underlying random
variables are close in distribution.}
\end{figure}

\begin{figure}[H]
\begin{subfigure}[t]{0.5\textwidth}
\centering
\includegraphics[scale=0.45]{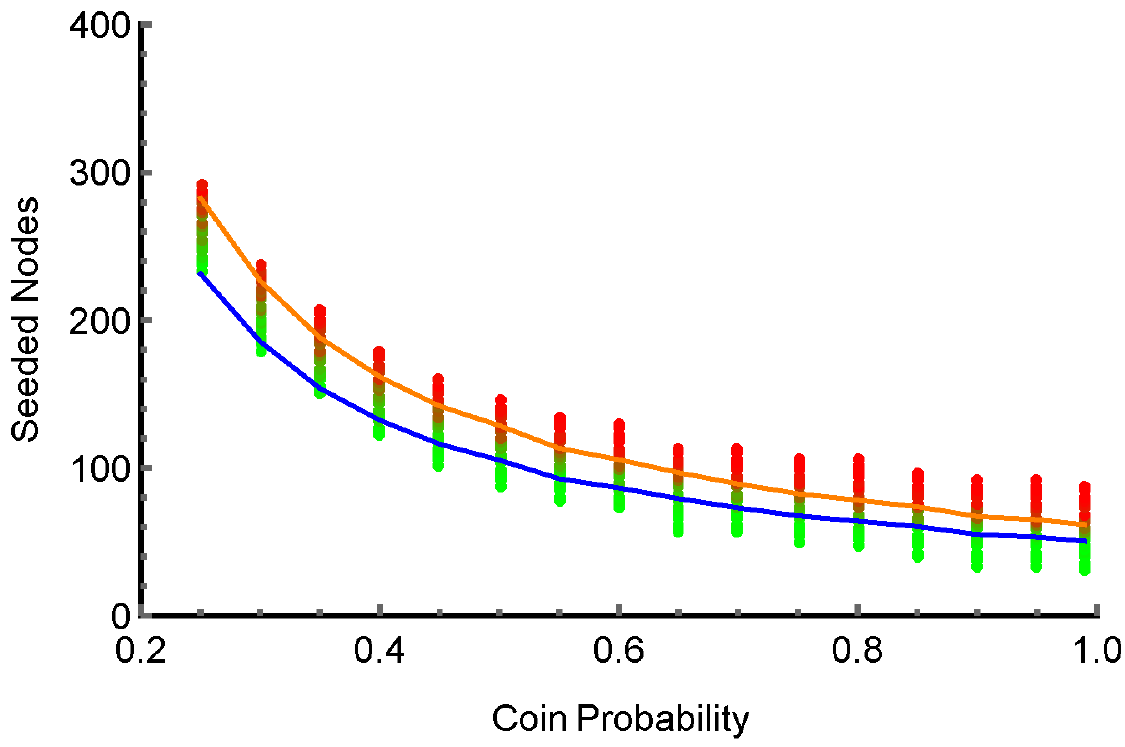}
\subcaption{$k=1$ case\label{fig-coina}}
\end{subfigure}
\begin{subfigure}[t]{0.5\textwidth}
\centering
\includegraphics[scale=0.45]{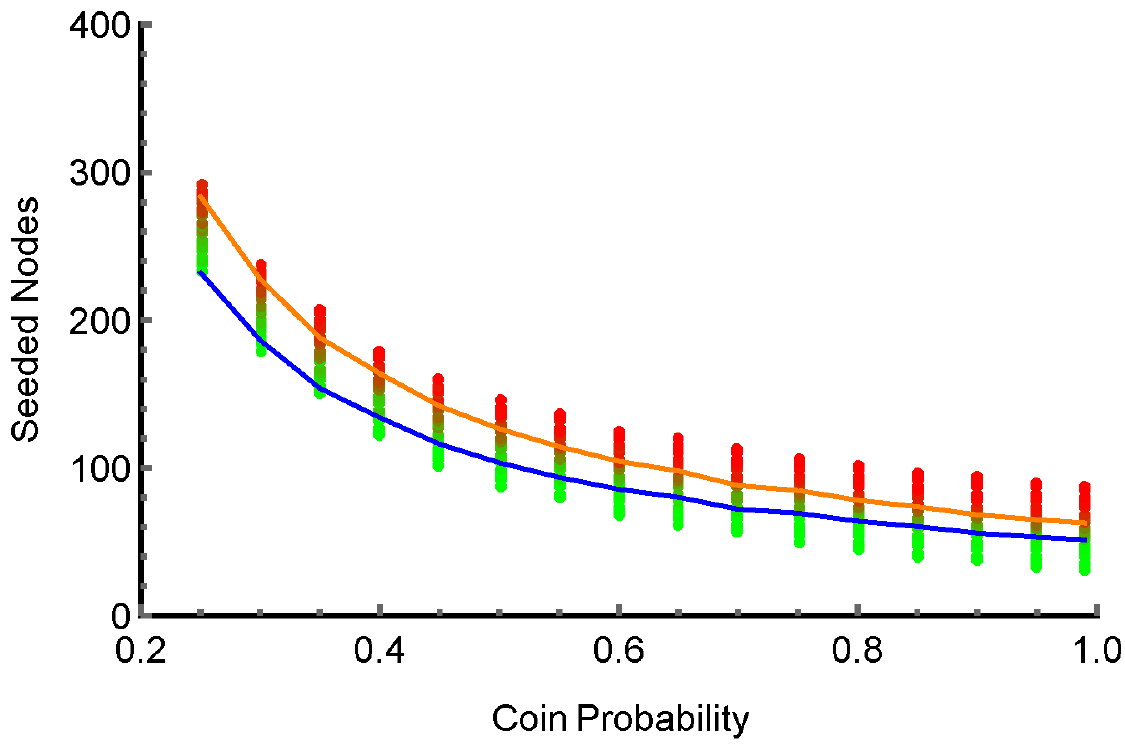}
\subcaption{Ring of $k=20$ clusters \label{fig-coinc}}
\end{subfigure}
\caption{\label{fig-coin}
Percolation with coinflips: For interpretability we considered
$s(u)=1$ and $z(u)=z$ for all vertices. We do not show the $k=10$ case which is similar to the above two. $\rmax$ was set to $20$. The
$x$ axis indicates the coin probability. 
}
\end{figure}

We consider unbalanced setups where the expected
degree of a node within a cluster is different from the expected
degree of the node outside, i.e., $pk_p \neq qk_q$, in
Figure~\ref{fig3}. In Figure~\ref{fig-cube} we consider the clusters
arranged as the vertices of a $3$-D cube where we have $k=8$ clusters
and $k_p=4$. In all cases the result is consistent with the prediction of Theorem~\ref{thm:mainthm}. 
Although the network structure did not affect the critical values and the percolation, as long as the expected degree was the same, changing the expected degree had a much greater impact. In Figure~\ref{fig2} we change the average degree parameter (and the threshold to be between $3$ and $4$)  --- as predicted, the effect is clearly seen on the critical value of percolation.

\begin{figure}[H]
\begin{subfigure}[b]{0.5\textwidth}
\centering
\includegraphics[scale=0.45]{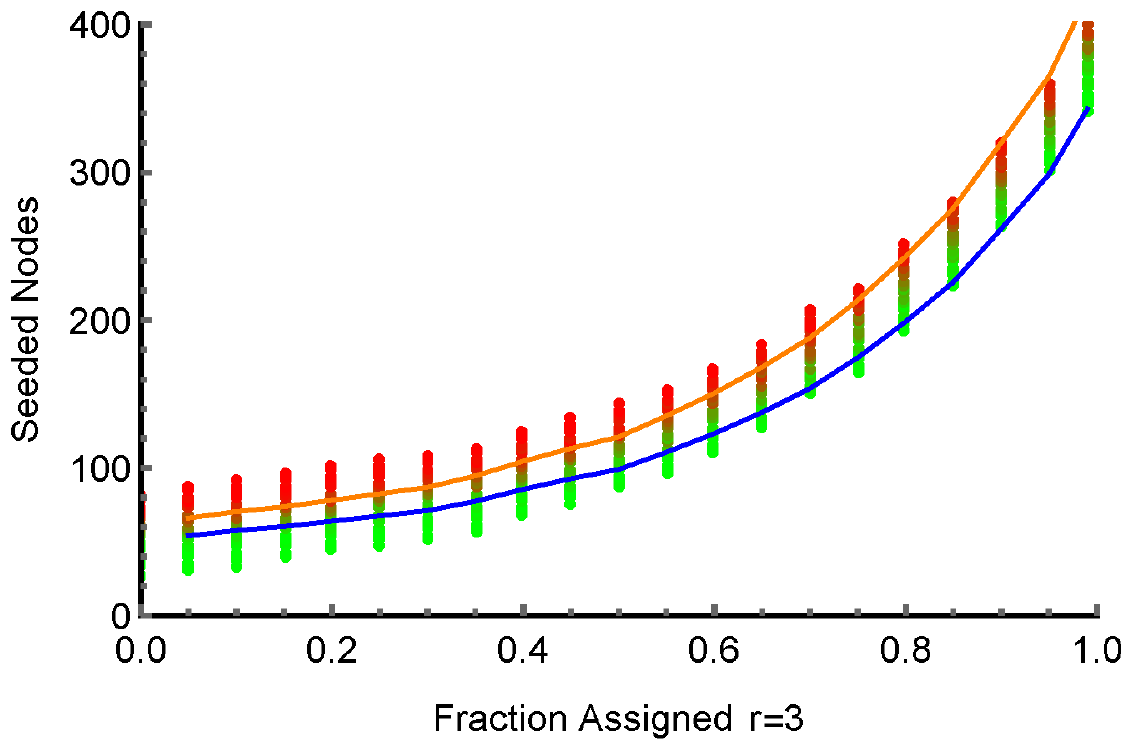}
\subcaption{\label{fig3} Ring of $k=10$ clusters, $p=70/(3\eta),q=30/(7\eta)$}
\end{subfigure}
\begin{subfigure}[b]{0.5\textwidth}
\centering
\includegraphics[scale=0.45]{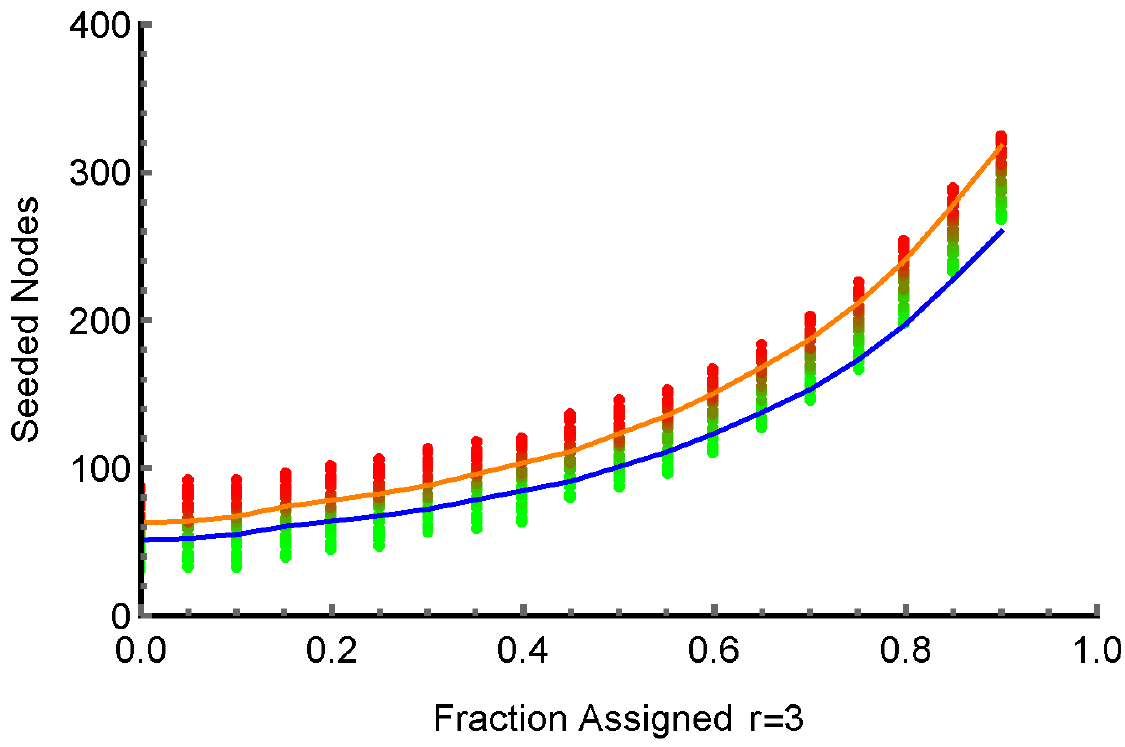}
\subcaption{\label{fig-cube} A cube connection network $k=8, k_p=4$.}
\end{subfigure}
\caption{Different topologies of graphs, Average degree is $10$ in both cases.
In \eqref{fig3} $p=70/(3n),q=30/(7n)$, whereas in \eqref{fig-cube} $p=15/n$ and $q=5/n$. }
\end{figure}

\begin{figure}[H]
\begin{subfigure}[t]{0.5\textwidth}
\centering
\includegraphics[scale=0.45]{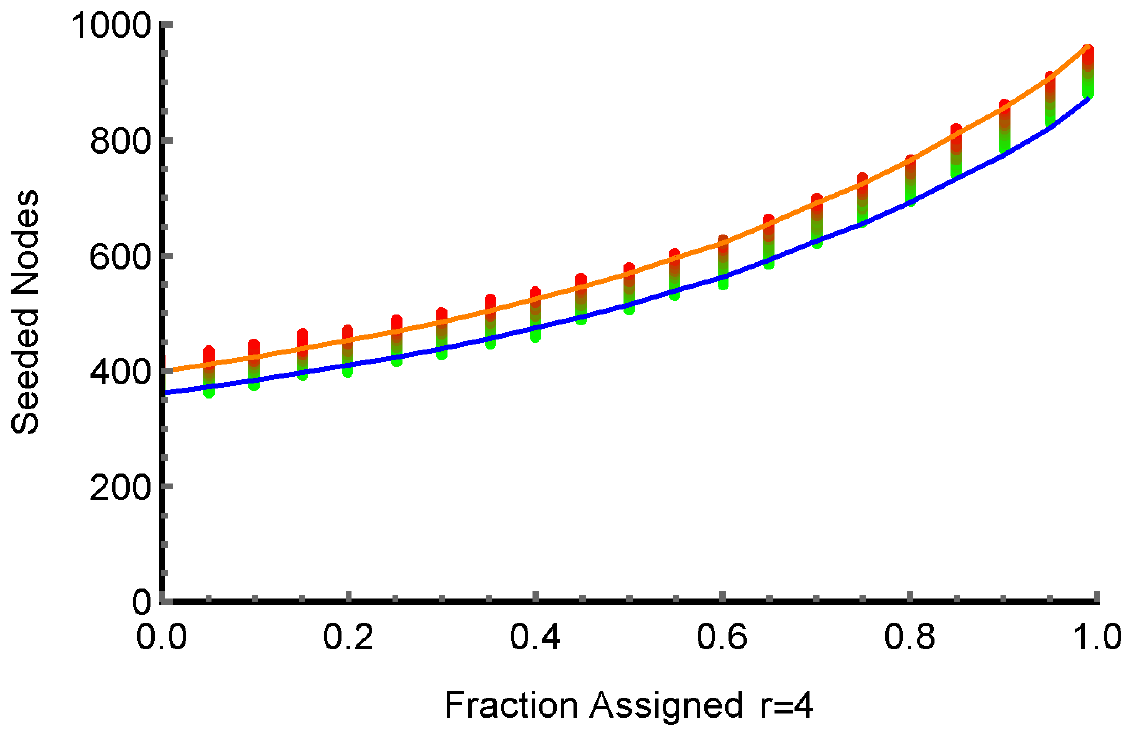}
\subcaption{Expected degree $10$\label{fig2a}}
\end{subfigure}
\begin{subfigure}[t]{0.5\textwidth}
\centering
\includegraphics[scale=0.45]{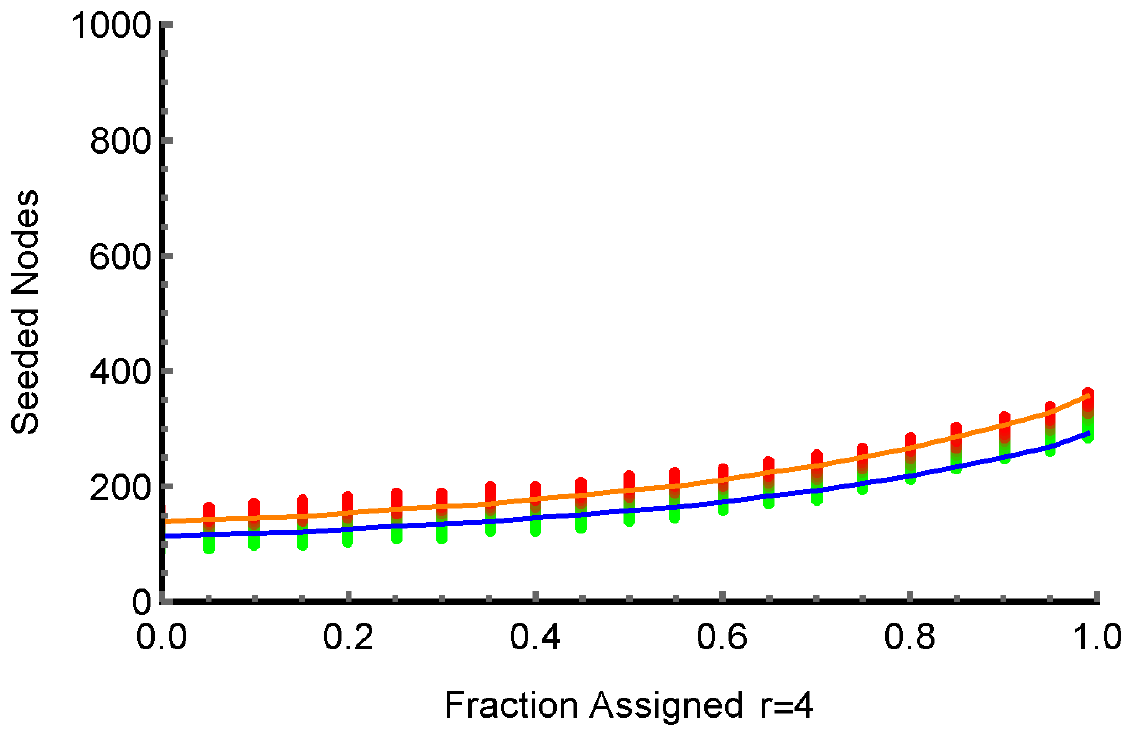}
\subcaption{\label{fig2b} Expected degree $20$}
\end{subfigure}
\caption{\label{fig2} Ring of $k=10$ clusters, thresholds between $3$ and $4$: we vary the total (expected) degree of the nodes. We kept the $y$-axis scale the same for comparison.}
\end{figure}

\paragraph{Interventions.} Figure~\ref{fig:bolsterk1} and Figure~\ref{fig:bolsterk10} show the results of \bolster\ . In both simulations, $n=10000$, $\lambda = .1$, $k=1$, $r=2$ uniformly. Figure~\ref{fig:bolsterk1} depicts an \er\ graph with $k=1$, $p=7/n$. Figure~\ref{fig:bolsterk10} depicts a ring graph with $k=10$, $p=4/3000$, $q=3/7000$. In both figures, green dots imply percolation stopped and red dots imply the spread was complete. The blue dots and line correspond to $1-\epsilon$ times the expected cutoff point, where $\epsilon=.1$. The black $x$s and line correspond to $1+\epsilon$ times the estimated cutoff.

\begin{figure}[H]
\begin{subfigure}[t]{0.5\textwidth}
\centering
\includegraphics[scale=0.45]{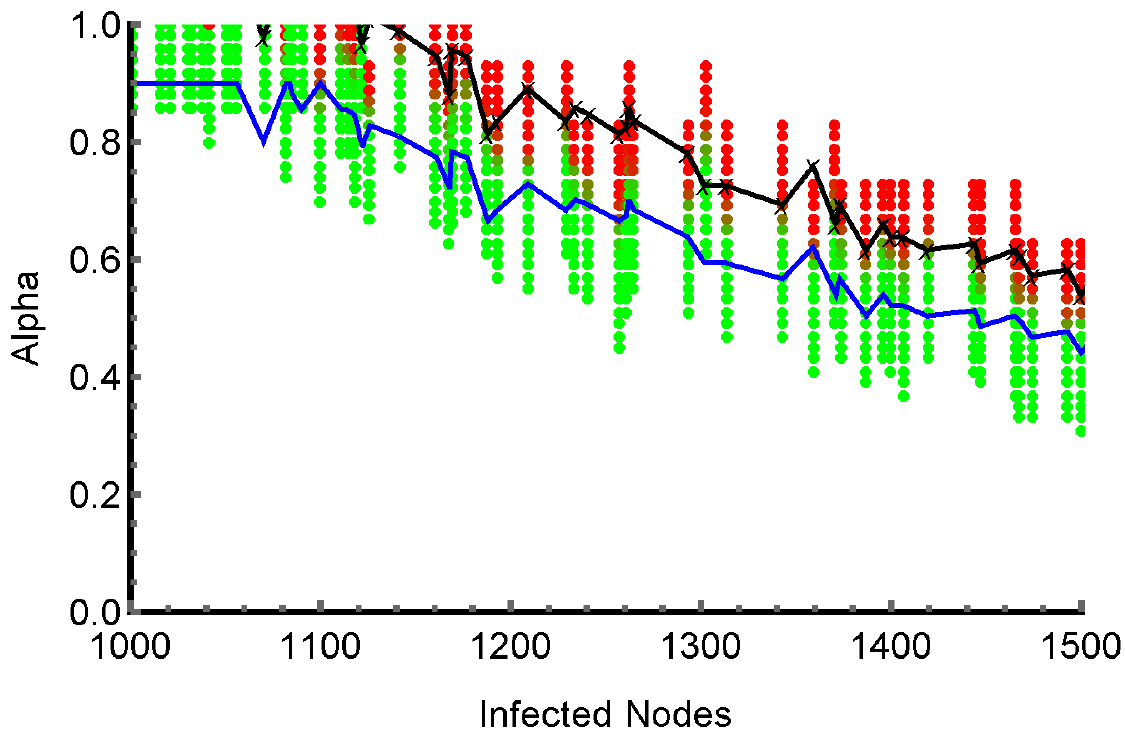}
\subcaption{\label{fig:bolsterak1}Bolster Strategy A, $k=1$}
\end{subfigure}
\begin{subfigure}[t]{0.5\textwidth}
\centering
\includegraphics[scale=0.45]{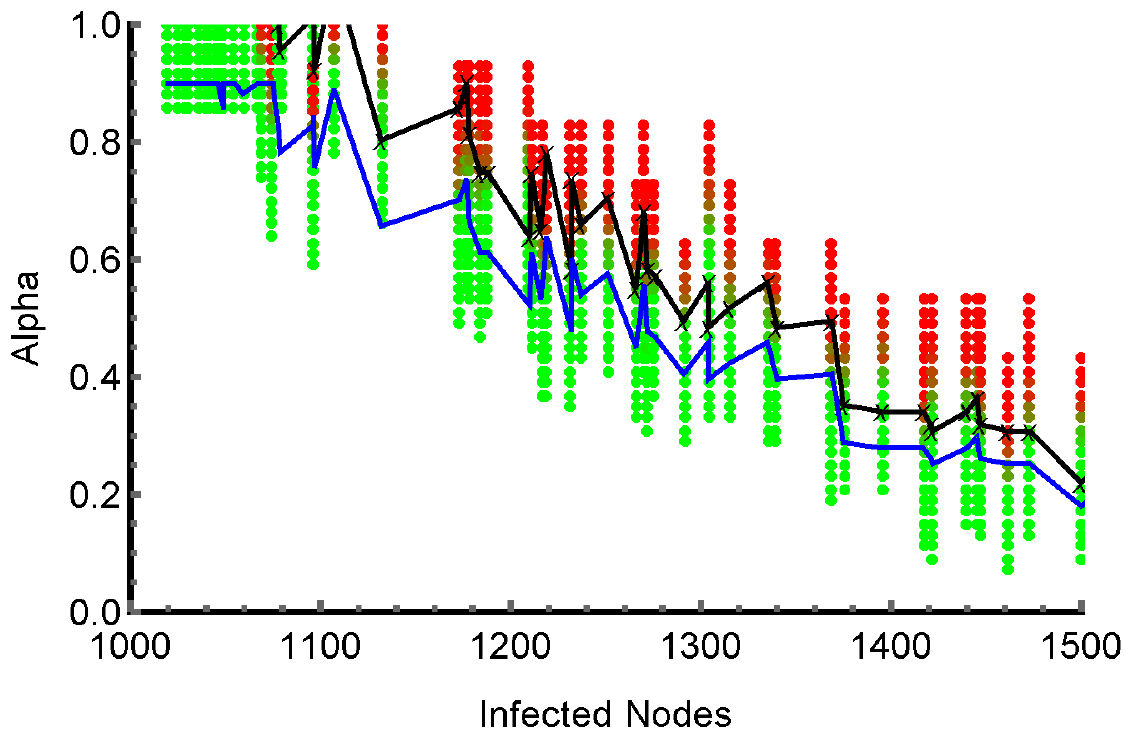}
\subcaption{\label{fig:bolsterbk1} Bolster Strategy B, $k=1$}
\end{subfigure}
\caption{\label{fig:bolsterk1} Graph is \er\ .  The $x$-axis corresponds to $\I(\tau)$ plus the number of just infected vertices. Each vertical line corresponds to a graph (50 such graphs) and for each value of $\alpha$ we show the average of $50$ trials. The blue dots correspond to the estimated lower bound with $\epsilon=.1$ and the $x$ corresponds to the estimated upper bound. Strategy A seems to be better amenable to estimation. Green dots imply that the percolation stopped and red implies that the spread was complete.}
\end{figure}

\begin{figure}[H]
\begin{subfigure}[t]{0.5\textwidth}
\centering
\includegraphics[scale=0.45]{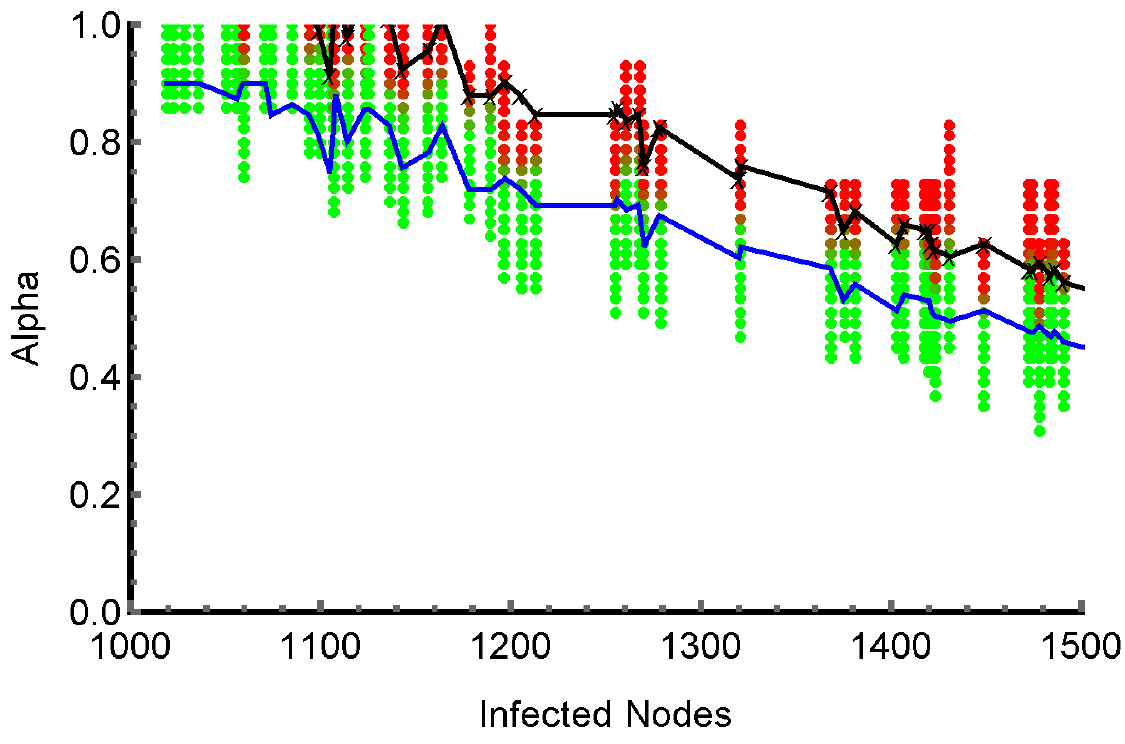}
\subcaption{\label{fig:bolsterak10}Bolster Strategy A, $k=10$}
\end{subfigure}
\begin{subfigure}[t]{0.5\textwidth}
\centering
\includegraphics[scale=0.45]{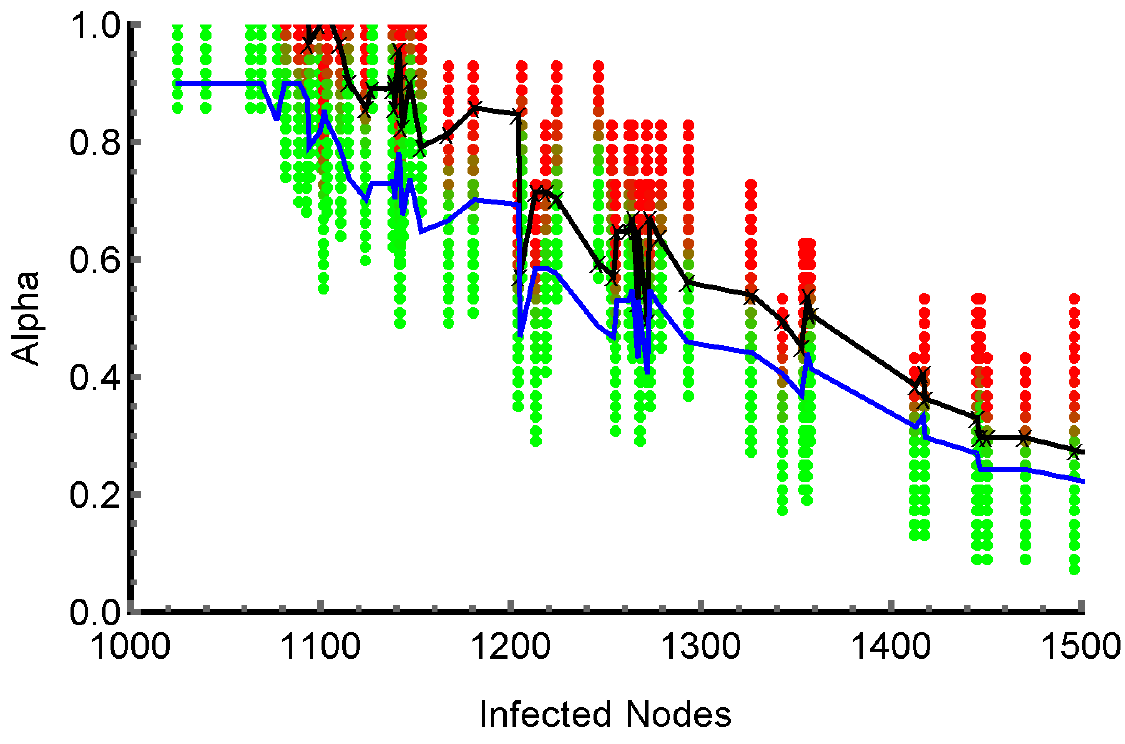}
\subcaption{\label{fig:bolsterbk10} Bolster Strategy B, $k=10$}
\end{subfigure}
\caption{\label{fig:bolsterk10} Graph is a ring of $k=10$ clusters. The $x$-axis and vertical lines have the same meaning as in Figure~\ref{fig:bolsterk1}.}
\end{figure}

We consider two possible \bolster\ interventions. For \bolstera\ (Figure~\ref{fig:bolsterak1} and \ref{fig:bolsterak10}), with probability $\alpha$ we increase the threshold by $1$ (to $3$) and with probability $1-\alpha$ we increase the threshold by $2$ (to $4$). For \bolsterb\ (Figure~\ref{fig:bolsterbk1} and \ref{fig:bolsterbk10}), with probability $1/2 + \alpha/2$ we increase the threshold by $1$ (to $3$) and with probability $1/2 - \alpha/2$ we increase the threshold by $2$ (to $5$). Note that for both interventions, the expected post-intervention threshold is $4-\alpha$ and that $\alpha=0$ corresponds to the strongest possible intervention.

The first interesting result is that \bolstera\  is substantially more powerful than \bolsterb\  (the intervention is successful with a higher value of $\alpha$). For example, having $50\%$ threshold-3 vertices and $50\%$ threshold-5 vertices is worse than having $100\%$ threshold-4 vertices; the vulnerable threshold-3 vertices become infected and then they infect the threshold-5 vertices.

The second interesting result is that for two graphs $G_1$ and $G_2$, there are times when $G_1$ has more infected nodes than $G_2$ but it is easier to stop the infection on $G_1$ than in $G_2$. This is because there are two factors that determine the effectiveness of the intervention: $\I(\tau)$ and $\I(\tau)-\I(\tau-1)$. Thus, instead of having a two-dimensional decision boundary, we have a three-dimensional boundary. We illustrate this boundary in Figure~\ref{fig:bolster3d}, which is the three-dimensional version of Figure~\ref{fig:bolsterak1} (\bolstera\ and $k=1$).

\begin{figure}[H]
\begin{subfigure}[t]{0.33\textwidth}
\centering
\includegraphics[scale=0.5]{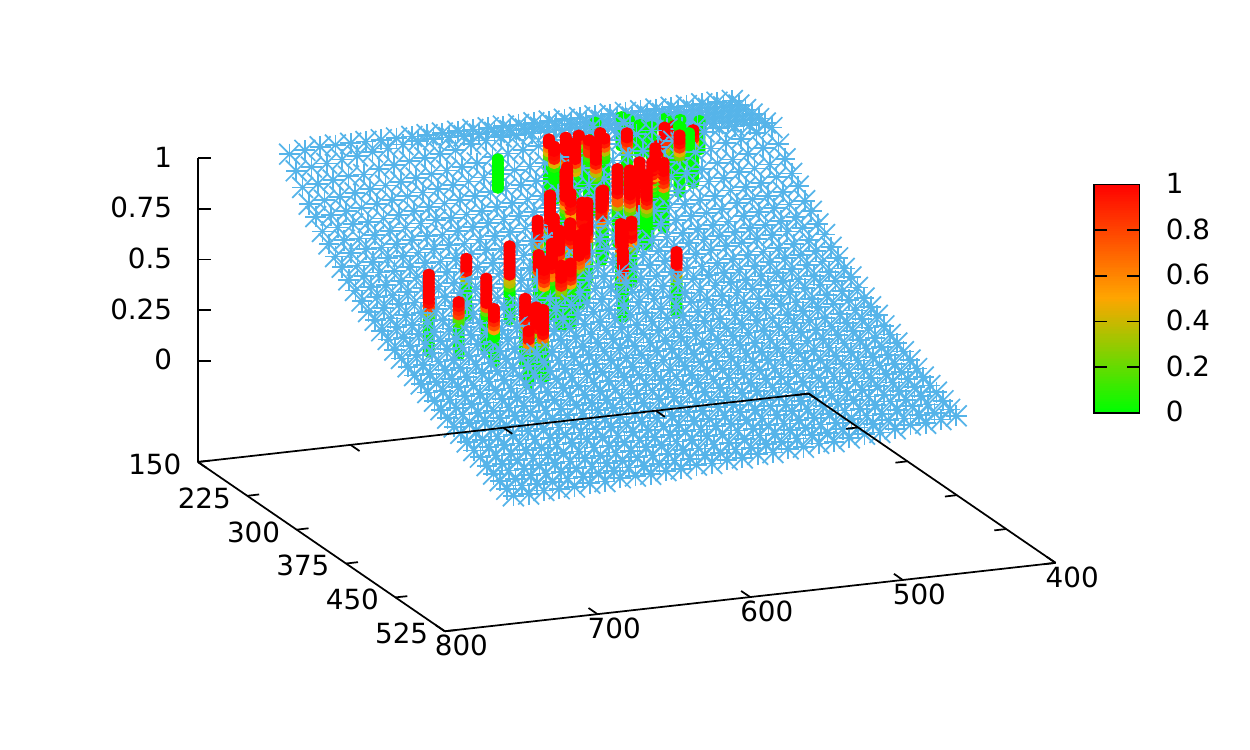}
\end{subfigure}
\begin{subfigure}[t]{0.33\textwidth}
\centering
\includegraphics[scale=0.5]{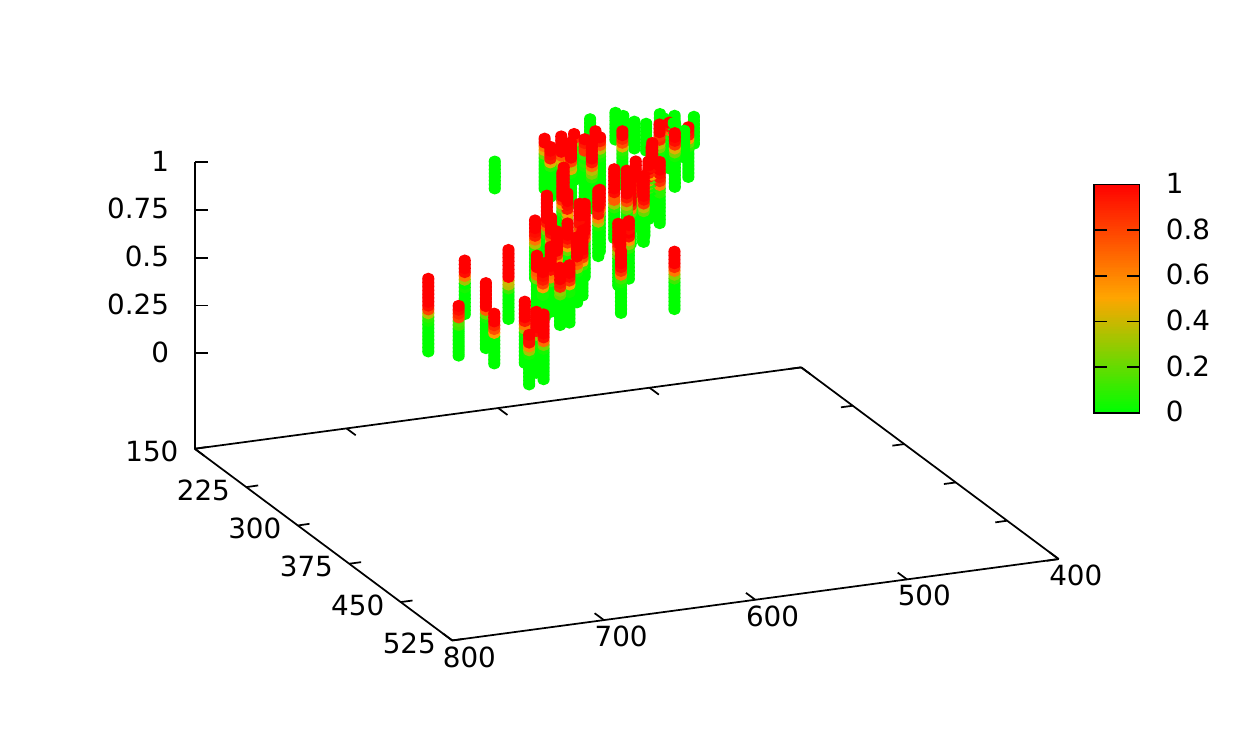}
\end{subfigure}
\begin{subfigure}[t]{0.33\textwidth}
\centering
\includegraphics[scale=0.5]{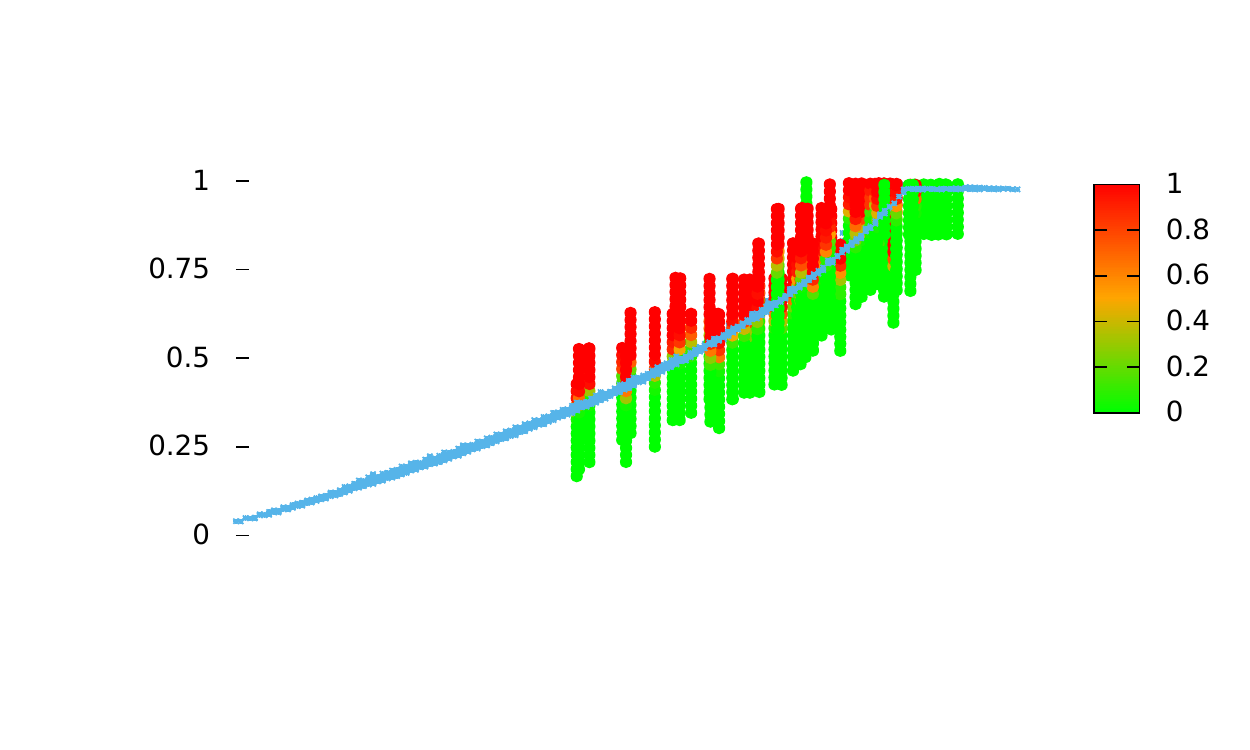}
\end{subfigure}
\caption{\label{fig:bolster3d} The three axes correspond to $\I(\tau)$ (ranging from $400$--$800$), $\I(\tau)-\I(\tau-1)$, and $\alpha$. Green dots imply percolation stopped and red dots imply the spread was complete. The blue surface corresponds to the theoretical transition surface, excluding the $1 \pm \epsilon$ factor for visual clarity. The perspective is chosen to show that the surface occludes the green points in the first two plots. The third plot shows that the surface separates the colors (with a few outliers, note we are showing the $\epsilon=0$ surface).}
\end{figure}

The final interesting result is that \bolsterb\ is substantially nosier than \bolstera\ . Pulling apart a single simulation reveals why. In Figure~\ref{fig:bolstercomparison}, we zoom in on a single graph and compare various hypothetical interventions. The x-axis corresponds to the value of $\alpha$, and black line corresponds to the hypothetical value of $\I(\tau)$ that would lead to the spread of infection given this value of $\alpha$. The actual value of $\I(\tau)$ along with simulated results is dpecited by the red-green line. When the black theoretical value is greater than the actual $I(\tau)$ , we expect the percolation to stop (and the result-line should be green). When the black theoretical value is less than $\I(\tau)$, we expect the percolation to spread (and the result-line should be red).

\begin{figure}[H]
\begin{subfigure}[t]{0.5\textwidth}
\centering
\includegraphics[scale=0.45]{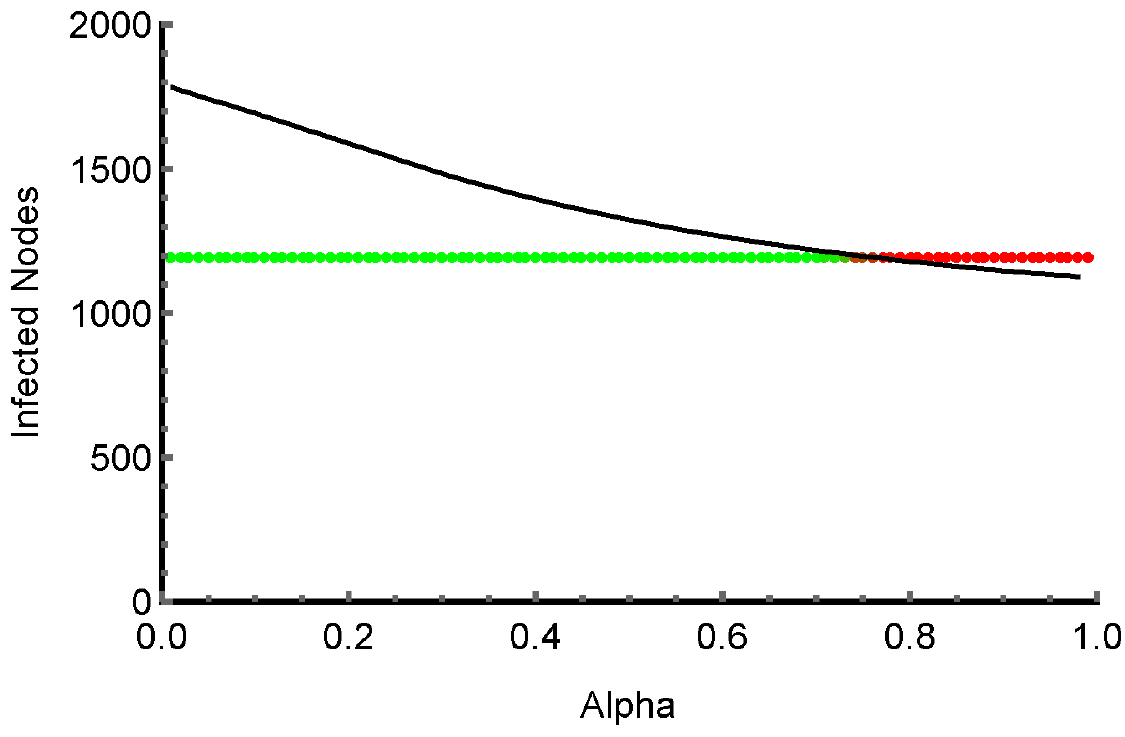}
\subcaption{Bolster Strategy A}
\end{subfigure}
\begin{subfigure}[t]{0.5\textwidth}
\centering
\includegraphics[scale=0.45]{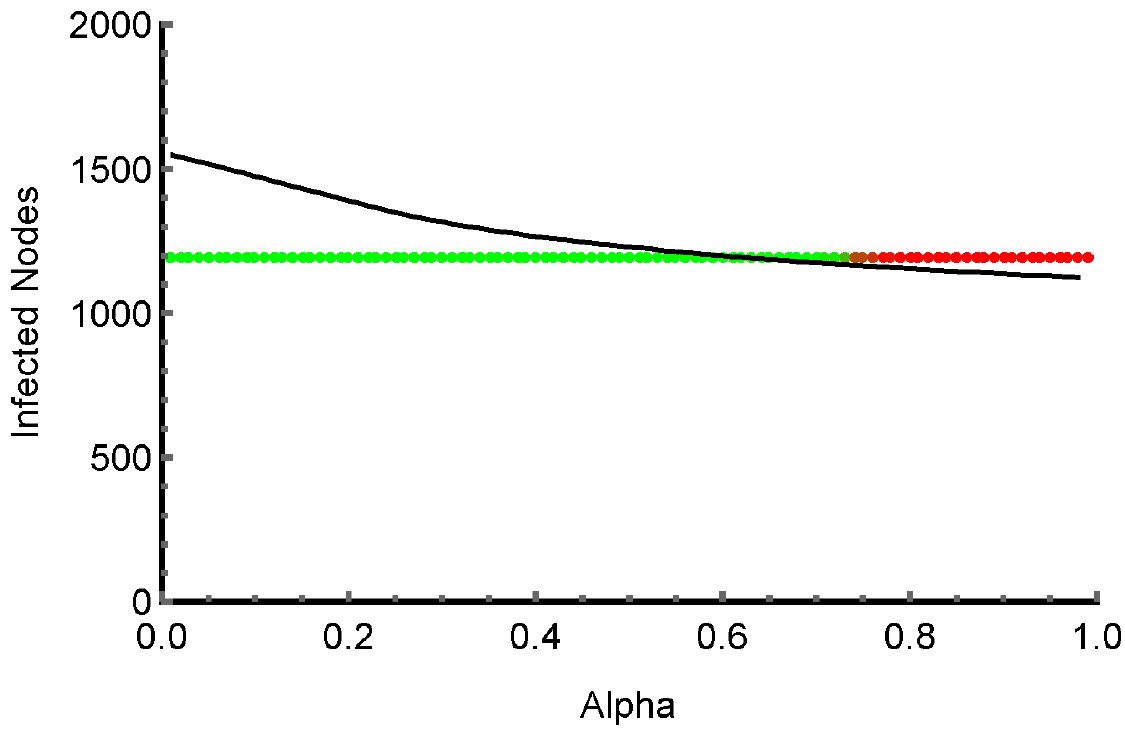}
\subcaption{Bolster Strategy B}
\end{subfigure}
\caption{\label{fig:bolstercomparison} Black line corresponds to hypothetical value of $\I(\tau)$ that would lead to spread of percolation. Red-green line corresponds to the actual value of $\I(\tau)$. Green dots imply percolation stopped and red dots imply the spread was complete. Notice that when the black value is greater than $\I(\tau)$, the percolation stops but when the black value is less than $\I(\tau)$, the percolation spreads.}
\end{figure}

Notice that the \bolstera\ theoretical line is substantially steeper than the \bolsterb\ line. This is the reason the \bolsterb\ intervention is so noisy, and also the reason why \bolstera\ is a stronger intervention than \bolsterb\ ; small changes in $\alpha$ dramatically improve the strength of the intervention.

We can perform the same analysis on \diminish\ and \sequester\ . Recall that \diminish\ deletes every edge with probability $1-\alpha$, whereas \sequester\ only deletes edges connected to an infected vertex. Figure~\ref{fig:dimseqk1} shows these results, using $p=15/n$ and $r=3$ uniformly.

\begin{figure}[H]
\begin{subfigure}[t]{0.5\textwidth}
\centering
\includegraphics[scale=0.45]{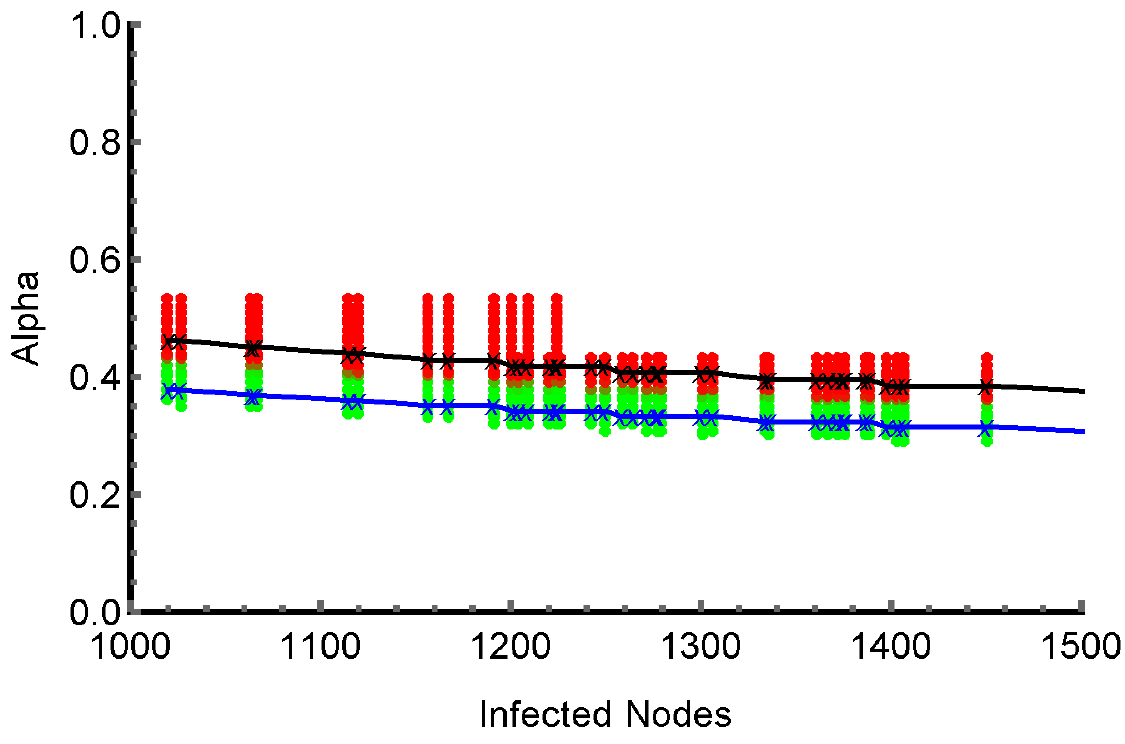}
\subcaption{\label{fig:diminish}\diminish}
\end{subfigure}
\begin{subfigure}[t]{0.5\textwidth}
\centering
\includegraphics[scale=0.45]{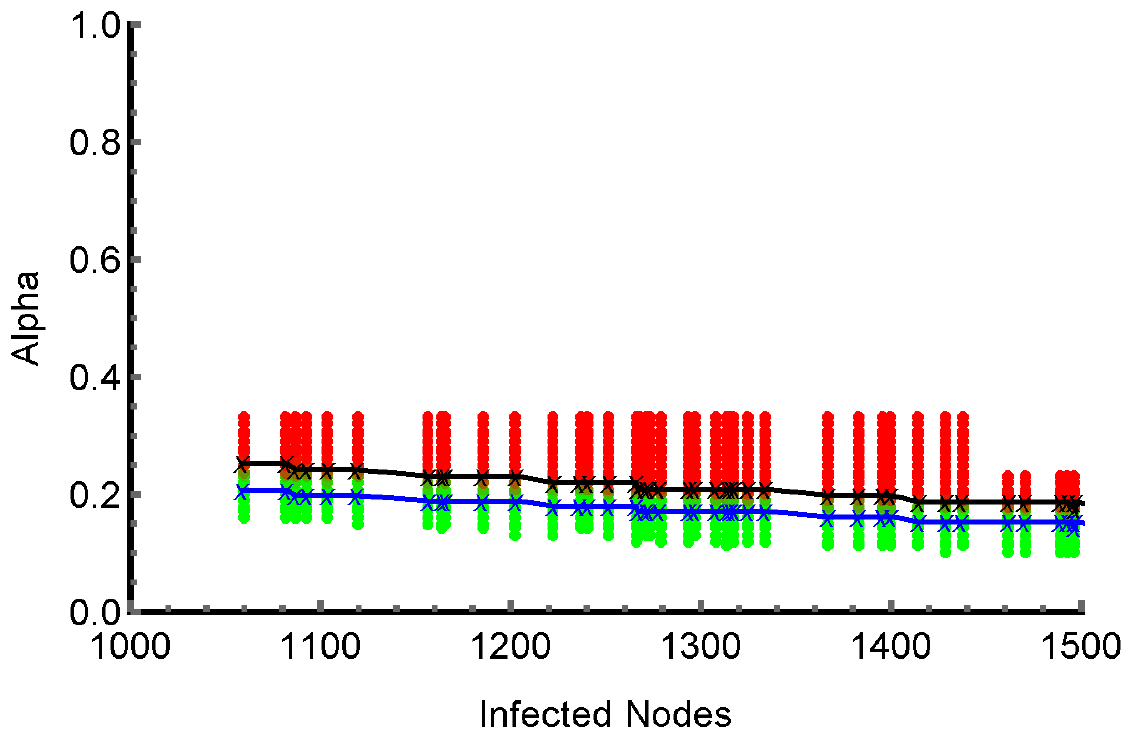}
\subcaption{\label{fig:sequester} \sequester}
\end{subfigure}
\caption{\label{fig:dimseqk1} The $x$-axis and vertical lines have the same meaning as in Figure~\ref{fig:bolsterk1}. Here the edges are dropped with probability $1-\alpha$.}
\end{figure}

Notice that \diminish\ is a substantially stronger intervention than \sequester\, which is expected as \diminish\ deletes all edges whereas \sequester\ only deletes a subset of the edges. The figure for $k=10$ is similar and we omit it for space. Instead, for $k=10$, $p=9/3000$ and $q=6/3000$, we will perform a similar analysis as Figure~\ref{fig:bolstercomparison} and zoom in on a single graph. When $\alpha_p = \alpha_q = \alpha$, we obtain Figure~\ref{fig:dimseqcompare}. 

\begin{figure}[H]
\begin{subfigure}[t]{0.5\textwidth}
\centering
\includegraphics[scale=0.45]{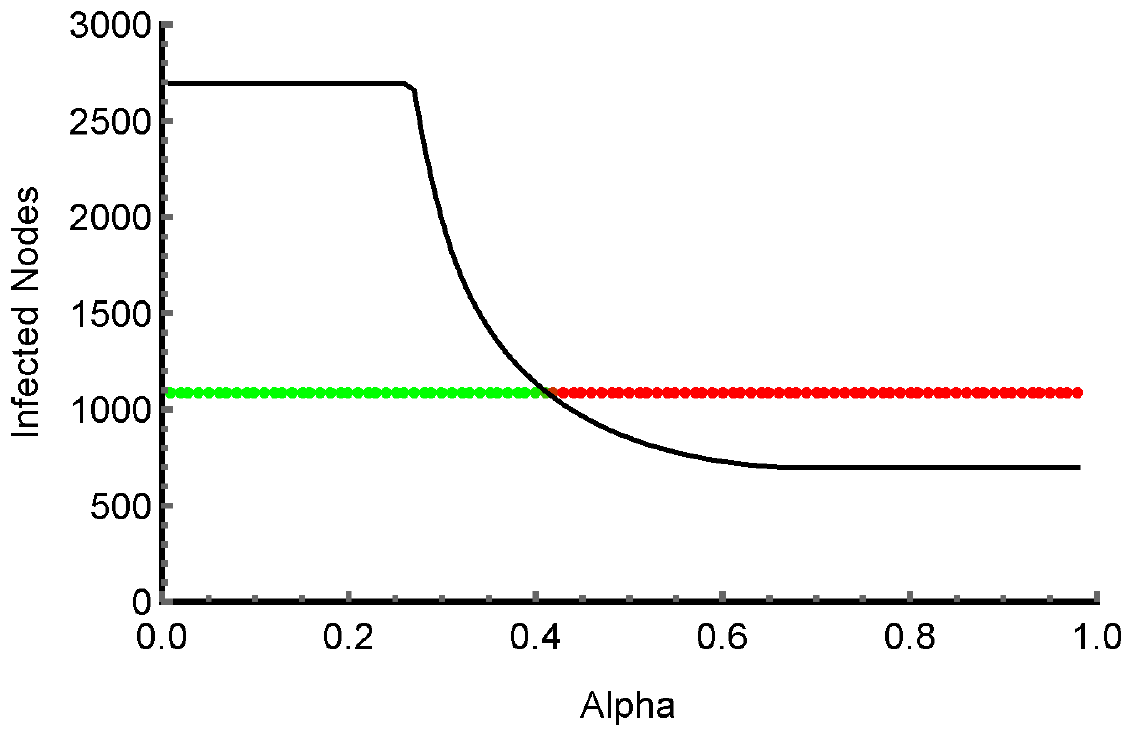}
\subcaption{Diminish}
\end{subfigure}
\begin{subfigure}[t]{0.5\textwidth}
\centering
\includegraphics[scale=0.45]{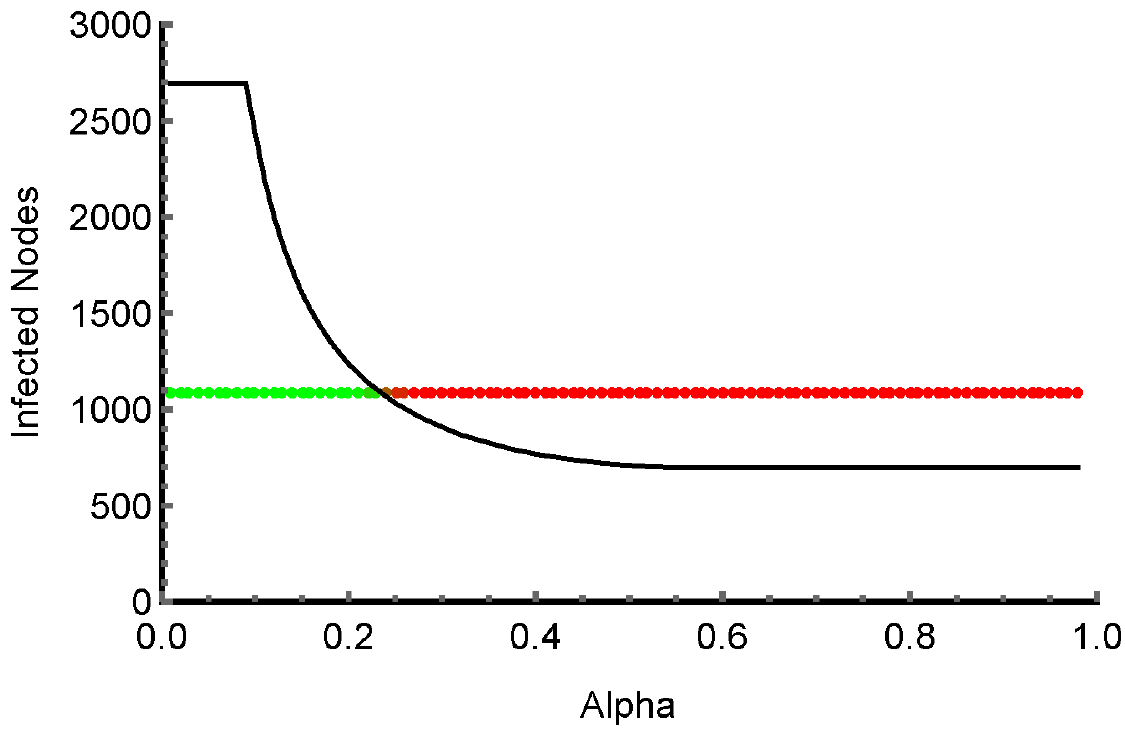}
\subcaption{Sequester}
\end{subfigure}
\caption{\label{fig:dimseqcompare} Edges are dropped with probability $1-\alpha_p$ or $1-\alpha_q$ where $\alpha_p = \alpha_q=\alpha$. The lines have the same meaning as Figure~\ref{fig:bolstercomparison} }
\end{figure}

Our results also hold for the case where $\alpha_p \neq \alpha_q$, which is depicted in Figure~\ref{fig:dimseqcomparediffalpha} (using the same graph as Figure~\ref{fig:dimseqcompare} for ease of comparison).

\begin{figure}[H]
\begin{subfigure}[t]{0.5\textwidth}
\centering
\includegraphics[scale=0.45]{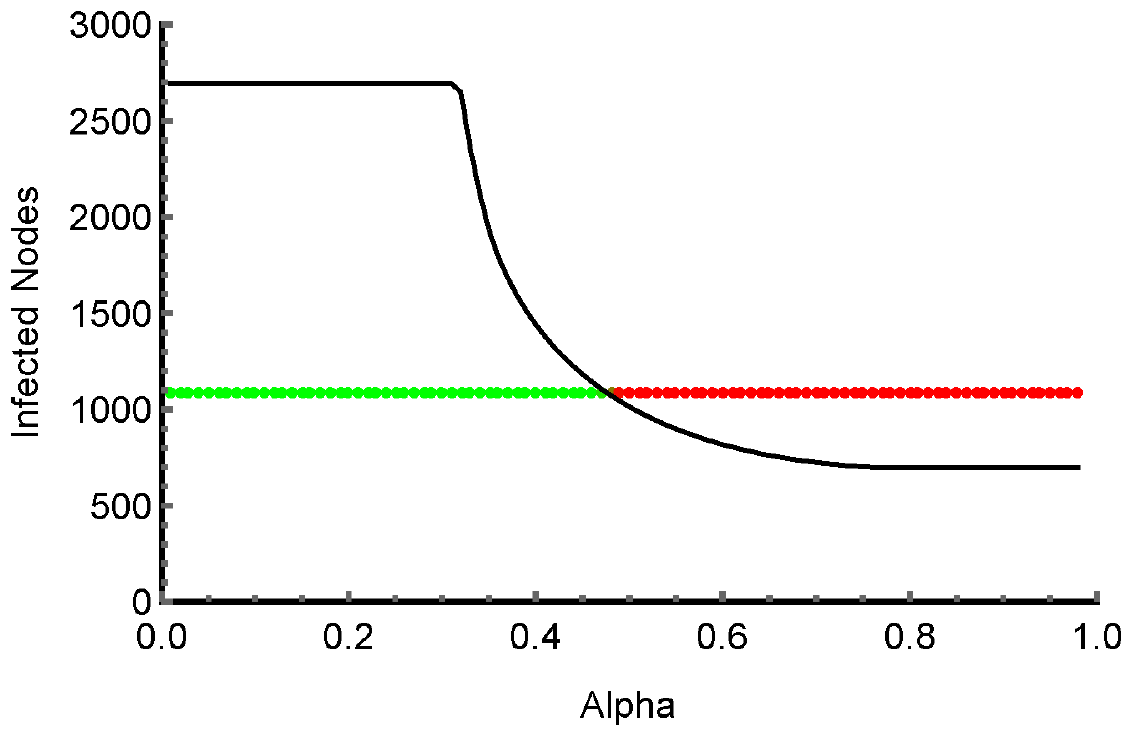}
\subcaption{Diminish}
\end{subfigure}
\begin{subfigure}[t]{0.5\textwidth}
\centering
\includegraphics[scale=0.45]{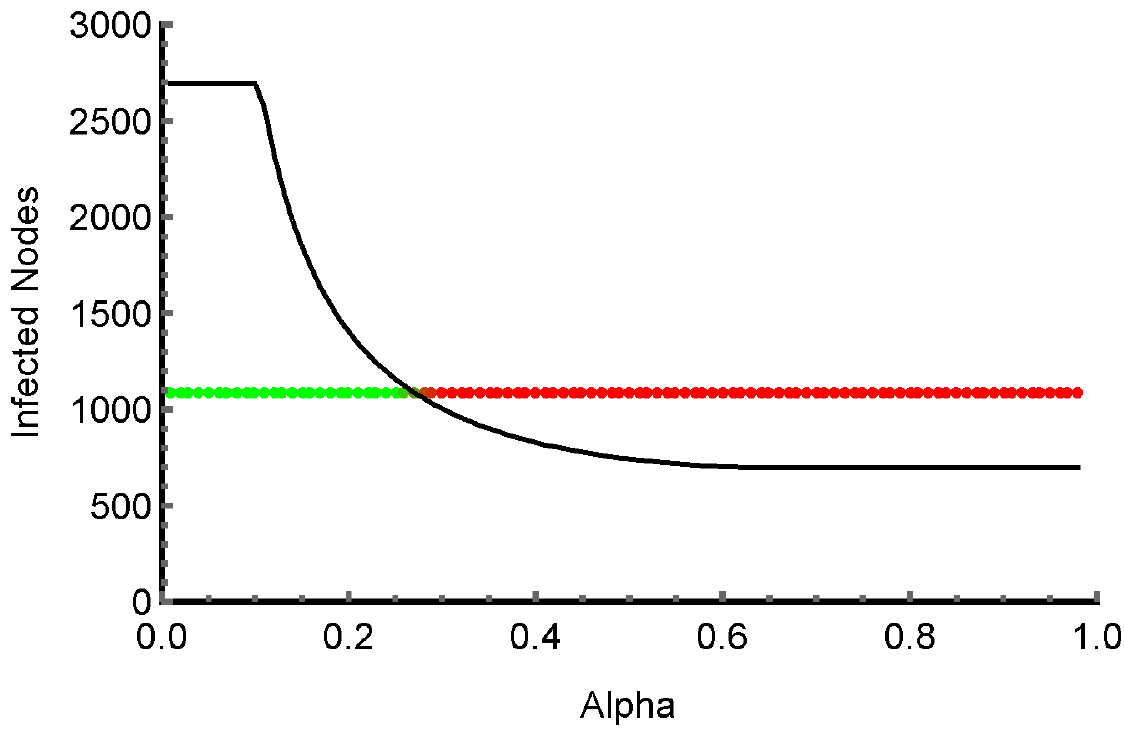}
\subcaption{Sequester}
\end{subfigure}
\caption{\label{fig:dimseqcomparediffalpha} Edges are dropped with probability $1-\alpha_p$ or $1-\alpha_q$ where $\alpha_p = \alpha$ and $\alpha_q = (2/3) \alpha$. The lines have the same meaning as Figure~\ref{fig:bolstercomparison}}
\end{figure}

\paragraph{Summary.} We can construct various intervention strategies and accurately predict whether the percolation will halt or spread. We can also compare various intervention strategies to determine which strategy is more effective.

\section{Proof of Theorem~1}
\label{sec:percolationthresholds}

\label{subsec:easier}
Recall the definition of Templated Multisection graphs in
Definition~\ref{def:tmgraph} in Page \pageref{def:tmgraph}.

\begin{definition}
\label{def:maindef}
Let $\phi=p k_p + q k_p$ and 
$\pi_r(t) =  \Pr[ \bin(k_p t, p) + \bin(k_q t, q) \geq r]$
\begin{eqnarray*}
A(t) = \sum_{r=1}^{\rmax} \zeta_r \pi_r(t)& \qquad \qquad \qquad & f(\seed,t) = (n - \seed)A(t) - kt + \seed \\
\tc(\seed) = \argmin_{t \leq 1/(3\phi)} f(\seed,t)& &\pthreshold = \min_\seed \{ \seed | \forall \ t \leq 1/(3\phi),  f(\seed,t) \geq 0 \} 
\end{eqnarray*}
Note $\tc=\tc(\pthreshold)$. Observe that $\eta\phi$ is the expected degree.
\end{definition}

Theorem~\ref{thm:mainthm} follows from Theorem~\ref{thm:convexity} and
Theorem~\ref{thm:percolationthreshold}. Theorem~\ref{thm:convexity}
proves the existence of $\pthreshold$ and
Theorem~\ref{thm:percolationthreshold} shows that this seed value
shows the desired sharp dichotomy.

\begin{theorem}[Proved in Section~\ref{proof:thm:convexity}]
\label{thm:convexity} 
If $\zeta_1 < 2 \zeta_2/3$, $p,q\leq 1/2$ and $\phi t \leq 1/3$ then
$A(t)$ is convex. If $\seed_ 1 < \seed_2$ then 
$\tc(\seed_1) \leq \tc(\seed_2)$. 
Moreover if for some constant $\beta>0$ we have
$\zeta_1 \eta\phi \leq 1-\beta$ and $\eta \phi = o(\sqrt{\beta n/k})$ then 
$\tc \geq \frac{\beta n}{2k(\phi \eta)^2} \rightarrow \infty$ as $n\rightarrow \infty$.
\end{theorem}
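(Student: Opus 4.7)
The plan is to handle the three claims using the explicit formula $\pi_r(t) = \Pr[\po(\phi t) \geq r] = 1 - e^{-\phi t}\sum_{j=0}^{r-1}(\phi t)^j/j!$, i.e.\ the Poisson convention that is legitimate here because $p,q\ll 1$ and the sum of two binomials with small success probabilities is well-approximated by a single Poisson of rate $\phi t$. Direct differentiation yields the clean identities $\pi_r'(t) = \phi e^{-\phi t}(\phi t)^{r-1}/(r-1)!$ and, for $r\ge 2$, $\pi_r''(t) = \phi^2 e^{-\phi t}(\phi t)^{r-2}\bigl[(r-1)-\phi t\bigr]/(r-1)!$, with $\pi_1''(t) = -\phi^2 e^{-\phi t}$ computed directly. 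In particular $A'(0)=\phi\zeta_1$ and $A'$ is an explicit smooth function of $\phi t$.

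For convexity, note that $\pi_r''(t)\ge 0$ whenever $\phi t\le r-1$, so in the regime $\phi t\le 1/3$ every term with $r\ge 2$ is nonnegative and only $\pi_1''(t) = -\phi^2 e^{-\phi t}$ contributes negatively. Keeping just the $r=1,2$ terms,
\[
A''(t) \;\geq\; \phi^2 e^{-\phi t}\bigl[-\zeta_1 + \zeta_2(1-\phi t)\bigr] \;\geq\; \phi^2 e^{-\phi t}\bigl[-\zeta_1 + 2\zeta_2/3\bigr]\;>\;0
\]
by the hypothesis $\zeta_1 < 2\zeta_2/3$. For monotonicity, $t\mapsto f(\seed,t)$ is convex in $t$, so any interior minimizer satisfies $A'(\tc(\seed)) = k/(n-\seed)$; since $A'$ is strictly increasing by Part~1 and $k/(n-\seed)$ is strictly increasing in $\seed$, $\tc(\seed)$ is nondecreasing. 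The assumption $\phi\zeta_1\eta\le 1-\beta$ gives $\partial_t f(\seed,0) = (n-\seed)\phi\zeta_1 - k < 0$ for all $\seed\le n$, so $\tc(\seed)>0$ always; the boundary case $\tc=1/(3\phi)$ is trivially compatible with monotonicity.

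For the lower bound on $\tc = \tc(\pthreshold)$, the strategy is to show $\partial_t f(\pthreshold,t) < 0$ for every $t < T := \beta/(2\phi^2\eta) = \beta n/(2k(\phi\eta)^2)$, which forces the minimizer to satisfy $\tc>T$. Since $\pthreshold\le n$, it suffices to show $n\,A'(t)<k$, and I would bound
\[
A'(t)/\phi \;\leq\; \zeta_1 \;+\; (\phi t)\sum_{r\geq 2}\zeta_r\frac{(\phi t)^{r-2}}{(r-1)!} \;\leq\; \zeta_1 + (1-\zeta_1)\phi t,
\]
using $(\phi t)^{r-2}/(r-1)!\le 1$ for $\phi t\le 1/3$ together with $\sum_{r\ge 2}\zeta_r = 1-\zeta_1$. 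Then $nA'(t) \le n\phi\zeta_1 + n\phi^2 t \le k(1-\beta) + n\phi^2 t$, and for $t\le T$ the second summand is at most $k\beta/2$, so $nA'(t) \le k(1-\beta/2) < k$. Finally, $\eta\phi = o(\sqrt{\beta\eta})$ yields $\phi^2\eta = o(\beta)$, hence $T\to\infty$. The main obstacle is the delicate matching between the convexity window $\phi t\le 1/3$ and the threshold $\zeta_1 < 2\zeta_2/3$: the only term that can offset the concavity from $r=1$ is the $r=2$ contribution $\zeta_2(1-\phi t)$, and the argument just barely closes because $(1-\phi t)\ge 2/3$ exactly matches the constant $2/3$ in the hypothesis; independently loosening either the $1/3$ or the $2/3$ would break convexity near $\phi t = 1/3$.
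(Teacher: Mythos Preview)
Your argument has a genuine gap at its foundation: you treat the Poisson substitution $\pi_r(t) = \Pr[\po(\phi t)\ge r]$ as an equality, but the paper's $\pi_r(t)$ is $\Pr[\bin(k_p t,p) + \bin(k_q t,q)\ge r]$, and the theorem's hypotheses are only $p,q\le 1/2$, not $p,q\to 0$. Even in a regime where the approximation is accurate in total variation, convexity of an approximation does not transfer to convexity of the original---the sign of the second derivative is not stable under $o(1)$ perturbations. Everything downstream (the first-order condition for $\tc(\seed)$, the bound $nA'(t)<k$) then inherits this defect, since you are differentiating the wrong function.

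The paper handles the two issues differently. For convexity (its Theorem~\ref{thm:basic}) it works directly with the binomial expressions: writing $B_i(t)=\Pr[\bin(k_pt,p)=i]$, $C_j(t)=\Pr[\bin(k_qt,q)=j]$ and $D_r=\sum_i B_iC_{r-i}$, it computes $B_i''$, $C_j''$ and the mixed second derivatives $\frac{d^2}{dt^2}(B_iC_j)$ explicitly and checks each is nonnegative for $r\ge 2$ whenever $\phi t\le 1/3$. The concave $r=1$ term is then absorbed by the $r=2$ term through the same mechanism you identified---$(1-\phi t)\ge 2/3$ matched against $\zeta_1<2\zeta_2/3$---but for the exact $\underline A(t)=\zeta_1\pi_1(t)+\zeta_2\pi_2(t)$, with the lower-order $p^2,q^2$ corrections tracked.

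For the lower bound on $\tc$ (its Theorem~\ref{thm:increase}) the paper \emph{does} pass to a Poisson-type expression, but only as a one-sided bound: Le~Cam's theorem gives $A(t)\le \bar A(t):=1-e^{-\phi t}-(1-\zeta_1)\phi t\,e^{-\phi t}+2\phi^2 t$, and a convex-comparison argument (the tangent-of-slope-$k$ picture) shows $\tc(\pthreshold)\ge t'(\pthreshold')$, where $t'$ is the minimizer associated to the dominating $\bar A$ and $\pthreshold'\le\pthreshold$. Your derivative inequality $nA'(t)<k$ is morally the same computation and is in fact cleaner, but as written it bounds the Poisson $A'$; to make it rigorous you would need either the Le~Cam error term or a separate upper bound on the true binomial-based $A'(t)$.
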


\begin{theorem}
Let $\delta,\epsilon>0$.
Let $G$ be a $TM$ graph with sufficiently large number of nodes 
$n \geq n_0(\delta,\beta,\epsilon,k)$. Suppose we choose $\seed$ vertices uniformly at random and
set them as infected. If $\seed < (1-\epsilon)\pthreshold$ then $G$ 
does not become becomes infected with probability at least $1-O(\epsilon^{-2}/(\tc\beta^{2(\rmax+1)}))$.
If $\seed > (1+\epsilon)\pthreshold$ then an absolute constant fraction of 
the nodes in $G$ become infected with probability at least $1-O(\epsilon^{-2}/(\tc\beta^{2(\rmax+1)}))-O(1/\tc)$.
Moreover if the 
expected degree $\phi \eta$ is a slowly growing function then with 
probability $1-O(\epsilon^{-2}/(\tc\beta^{2(\rmax+1)})) - O(1/\tc) - 1/\eta^{\Omega(1)} = 1-O(\epsilon^{-2}/(\tc\beta^{2(\rmax+1)}))
$, the percolation does not stop till $\eta -o(\eta)$ nodes are infected.
\label{thm:percolationthreshold}
\end{theorem}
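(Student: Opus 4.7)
The plan is to follow the exploration-process-plus-martingale template of Janson et al.\ \cite{gnp} and extend it to non-uniform thresholds on the templated multisection graph. First, set up the exploration process: process infected-but-unexamined vertices one at a time and reveal their edges, normalising so that ``time $t$'' corresponds to $kt$ processed vertices in total, distributed roughly evenly across clusters (for example by round-robin). Let $S_t$ denote the number of currently infected but unprocessed vertices; the percolation halts precisely when $S_t=0$. A non-seed vertex of threshold $r$ is infected by time $t$ exactly when the count $\bin(k_p t,p)+\bin(k_q t,q)$ of revealed infected neighbours reaches $r$, so $\E[S_t]=f(\seed,t)$ by the definition of $A(t)$.

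Second, establish concentration of $S_t$ around $f(\seed,t)$. Decompose $S_t - \seed + kt = \sum_{r=1}^{\rmax} N_r(t)$, where $N_r(t)$ counts newly-infected threshold-$r$ vertices; build a Doob martingale $M_r(t)$ for each $N_r(t)$ relative to the filtration generated by the edge revelations, and sum them (martingales are closed under addition). Apply a Chebyshev/Azuma bound to $\sum_r M_r(t)$. The main obstacle, and where the proof most deviates from Janson et al., lies in bounding the martingale step sizes uniformly across thresholds: a single edge revelation changes each $\pi_r$ by a state-dependent amount, and the blind Poisson approximation of $A(t)$ used in \cite{gnp} is too coarse to control all thresholds simultaneously, so one must keep the $\bin$ structure and argue termwise. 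The hypotheses $\zeta_1<2\zeta_2/3$ and $\zeta_1\eta\phi\leq 1-\beta$ are precisely what damp the influence of easily-infected threshold-$1$ vertices so that each $|M_r(t)-M_r(t-1)|$ is bounded by $\poly(\rmax,1/\beta)$; a Chebyshev-style bound at deviation scale $\delta\sim\epsilon\,\tc$ then produces a failure probability of order $\epsilon^{-2}/(\tc\,\beta^{2(\rmax+1)})$, matching the claim.

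Third, deduce the two dichotomy directions from this concentration. If $\seed<(1-\epsilon)\pthreshold$, then by convexity of $A$ (Theorem~\ref{thm:convexity}) together with the definition of $\pthreshold$ and the monotonicity $\tc(\seed_1)\leq\tc(\seed_2)$, the minimum $\min_{t\leq 1/(3\phi)}f(\seed,t)$ is at most $-\delta$ for some $\delta\sim \epsilon\,\tc$; here the growth $\tc\geq \beta n/(2k(\phi\eta)^2)\to\infty$, also furnished by Theorem~\ref{thm:convexity}, is used to guarantee enough slack to overcome the concentration error. Concentration then forces $S_{t^*}<0$ at the minimiser, so the process has already halted, and at most $k t^*\leq n/(3\phi\eta)$ vertices are infected, which is bounded away from $n$ (and is $o(n)$ whenever $\phi\eta\to\infty$). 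Symmetrically, if $\seed>(1+\epsilon)\pthreshold$ then $f(\seed,t)\geq\delta>0$ throughout $[0,1/(3\phi)]$, so $S_t>0$ with high probability over that whole window and the process survives at least until $t=1/(3\phi)$, producing the claimed constant fraction of infected vertices. For the strengthening to $\eta-o(\eta)$ per cluster under slowly growing expected degree, a second-stage cascade completes the argument: once a constant density of infected vertices has been established, each remaining healthy vertex has $\omega(1)$ infected neighbours in expectation, so a Chernoff bound on the complement shows that only $o(\eta)$ vertices per cluster fail to exceed their bounded thresholds, absorbing the additional $1/\eta^{\Omega(1)}$ term in the failure probability.
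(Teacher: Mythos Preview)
Your high-level architecture matches the paper: an exploration process, concentration of $S(t)$ around $f(\seed,t)$ via martingales, and a two-sided argument using convexity of $A$. But the concentration step, which is where all the work lies, is misdescribed in several places that matter.

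First, the martingale is not a Doob martingale with respect to edge revelations. The paper (following \cite{gnp}) builds, for each cluster $i$ and threshold $r$, the pair
\[
\xi(t)=\frac{S^i_r(t)-\E[S^i_r(t)]}{1-\pi_r(t)},\qquad
\xi_{rev}(t)=\frac{S^i_r(t)-\E[S^i_r(t)]}{\pi_r(t)},
\]
which are a martingale and a reverse martingale \emph{in the exploration time $t$}. The normalisations are essential; the raw process $S^i_r(t)-\E[S^i_r(t)]$ is not a martingale. These are then controlled by Doob's $L^2$ maximal inequality, which is what delivers $\sup_{t\le t_0}$ control in one shot. Your proposed Azuma/step-size route gives concentration at a fixed $t$, not uniformly over the whole window, and a union bound over $t$ would cost you the stated probability.

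Second, your explanation of where the $\beta^{2(\rmax+1)}$ factor comes from is off. It does not arise from bounding martingale increments. Doob's inequality gives $\E[\sup_{t\le t_0}|S^i_r(t)-\E S^i_r(t)|^2]\le 16\eta\zeta_r\pi_r(t_0)$; the $\beta$-powers enter because in the supercritical case one must run the concentration out to $t_0=\Theta(\beta^{-2})\tc$ to escape the bottleneck, and Lemma~\ref{lem:pirlemma} then gives $\pi_r(t_0)\le c\,\beta^{-2r}\pi_r(\tc)$. Summing over $r\le\rmax$ and applying Chebyshev at scale $\gamma\tc$ is what produces $\beta^{-2\rmax}$ in the failure probability. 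Relatedly, the hypotheses $\zeta_1<2\zeta_2/3$ and $\zeta_1\eta\phi\le 1-\beta$ are not used to bound step sizes; the first is what makes $A$ convex, and the second is used in the separate Lemma~\ref{lem:constantfactor} to show $\pthreshold\ge c\beta^2 k\tc$, which you need to convert the deviation $\gamma\tc$ into a fraction of $\epsilon\pthreshold/k$. That lemma is missing from your outline and without it the $\epsilon$-gap argument does not close.

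Finally, in the supercritical direction you assert $f(\seed,t)\ge\delta$ on all of $[0,1/(3\phi)]$ directly from the definition of $\pthreshold$; the paper has to first argue $A(\overline t)\le 1/4$ before the $(1-A)$ factor turns the seed gap into the needed uniform lower bound, and then invokes two further lemmas (Chebyshev on geometrically spaced checkpoints, then Chernoff) to push from $t_0$ to $1/(3\phi)$ and beyond. Your last paragraph captures the Chernoff endgame correctly, but the bridge from $t_0$ to $1/(3\phi)$ is a genuine additional step.
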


\paragraph{Forced Linearizations.} 
We begin our proof of Theroem~\ref{thm:percolationthreshold} by defining two notions: Halting and Cheating three-stage
percolations. Halting percolation is pessimistic: it stops the moment
it encounters a problem. If $G$ is infected by halting percolation, it
will be infected.

\begin{definition}[{\sc Halting Three-Stage Percolation}]
Let $G$ be a Templated Multisection graph where every vertex $u$ has
threshold $r(u)$. Vertices can have three states: healthy, latent, and
contagious. At timestep $0$, select $\seed$ uniformly across the graph
and mark them as latent. Mark all other vertices as healthy. At every
timestep, choose one latent vertex in every cluster and mark it as
contagious. Then, every healthy vertex $u$ with $r(u)$ or more
contagious neighbors become latent. The process terminates the first
time \emph{any} cluster has zero latent vertices.
\end{definition}

Our second definition is Cheating Three-Stage Percolation. Cheating
percolation is optimistic: it cheats by making vertices contagious
even if they have fewer neighbors (than the corresponding thresholds)
infected. If $G$ is not infected by cheating percolation, it will not
be infected.

\begin{definition}[{\sc Cheating Three-Stage Percolation}]
Use the same initialization as Halting Three-Stage Percolation. At
every timestep, choose one latent vertex in every cluster and mark it
as contagious. If there are no latent vertices in a cluster, instead
choose one healthy vertex in that cluster and mark it as
contagious. The process terminates the first time \emph{every} cluster
has zero latent vertices.
\end{definition}

For $k=1$ the two definitions coincide and are the same. 
Theorem~\ref{thm:percolationthreshold} follows from 
Lemma~\ref{lem:upper} and Lemma~\ref{lem:lower}. The next lemma addresses the growth in $\pi_r(t)$.

\begin{lemma}[Proved in Section~\ref{proof:pirlemma}]
\label{lem:pirlemma}
For any $p\geq q, x\geq 1$ and any $t\geq 4r$ with $\phi xt \leq 1/3$, we have $\pi_r( xt) \leq 3\left( \frac{4x}{3(1-p)} \right)^r \pi_r(t)$
and if $3x(1-p)>4$ then
$\pi_r(t) \leq 4\left(\frac{4}{3x(1-p)}\right)^r \pi_r(xt)$.
\end{lemma}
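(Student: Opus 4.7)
My plan is to sandwich $\pi_r(u)$ between matching two-sided bounds of the form $C(\phi u)^r/r!$ and then simply divide, taking the ratio at $u = t$ and $u = xt$. Both inequalities of the lemma then fall out from the same pair of bounds.

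For the upper bound, I apply the elementary estimate $\binom{n}{i}p^i(1-p)^{n-i} \leq (np)^i/i!$ termwise to the definition of $\pi_r(u)$ and collapse the double sum using the binomial theorem:
\begin{equation*}
\pi_r(u) \;=\; \sum_{s \geq r}\sum_{i+j=s} \binom{k_p u}{i}p^i(1-p)^{k_p u-i}\binom{k_q u}{j}q^j(1-q)^{k_q u-j} \;\leq\; \sum_{s\geq r}\frac{(\phi u)^s}{s!}.
\end{equation*}
Under the hypothesis $\phi u \leq 1/3$ (which holds for both $u=t$ and $u=xt$), consecutive terms of this series have ratio at most $\phi u/(r+1) \leq 1/6$ for $r \geq 1$, so the geometric tail is dominated by its leading term and gives $\pi_r(u) \leq \frac{6}{5}(\phi u)^r/r!$.

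For the matching lower bound, I restrict to the single event that $\bin(k_p u,p) + \bin(k_q u,q)$ equals exactly $r$, and use $\binom{n}{i} \geq (3/4)^i n^i / i!$, which is valid whenever $n \geq 4i$. This is precisely where the hypothesis $t \geq 4r$ enters: if $k_p \geq 1$ then $k_p u \geq u \geq 4r \geq 4i$ for all $i \leq r$, and similarly for $k_q$; the degenerate cases $k_p = 0$ or $k_q = 0$ force the corresponding index to zero and are handled trivially. Combining this with $(1-p)^{k_p u - i} \geq (1-p)^{k_p u}$, the binomial theorem, and Bernoulli's inequality $(1-p)^{k_p u}(1-q)^{k_q u} \geq 1 - \phi u \geq 2/3$, I obtain
\begin{equation*}
\pi_r(u) \;\geq\; \frac{2}{3}\left(\frac{3}{4}\right)^r \frac{(\phi u)^r}{r!}.
\end{equation*}

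Dividing the upper bound at $xt$ by the lower bound at $t$ yields $\pi_r(xt)/\pi_r(t) \leq (9/5)(4x/3)^r \leq 3(4x/(3(1-p)))^r$, using only $1-p \leq 1$; this is the first claimed inequality. Swapping the roles of $t$ and $xt$ gives $\pi_r(t)/\pi_r(xt) \leq (9/5)(4/(3x))^r \leq 4(4/(3x(1-p)))^r$, which is the second (the side condition $3x(1-p)>4$ is what makes the bound genuinely contractive in $r$, but it is not needed for the derivation itself). The only real obstacle is bookkeeping of constants: making sure the thresholds $\phi u \leq 1/3$ and $u \geq 4r$ line up so that the crude upper bound $\binom{n}{i}p^i(1-p)^{n-i} \leq (np)^i/i!$ and the matching lower bound $\binom{n}{i} \geq (3/4)^i n^i/i!$ combine cleanly under division into the stated factors of $3$ and $4$.
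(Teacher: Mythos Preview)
Your proof is correct and follows essentially the same sandwich-and-divide strategy as the paper: both bound $\pi_r(u)$ above and below by constant multiples of $(\phi u)^r/r!$ via the elementary estimates $\binom{n}{i}\le n^i/i!$ and $\binom{n}{i}\ge (3n/4)^i/i!$ for $n\ge 4i$, collapse the convolution with the binomial theorem, and then take the ratio at $u=t$ and $u=xt$. The one minor difference is that the paper retains the factors $(1-p)^{k_p u}(1-q)^{k_q u}$ in both bounds and cancels them in the ratio (which for the second inequality leaves a residual $(1-p)^{-(x-1)tk_p}(1-q)^{-(x-1)tk_q}$ that must be controlled by $e^{2(x-1)\phi t}\le e^{2/3}$), whereas you discard them at the outset via $(1-p)^{n-i}\le 1$ in the upper bound and Bernoulli in the lower bound; this avoids the exponential correction entirely and makes the $(1-p)$ appearing in the lemma's statement pure slack in your argument.
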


\begin{definition}
\label{def:sdef}
Let $\seed^i_r$ be the number of seeded vertices in cluster $i$ with threshold $r$.
Let $S^i_r(t)$ to be the number of non-seeded vertices in cluster $i$ that have
threshold $r$ and have $r$ or more infected neighbors then $S^i_r(t)$ is a random
variable which is  $\bin(\eta_r - \seed^i_r, \pi_r(t))$ where $\eta_r$ is the number of vertices with threshold $r$. Note $\E[\eta_r] = \zeta_r \eta = \zeta_r n/k$. 
Let $S^i(t) = \sum_{r=2}^{\rmax} S^i_r(t)$ and $S(t)=\sum_i S^i(t)$.
\end{definition}

\noindent The arguments in \cite{gnp} for a fixed threshold can be modified to prove the next lemma, it pretends that the percolation for different 
thresholds are proceeding simultaneously. For a fixed threshold the
derivation uses a martingale argument and Doob's $L_2$ inequality
which bounds the deviation of the entire trajectory from the
expectation. However martingales are preserved under addition -- 
we bound the per step maximum value for which we
use Lemma~\ref{lem:pirlemma} (first part).

\begin{lemma}[Proved in Section~\ref{proof:StSclose}]
Let $t_0=\frac{96}{(1-p)^2\beta^2} \tc$.
For $q\leq p\leq1/2$ and all fixed $\gamma,\beta>0$ if $n \geq n_0(\beta,\gamma,k)$ which is  sufficiently large, with probability at least $1-c_0/(\gamma^2 \beta^{2\rmax} t^*)$ for some absolute constant $c_0$, 
simultaneously for all $i$,
\begin{equation*}
\sup_{1 \leq t \leq t_0} \left | S^i( t) - \left(\eta - \frac{\seed}{k} \right) A(t) \right| \leq \gamma  \tc
\end{equation*}
\label{lem:StSclose}
\end{lemma}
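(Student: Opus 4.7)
The plan is to adapt the Doob $L^2$-martingale argument of Janson et al.\ for a single threshold so that it runs simultaneously over all $\rmax$ threshold classes, exploiting the fact that sums of martingales are martingales and that Lemma~\ref{lem:pirlemma} absorbs the per-increment growth across threshold classes.

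First, a standard Chernoff concentration step reduces the target quantity to $M^i(t) := S^i(t) - \sum_r (\eta_r - \seed^i_r)\pi_r(t)$. Since $\eta/k$ and $\seed/k$ both diverge with $n$ (the latter because $\seed \geq \pthreshold \geq \tc \to \infty$), on an event of probability $1-e^{-\Omega(\tc)}$ every $\eta_r$ and $\seed^i_r$ is within a $1+o(1)$ factor of its expectation, so replacing $\sum_r (\eta_r - \seed^i_r) \pi_r(t)$ by $(\eta - \seed/k)A(t)$ costs only $o(\tc)$ uniformly in $t \leq t_0$.

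Next, reveal the activated vertices and their edges one at a time and let $(\mathcal{F}_t)$ be the natural filtration. Conditionally on $\mathcal{F}_t$, $S^i_r(t)$ has distribution $\bin(\eta_r - \seed^i_r, \pi_r(t))$, which is exactly the ingredient needed to show that $M^i_r(t) := S^i_r(t) - (\eta_r - \seed^i_r)\pi_r(t)$ is a martingale for each $r$, as in \cite{gnp}. The sum $M^i(t) := \sum_r M^i_r(t)$ is therefore also a martingale, and Doob's $L^2$ maximal inequality gives
\[ \Pr\!\Bigl[\sup_{t \leq t_0} |M^i(t)| \geq \gamma \tc\Bigr] \leq \frac{4\, \E[M^i(t_0)^2]}{\gamma^2 (\tc)^2}. \]
To bound $\E[M^i(t_0)^2]$, write it as a telescoping sum of per-step conditional variances. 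The choice $t_0 = (96/((1-p)^2\beta^2))\tc$ guarantees that $\phi t_0 \leq 1/3$ (using the hypothesis $\eta\phi = o(\sqrt{\beta\eta})$), so Lemma~\ref{lem:pirlemma} with $x = t_0/\tc$ yields $\pi_r(t_0) \leq (C/\beta^2)^r \pi_r(\tc)$ for an absolute constant $C$, and hence $A(t_0) \leq (C/\beta^2)^{\rmax} A(\tc)$. A Cauchy--Schwarz-type summation across the $\rmax = O(1)$ threshold classes yields $\E[M^i(t_0)^2] = O(\eta A(t_0))$, and the defining relation $(n-\pthreshold)A(\tc) = k\tc - \pthreshold$ converts $\eta A(\tc)$ into $O(\tc)$. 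Substituting back gives $\E[M^i(t_0)^2] = O(\tc/\beta^{2\rmax})$, and hence the desired $O(1/(\gamma^2\beta^{2\rmax}\tc))$ Doob bound for a single cluster; a union bound over the $k=O(1)$ clusters absorbs into the absolute constant $c_0$.

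The main technical obstacle is the per-step variance bound: the martingales $M^i_r$ for different thresholds share the same underlying edge-revelation randomness and are therefore correlated, so decomposing $\E[M^i(t_0)^2]$ cleanly into a sum over $r$ requires either a careful conditional-variance computation at each increment or a Cauchy--Schwarz overhead of size $\rmax$. After that point the argument is a mechanical application of Doob's inequality, and the $\beta^{-2\rmax}$ factor in the final probability comes entirely from iterating Lemma~\ref{lem:pirlemma} across all $\rmax$ threshold classes in order to pass from $A(\tc)$ to $A(t_0)$.
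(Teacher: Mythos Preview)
Your overall architecture matches the paper's, but there is a genuine gap at the martingale step. The centered process $M^i_r(t) = S^i_r(t) - (\eta_r - \seed^i_r)\pi_r(t)$ is \emph{not} a martingale. Writing $S^i_r(t) = \sum_u \mathbbm{1}[Y_u \leq t]$ for the infection times $Y_u$, a one-line computation shows that for $Y_u > t$ one has $\E[\mathbbm{1}[Y_u \leq t+1] - \pi_r(t+1) \mid \mathcal{F}_t] = -\pi_r(t)\,\frac{1-\pi_r(t+1)}{1-\pi_r(t)} \neq -\pi_r(t)$, so the centered indicator fails the martingale property. Your sentence ``Conditionally on $\mathcal{F}_t$, $S^i_r(t)$ has distribution $\bin(\eta_r - \seed^i_r, \pi_r(t))$'' is also not right as stated: $S^i_r(t)$ is $\mathcal{F}_t$-measurable, not conditionally binomial. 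The paper's fix is to normalize: $\xi(t) = M^i_r(t)/(1-\pi_r(t))$ is an honest martingale and $\xi_{rev}(t) = M^i_r(t)/\pi_r(t)$ is a reverse martingale; Doob's $L^2$ inequality is applied to these, and since $|M^i_r(t)| \leq |\xi(t)|$ and $|M^i_r(t)| \leq |\xi_{rev}(t)|$ one transfers the bound back to $M^i_r$. In the regime $t \leq t_0$ the normalization $1-\pi_r(t)$ is bounded below by a constant, so once you insert this correction your argument goes through with only constant-factor changes.

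Separately, the obstacle you flag at the end is not one: the processes $S^i_r$ for different thresholds $r$ are sums over \emph{disjoint} vertex sets, and since each non-seeded vertex's infection time depends only on its own edges to the contagious set, the $S^i_r$ are mutually independent across $r$. So $\Var[M^i(t_0)] = \sum_r \Var[M^i_r(t_0)]$ exactly, with no Cauchy--Schwarz loss; the paper simply bounds each $r$ separately and uses the triangle inequality, which is equivalent up to an $\rmax = O(1)$ factor.
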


\noindent The next lemma follows from using the second part of
Lemma~\ref{lem:pirlemma}.

\begin{lemma}
\label{lem:constantfactor}
$\pthreshold$ is not too small, i.e., 
$\frac{9(1-p)^2\beta^2}{128}k\tc \leq \pthreshold$. Note $p\leq 1/2$ and $\beta>0$.
\end{lemma}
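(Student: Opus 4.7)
The plan is to translate the claim into an upper bound on $A(\tc)$. From the defining equation $f(\pthreshold,\tc)=0$ (which holds because $\tc$ is the minimizer of $f(\pthreshold,\cdot)$ and $\pthreshold$ is the smallest seed with $\min_t f(\pthreshold,t)\geq 0$), together with $A(\tc)<1$, one obtains the identity
\[
\pthreshold \;=\; \frac{k\tc - n A(\tc)}{1 - A(\tc)}.
\]
Thus the desired inequality $\pthreshold \geq \frac{9(1-p)^2\beta^2}{128}k\tc$ reduces to an upper bound of the form $A(\tc) \leq (1-c)\tc/\eta$ with $c = 9(1-p)^2\beta^2/128$, up to a lower order correction (absorbed since the hypotheses of Theorem~\ref{thm:mainthm} force $k\tc\ll n$).

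To establish this upper bound on $A(\tc)$, I would split $A(\tc)=\zeta_1\pi_1(\tc)+\sum_{r\geq 2}\zeta_r\pi_r(\tc)$ and treat the two pieces separately. For the threshold-one term, the crude Markov bound $\pi_1(\tc)\leq \phi\tc$ on the Binomial sum defining $\pi_r$, combined with the hypothesis $\zeta_1\eta\phi\leq 1-\beta$ of Theorem~\ref{thm:mainthm}, gives $\zeta_1\pi_1(\tc)\leq (1-\beta)\tc/\eta$. This consumes most of the budget, and leaves a slack of $(\beta-c)\tc/\eta$ for the higher-threshold contributions.

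For the $r\geq 2$ terms, I would apply the second part of Lemma~\ref{lem:pirlemma} with $x=1/(3\phi\tc)$, so that $xt=1/(3\phi)$ sits at the upper endpoint of the feasible range. The required hypothesis $3x(1-p)>4$ becomes $\phi\tc<(1-p)/4$, under which the lemma yields $\pi_r(\tc)\leq 4\bigl(4\phi\tc/(1-p)\bigr)^r\,\pi_r(1/(3\phi))$. Bounding $\pi_r(1/(3\phi))\leq 1/3$ by Markov on a Binomial of mean $1/3$, and summing the resulting geometric series over $r\geq 2$, gives $\sum_{r\geq 2}\zeta_r\pi_r(\tc) = O\bigl(\phi^2\tc^2/(1-p)^2\bigr)$. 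Combining this with the lower bound $\tc\geq \beta/(2\phi^2\eta)$ from Theorem~\ref{thm:convexity} (equivalently $\phi^2\tc\eta\geq \beta/2$), and carefully tracking constants (the factor $4$ from Lemma~\ref{lem:pirlemma}, the $1/3$ from Markov at the boundary, and the $1/2$ from Theorem~\ref{thm:convexity}), yields $\sum_{r\geq 2}\zeta_r\pi_r(\tc)\leq (\beta-c)\tc/\eta$ with $c=9(1-p)^2\beta^2/128$.

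The main obstacle is reconciling the constants across these applications so that they combine to yield precisely $9/128$, rather than a looser absolute constant; the particular numerical value emerges from the specific interplay of the factor $4$ in Lemma~\ref{lem:pirlemma}, the Markov prefactor at $t=1/(3\phi)$, and the $\beta/2$ lower bound on $\phi^2\tc\eta$. A secondary issue is the regime $\phi\tc\geq (1-p)/4$, where Lemma~\ref{lem:pirlemma} second part does not directly apply; in that case $\tc$ is already comparable to the boundary $1/(3\phi)$, so $k\tc\leq k/(3\phi)$ is controlled and the required inequality can be handled by a direct estimate evaluating $f(\pthreshold,t)\geq 0$ at the boundary point.
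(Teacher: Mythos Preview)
Your argument for the $r\geq 2$ contribution contains a directional error that breaks the proof. You bound
\[
\sum_{r\geq 2}\zeta_r\pi_r(\tc)\;\leq\; C\,\frac{\phi^2\tc^2}{(1-p)^2}
\]
via Lemma~\ref{lem:pirlemma}, and then invoke the \emph{lower} bound $\tc\geq \beta/(2\phi^2\eta)$ from Theorem~\ref{thm:convexity}, equivalently $\phi^2\tc\eta\geq \beta/2$, to conclude $\sum_{r\geq 2}\zeta_r\pi_r(\tc)\leq (\beta-c)\tc/\eta$. But writing $\phi^2\tc^2 = (\phi^2\tc\eta)\cdot(\tc/\eta)$, a lower bound on $\phi^2\tc\eta$ only yields a lower bound on your upper estimate, not the upper bound you need. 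There is in fact no usable upper bound on $\phi^2\tc\eta$ in this setting: since $\phi\tc\leq 1/3$ while the expected degree $\phi\eta$ may grow with $n$, one has $\phi^2\tc\eta\leq \phi\eta/3$, which is unbounded. So your estimate $O(\phi^2\tc^2)$ can be much larger than $(\beta-c)\tc/\eta$, and the chain collapses. (The best direct bound available is $A(\tc)\leq \tc/\eta$, which follows already from $f(\pthreshold',\tc)=0$ and is too weak by a factor of roughly $\beta$.)

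The paper's proof does not attempt to bound $A(\tc)$ directly. Instead it evaluates the constraint $f(\pthreshold',t)\geq 0$ at a \emph{second} point $t=\tc/z$ with $z=\Theta(1/((1-p)^2\beta))$, then uses the second part of Lemma~\ref{lem:pirlemma} to compare $\pi_r(\tc/z)$ with $\pi_r(\tc)$ for $r\geq 2$. The crucial step you are missing is that after this scaling one substitutes the \emph{identity} $(n-\pthreshold')A(\tc)=k\tc-\pthreshold'\leq k\tc$ coming from $f(\pthreshold',\tc)=0$, so that
\[
(n-\pthreshold')\sum_{r\geq 2}\zeta_r\pi_r(\tc/z)\;\leq\;\frac{64}{9z^2(1-p)^2}\,(n-\pthreshold')A(\tc)\;\leq\;\frac{64}{9z^2(1-p)^2}\,k\tc.
\]
Combined with the $\zeta_1$ term (handled exactly as you do) this leaves $0\leq -\beta k\tc/z + O(k\tc/z^2)+\pthreshold'$, and choosing $z$ to balance the two $k\tc$ terms gives $\pthreshold'\geq \Theta(\beta^2(1-p)^2)\,k\tc$. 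It is this two-point comparison, using both $f(\pthreshold',\tc)=0$ and $f(\pthreshold',\tc/z)\geq 0$, that eliminates the dependence on $\eta$; a single-point estimate on $A(\tc)$ cannot.
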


\begin{proof}
Recall $f(\seed,t)= (n-\seed)A(t) -  kt + \seed$ and $\tc(\seed)$ is the value of $t$ that minimizes $f(\seed,t)$.
Consider decreasing $\pthreshold$ to be a fractional value $\pthreshold'$ such that 
\[ f(\pthreshold',\tc)=(n-\pthreshold')A(\tc) - k\tc + \pthreshold' = 0 \qquad \forall t \leq 1/(3\phi) f(\pthreshold',t) \geq 0 \]
Now $ (n-\pthreshold')A(\tc) = k\tc - \pthreshold'$.
Set $z=\frac{32}{9(1-p)^2\beta}$ and $f(\pthreshold',\tc/z) \geq 0$ rewrites as 
\begin{eqnarray*}
0 & \leq & f(\pthreshold',\tc/z)  = (n-\pthreshold')A(\tc/z) - k\tc/z + \pthreshold' \\
& = & (n- \pthreshold')\zeta_1 \pi_1(\tc/z) + (n - \pthreshold') \sum_{r=2}^{\rmax} \zeta_r \pi_r(\tc/z) - k\tc/z + \pthreshold' \qquad \mbox{(Expanding $A$)}
\end{eqnarray*}
Now for $t\geq 1$, we have $\pi_1(t) = 1 - (1-p)^{k_pt}(1-q)^{k_qt} \leq 1 - (1 - pk_pt)(1-qk_qt) \leq pk_pt + qk_qt = t\phi$ (note $(1-p)^z \geq 1 - pz$ for all $z\geq 1$). Therefore
\begin{eqnarray*}
0 & \leq & n \zeta_1 \phi \tc/z +  (n-\pthreshold') \sum_{r=2}^{\rmax} \zeta_r \pi_r(\tc/z) - k\tc/z + \pthreshold'\\
& \leq &  (n-\pthreshold') \sum_{r=2}^{\rmax} \zeta_r \pi_r(\tc/z) - \beta  k\tc/z + \pthreshold' \qquad \mbox{(Since $\zeta_1 \phi\eta =\zeta_1 \phi n/k \leq (1-\beta)$)}\\
& \leq  &\frac{64}{9z^2(1-p)^2} (n-\pthreshold')\sum_{r=2}^{\rmax} \zeta_r \eta \pi_r(\tc) - \beta k\tc/z + \pthreshold'  \qquad \mbox{(Using Lemma~\ref{lem:pirlemma} with $x=1/z$.)}\\
& =  & \frac{64}{9z^2(1-p)^2} (n-\pthreshold')A(\tc) - \beta k\tc/z + \pthreshold' \\
& \leq & \beta k\tc/(2z) - \beta k\tc/z + \pthreshold' = - \beta k\tc/(2z) + \pthreshold'
\end{eqnarray*}
\end{proof}

\begin{lemma}
\label{lem:lower}
If $\seed < (1-\epsilon)\pthreshold$ then the cheating percolation stops with probability 
$1-O(\epsilon^{-2}/(\tc\beta^{2(\rmax+1)}))$ for all sufficiently large $n$.
\end{lemma}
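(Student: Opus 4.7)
The plan is to locate a time $t \leq \tc$ at which the expected latent pool $f(\seed,t)$ is substantially negative, transfer this gap to the per-cluster realizations via Lemma~\ref{lem:StSclose}, and conclude that cheating percolation halts by that time with at most $k\tc=o(n)$ infections.

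\textbf{Step 1 (deterministic gap).} I would first evaluate $f(\seed,\tc)$ at $\seed=(1-\epsilon)\pthreshold$. Since $\partial_{\seed}f=1-A(t)$ and, in the regime $\phi t\le 1/3$ with $p\le 1/2$, one checks $A(\tc)\le A(1/(3\phi))\le 1/3$ (using $\pi_r(1/(3\phi))\le (1/3)^r$), the slope satisfies $1-A(\tc)=\Theta(1)$. Combined with the boundary definition of $\pthreshold$, which forces $f(\pthreshold,\tc)\le O(1)$ (dropping $\pthreshold$ by one already violates the $\ge 0$ condition), we obtain $f(\seed,\tc)\le -\tfrac{1}{2}\epsilon\,\pthreshold\,(1-A(\tc))$. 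Plugging in Lemma~\ref{lem:constantfactor}, $\pthreshold\ge (9(1-p)^2\beta^2/128)\,k\,\tc$, yields $f(\seed,\tc)\le -c_1\epsilon\beta^2 k\tc$ for some absolute constant $c_1>0$.

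\textbf{Step 2 (concentration).} Apply Lemma~\ref{lem:StSclose} with $\gamma=c_1\epsilon\beta^2/4$. The resulting failure probability $O(1/(\gamma^2\beta^{2\rmax}\tc))$ collapses into $O(\epsilon^{-2}/(\tc\beta^{2(\rmax+1)}))$ after absorbing $\beta$-constants. On the success event, uniformly over every cluster $i$ and every $t\in[1,t_0]$,
\[|S^i(t)-(\eta-\seed/k)A(t)|\le\gamma\tc,\qquad |S(t)-(n-\seed)A(t)|\le k\gamma\tc.\]

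\textbf{Step 3 (termination of cheating percolation).} Define the per-cluster deficit $d^i_t:=\seed^i+S^i(t)-t$. A cluster cheats only when its latent pool is empty, so the cumulative cheats satisfy $c^i_t=\max_{s\le t}(-d^i_s)^+$ and the actual latent count is $L^i_{\mathrm{act}}(t)=d^i_t+c^i_t\ge 0$. Cheating percolation terminates at the first time $T$ with every $L^i_{\mathrm{act}}(T)=0$, i.e., every cluster's deficit attains its running maximum at $T$. On the success event of Step 2, summing over clusters yields
\[\sum_i d^i_{\tc}\le f(\seed,\tc)+k\gamma\tc\le -\tfrac{c_1}{2}\epsilon\beta^2 k\tc<0,\]
and the per-cluster bound makes each $d^i_{\tc}$ individually $-\Omega(\epsilon\beta^2\tc)$. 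By Theorem~\ref{thm:convexity}, $f(\seed,\cdot)$ is minimized at some $\tc(\seed)\le\tc$ (convexity of $A$), so each $d^i_t$ is non-increasing on $[1,\tc(\seed)]$ up to the $\gamma\tc$ slack, and each cluster's running-maximum deficit is attained at or before $\tc$. Hence cheating percolation halts at some $T\le\tc$; since the infected set at termination is precisely the set of contagious vertices, we get $|\I(T)|=kT\le k\tc=o(n)$ by the sparsity hypothesis $\eta\phi=o(\sqrt{\beta\eta})$, which guarantees $\tc=o(\eta)$.

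\textbf{Main obstacle.} The delicate point is the simultaneous termination claim in Step 3. For $k=1$ cheating coincides with halting and $d_{\tc}<0$ directly forces the latent pool to hit zero. For $k>1$, cheating in one cluster promotes a healthy vertex, which could a priori sustain other clusters by boosting their $\pi_r$-counts; the key is that Lemma~\ref{lem:StSclose} gives per-cluster concentration with a common failure probability, so all $d^i_t$ track the common deterministic profile $f(\seed,t)/k$ and, by convexity, reach their maximum at essentially the common time $\tc(\seed)$. Carefully formalizing this global common-time termination, and controlling the $\gamma\tc$ window within which all clusters synchronize, is the main bookkeeping burden.
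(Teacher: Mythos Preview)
Your plan is the same as the paper's: produce a $\Theta(\epsilon\pthreshold)$ deterministic deficit in $f(\seed,\cdot)$ at (a time $\le$) $\tc$, feed Lemma~\ref{lem:StSclose} with $\gamma$ proportional to $\epsilon\beta^2$ so that the failure probability becomes $O(\epsilon^{-2}/(\tc\beta^{2(\rmax+1)}))$, and then read off per-cluster termination. Two remarks on the execution. First, the paper's Step~3 is much lighter than yours: it works at the single time $\underline t=\tc((1-\epsilon)\pthreshold)\le\tc$, verifies $S^i(\underline t)+\seed^i<\underline t$ for every $i$, and stops there---no running-maximum or Skorokhod-type bookkeeping. (The paper also gets $A(\tc)\le 1/4$ directly from the defining equation $f(\pthreshold',\tc)=0$ together with $k\tc\le n/12$, rather than via your $\pi_r(1/(3\phi))$ estimate.) Second, the ``main obstacle'' you flag---that for $k>1$ the clusters must hit zero latent \emph{simultaneously}---is a genuine subtlety which the paper's one-line conclusion (``the percolation stops in every cluster before $\underline t$'') also leaves implicit; your reflection formula $L^i_{\mathrm{act}}(t)=d^i_t+c^i_t$ does not quite close it either, because $S^i(t)$ over-counts the cheated vertices that later meet threshold, whereas the latent recursion needs the strictly smaller count of vertices that ever \emph{became} latent. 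So your proposal and the paper are at the same level of rigor on this point; the extra machinery you introduce does not buy a resolution of the obstacle you correctly identify.
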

\begin{proof}
Consider decreasing $\pthreshold$ to be a fractional value $\pthreshold'$ 
such that $\forall t \leq 1/(2p) f(\pthreshold',\tc) \geq 0$ and 
\[ f(\pthreshold',\tc)=(n-\pthreshold')A(\tc) - k\tc + \pthreshold' = 0 \]
Note $\pthreshold' \geq \pthreshold - 1$. 
(This step is also helpful in proving Lemma~\ref{lem:constantfactor}). 
Observe that $\pthreshold' \leq k\tc$ since $A(t)$ is non-negative. Therefore $n A(\tc) \leq k\tc + \pthreshold' A(\tc) \leq 2 k\tc$.
But notice that we assumed $\eta\phi \geq 4$ and $t\phi \leq 1/3$ and therefore $k\tc \leq n/12$. Therefore $A(\tc) \leq 1/4$.

Let $\underline{\seed}= \pthreshold' - \epsilon \pthreshold$ (differs from $(1-\epsilon)\pthreshold$ by at most $1$)
and $\underline{t}=\tc(\underline{\seed})$. From Theorem~\ref{thm:convexity} $\underline{t} \leq \tc$.
\begin{eqnarray*}
(n-\pthreshold')A(\tc) -  k\tc + \pthreshold' =  0   \qquad 
& \implies&  
(n-\underline{\seed})A(\tc) -  k\tc + \underline{\seed}  = - (\pthreshold' - \underline{\seed})(1 - A(\tc))  \\
&\implies &
(n-\underline{\seed})A(\underline{t}) -  k\underline{t} + \underline{\seed} \leq  - (\pthreshold' - \underline{\seed})(1 - A(\tc))
\end{eqnarray*}
The last line follows from the fact that in the range $[\underline{t},\tc]$ the function 
$f(\underline{\seed},t)$ is increasing. But since $A(\tc) \leq 1/4$ we now have that
\[ 
(n-\underline{\seed})A(\underline{t}) -  k\underline{t} + \underline{\seed} \leq - 3 \epsilon \pthreshold/4 
\]
Using $\gamma=\frac{\epsilon 9(1-p)^2\beta^2}{128}$ in Lemma~\ref{lem:StSclose} 
with probability $1-O(\epsilon^2/\tc\beta^{2(\rmax+1)})$ for every cluster $i$,
\[ \sup_{1 \leq t \leq t_0} \left | S^i( t) - \E[S^i(t)] \right| \leq \frac{\epsilon 9(1-p)^2\beta^2}{128} \tc \leq \frac{\epsilon}{2k} \pthreshold  \qquad \mbox{(Using Lemma~\ref{lem:constantfactor}.)}\]

\noindent Therefore $S^i(\underline{t}) \leq \E[S^i(\underline{t})] + \frac{\epsilon}{2k} \pthreshold$. Let $\underline{\seed^i}$ be the number of seed vertices in cluster $i$. 
Using Chernoff bounds we can assert that with probability $(1-\delta/(2k))$ 
$\underline{\seed^i} \leq (1+\epsilon/8)\underline{\seed}/k$ -- observe that this 
result will hold when $\underline{\seed} \geq \frac{3k}{\epsilon^2} \ln \frac{2k}{\delta}$ but $\underline{\seed} = \Omega(t^*) \rightarrow \infty$. 
Therefore using union bound with probability at least $1-\delta$, for every cluster $i$,

\begin{eqnarray*}
S^i(\underline{t}) + \underline{\seed^i} & \leq & \E[S^i(\underline{t})] + (1+\frac{\epsilon}{8}) \frac1k \underline{\seed} + \frac{\epsilon}{2k} \pthreshold =  \frac1k(n-\underline{\seed})A(\underline{t})  + \frac1k\underline{\seed} + \frac1k\frac{5\epsilon}{8} \pthreshold \\
& = & \frac1k\left( (n-\underline{\seed})A(\underline{t})  + \underline{\seed} - k\underline{t} + \frac{5\epsilon}{8} \pthreshold \right) + 
 \underline{t} \leq  \frac1k\left(  - \frac{3 \epsilon}{4}  \pthreshold + \frac{5\epsilon}{8} \pthreshold \right) + \underline{t} <  \underline{t}
\end{eqnarray*}
Therefore with probability at least $1-\delta$ the percolation stops {\em in every cluster} before $\underline{t}$. 
\end{proof}

\paragraph{The large seed case:} In the other case we show that if the
percolation survives sufficiently past the bottleneck region then it
leads to complete percolation.  Note that for the following lemma we
can assume that we started with a seed $\overline{\seed} =
(1+\epsilon)\pthreshold$ and $\epsilon$ is small. If the seed size is
larger we can simply ignore the remaining nodes.
The proof is broken into three lemmas, culminating in Lemma~\ref{lem:upper}.

\begin{lemma}[Proved in Section~\ref{proof:pb1}]
\label{lem:pastbottleneck1}
When the seed size is $\overline{\seed}=(1+\epsilon)\pthreshold$, $\epsilon\leq 1/9$ and we have reached $t=\min\{\frac{96}{(1-p)^2\beta^2}\tc,1/(3\phi)\}$
then with
probability $1-O(\frac1{\tc})$, $S^i(t) > t-a$ for all $t \in [\frac{96}{(1-p)^2\beta^2}\tc, 1/(3\phi)]$, i.e., as $n$ increases and $\tc \rightarrow \infty$ the percolation continues till $t=1/(3\phi)$.
\end{lemma}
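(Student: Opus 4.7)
The plan is to push the martingale concentration of Lemma~\ref{lem:StSclose} past the bottleneck $t_1 := \frac{96}{(1-p)^2\beta^2}\tc$ by pairing it with a deterministic margin inherited from $\overline{\seed}=(1+\epsilon)\pthreshold$ exceeding the critical seed. By the definition of $\pthreshold$, $f(\pthreshold,t)\ge 0$ for every $t\le 1/(3\phi)$, so substituting $\overline{\seed}$ gives $f(\overline{\seed},t) \ge \epsilon\,\pthreshold\,(1-A(t))$, which per cluster reads $\E[S^i(t)]+\overline{\seed^i}-t \ge (\epsilon/k)\pthreshold(1-A(t))$. The bound $A(\tc)\le 1/4$ proved inside Lemma~\ref{lem:lower} adapts to every $t\le 1/(3\phi)$ (since $(n-\pthreshold)A(t)\le kt$ is just $f(\pthreshold,t)\ge 0$ rewritten, combined with the density assumption $\eta\phi\ge 4$), so $1-A(t)\ge 3/4$ throughout and the per-cluster cushion is uniformly $\Omega(\epsilon\pthreshold/k)$ on $[t_1,1/(3\phi)]$.

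Next I would partition $[t_1,1/(3\phi)]$ into dyadic sub-windows $W_j := [2^j t_1, 2^{j+1} t_1]$ for $j=0,1,\ldots,J$ with $J = O(\log(1/(\phi\tc)))$, and replay the Doob $L_2$ argument of Lemma~\ref{lem:StSclose} separately on each $W_j$. The step sizes of the martingale $M_t := S^i(t)-\E[S^i(t)]$ on $W_j$ are controlled in terms of $\pi_r(2^{j+1}t_1)$, and the first part of Lemma~\ref{lem:pirlemma} relates $\pi_r(2^{j+1}t_1)$ to $\pi_r(2^j t_1)$ by a constant factor depending only on $p$ and $\rmax$; this prevents the sum of squared increments from exploding as $j$ grows. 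Doob's inequality then yields $\Pr[\sup_{t\in W_j}|M_t| > \frac{\epsilon}{2k}\pthreshold] = O(1/(2^j\,\tc\,\beta^{2(\rmax+1)}))$ on each window, and a union bound over the $O(J)$ windows contributes only a constant, giving an overall failure probability $O(1/\tc)$.

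On the complementary event, the fluctuation bound is smaller than the cushion at every integer $t\in[t_1,1/(3\phi)]$, so $S^i(t)+\overline{\seed^i}>t$ in every cluster $i$; monotonicity of $S^i(\cdot)$ interpolates to non-integer $t$, and the cheating percolation therefore retains latent vertices in every cluster all the way to $t=1/(3\phi)$, which is the claim. The main obstacle I expect is the martingale step-size control on the later windows $W_j$, where the variance has grown: without both halves of Lemma~\ref{lem:pirlemma} the ratios $\pi_r(2^{j+1}t_1)/\pi_r(2^j t_1)$ would not be bounded by constants and the telescoping of $L_2$ increments would fail; correspondingly the deviation threshold $(\epsilon/(2k))\pthreshold$ must be matched carefully against the window-dependent $L_2$ bound so that the per-window failure decays at least as $2^{-j}$ and the union bound over $j$ survives.
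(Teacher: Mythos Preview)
Your plan has a genuine gap in the dyadic-window step. The claim that $\Pr[\sup_{t\in W_j}|M_t| > \frac{\epsilon}{2k}\pthreshold] = O(1/(2^j\tc))$ is not supported: Doob's $L_2$ inequality on $W_j$ gives the second-moment control $\E[\sup_{t\in W_j}|M_t|^2] \le 16\,\eta\,A(2^{j+1}t_1)$ (exactly as in Lemma~\ref{lem:SESclose}), and this quantity \emph{grows} with $j$, since $A$ is increasing and in fact each $\pi_r(2^{j+1}t_1)$ grows geometrically in $j$ by Lemma~\ref{lem:pirlemma}. With a \emph{fixed} deviation threshold $\gamma = \frac{\epsilon}{2k}\pthreshold$ of order $\tc$, Markov's inequality yields a failure probability of order $\eta\,A(2^{j+1}t_1)/\gamma^2$, which increases in $j$ rather than decaying like $2^{-j}$; the union bound over windows therefore diverges.

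The root of the problem is that you lower-bound the cushion $f(\overline{\seed},t)/k$ by the constant $\frac{3\epsilon}{4k}\pthreshold$, discarding the fact that past the bottleneck this cushion actually grows linearly in $t$. The paper's proof exploits exactly this: using the second half of Lemma~\ref{lem:pirlemma} (applied with base point $2\tc/\beta$ and scale $z\ge 48/((1-p)^2\beta)$), it shows that for every $t\ge t_1$ one has $\E[S^i(t)+\overline{\seed^i}] \ge 3t$, not merely $\ge t+\text{const}$. With a cushion proportional to $t$ and the elementary bound $\Var[S^i(t)]\le \E[S^i(t)]$ for a sum of Bernoullis, a single Chebyshev at each checkpoint $t_j$ gives failure probability $O(1/t_j)$; these sum geometrically to $O(1/\tc)$, and no Doob maximal inequality is needed in this range.

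A smaller issue: your justification for $A(t)\le 1/4$ on all of $[0,1/(3\phi)]$ misreads $f(\pthreshold,t)\ge 0$, which gives the \emph{lower} bound $(n-\pthreshold)A(t)\ge kt-\pthreshold$, not the upper bound you state. The conclusion is nonetheless true via $A(t)\le \pi_1(t)\le \phi t\le 1/3$, so this point is easily repaired; but the variance-growth issue above is structural and forces the different route the paper takes.
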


\begin{lemma}[Proved in Section~\ref{proof:closer}]
\label{lem:closer}
If the percolation has continued till $t=(3\phi)^{-1}$ then with
probability $1-1/n^{\Omega(1)}$, the percolation does not stop till a constant fraction of the graph is infected. Moreover if the expected degree $\phi \eta$ is a slowly growing function then with probability $1-1/n^{\Omega(1)}$, the percolation does not stop till $\eta -o(\eta)$ nodes are infected.
\end{lemma}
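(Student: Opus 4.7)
The plan is to continue the three-stage percolation past the bottleneck at $t_1 = 1/(3\phi)$, beyond which the convexity analysis of Theorem~\ref{thm:convexity} no longer applies, and exploit the fact that $A(t)$ grows rapidly once $\phi t$ is allowed to exceed $1/3$. The key observation is that for any fixed $r \leq \rmax$ the binomial tail $\pi_r(t) = \Pr[\bin(k_p t, p) + \bin(k_q t, q) \geq r]$ saturates towards $1$ as $\phi t$ grows past $r$, and consequently $A(t) = \sum_r \zeta_r \pi_r(t)$ approaches $1$ as well.

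For the constant-fraction claim I would fix a large absolute constant $C$ depending only on $\rmax$ and target $t_2 = C\rmax/\phi$. Standard Chernoff bounds for $\bin(k_p t_2, p) + \bin(k_q t_2, q)$ with mean $C\rmax$ yield $\pi_r(t_2) \geq 1/2$ for each $r \leq \rmax$, so $A(t_2) \geq 1/2$. To move from $t_1$ to $t_2$ without the percolation halting, I would replace the martingale argument of Lemma~\ref{lem:StSclose} (which requires $\phi t \leq 1/3$) by a pointwise Chernoff bound: for each $t$ in this range and each cluster $i$, the variable $S^i(t) = \sum_{r=2}^{\rmax} S^i_r(t)$ is a sum of independent binomials with total mean $(\eta - \seed/k) A(t)$, and $A(t)$ stays bounded away from $0$ throughout $[t_1, t_2]$. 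A union bound over the $O(1/\phi) \leq n$ timesteps and over clusters then shows that with probability $1 - 1/n^{\Omega(1)}$, $S^i(t) \geq (\eta - \seed/k) A(t)/2$ holds uniformly, so the latent reserve stays positive and the percolation continues. At $t = t_2$ this gives at least $\eta/4$ infected vertices per cluster, a constant fraction of the whole graph.

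For the ``moreover'' part, when $\phi \eta$ is a slowly growing function, I would replace the constant $C\rmax$ by any function $\omega(\rmax)$ that still satisfies $\omega(\rmax)/\phi \leq \eta$, and push $t$ to $t_3 = \omega(\rmax)/\phi$. Then $\phi t_3 = \omega(\rmax)$, so Chernoff tails give $\pi_r(t_3) = 1 - o(1)$ for every $r \leq \rmax$, hence $A(t_3) = 1 - o(1)$, and concentration of $S^i(t_3)$ yields $\eta - o(\eta)$ infected vertices in every cluster. The main obstacle is that the delicate Doob--$L_2$ martingale bound underpinning Lemma~\ref{lem:StSclose} is tied to the step-size estimates of Lemma~\ref{lem:pirlemma}, which are valid only for $\phi t \leq 1/3$; however, past the bottleneck the expected number of newly infected vertices $(\eta - \seed/k) A(t)$ is $\Omega(\eta/\rmax!)$, large enough that a direct Chernoff bound applied at each of the $O(1/\phi)$ steps, combined by a union bound, absorbs the deviations and obviates the need for the earlier martingale machinery.
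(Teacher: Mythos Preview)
Your proposal is correct and rests on the same core idea as the paper: once $t$ reaches $(3\phi)^{-1}$, the martingale machinery of Lemma~\ref{lem:StSclose} is no longer needed because $\pi_{\rmax}\bigl((3\phi)^{-1}\bigr)$ is already an absolute positive constant (this is exactly the computation yielding the constant $c_1$ in the paper's Statement~1), so a single Chernoff bound at $t_1$ gives $S^i(t_1) \geq c_2\eta$, and monotonicity of $S^i(\cdot)$ then carries the process forward without any union bound over timesteps.

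The structural difference is in how far you push the time parameter. The paper partitions the post-bottleneck regime into three subintervals $[(3\phi)^{-1}, c_2\eta]$, $[c_2\eta, \eta - c_4/\phi]$, and $[\eta - c_4/\phi, \eta - o(\eta)]$, and for the latter two switches perspective to upper-bounding the number of \emph{healthy} vertices via explicit estimates on $1 - \pi_{\rmax}(t)$. You instead stop at $t_3 = \omega(1)/\phi$, which (choosing $\omega(1) \leq c_2\phi\eta$) still lies below $c_2\eta$, so the percolation is guaranteed to reach $t_3$ by the first-phase argument alone; you then read off $A(t_3) = 1 - o(1)$ directly, and concentration of $S^i(t_3)$ yields $\eta - o(\eta)$ infected per cluster without ever running the time parameter up to $\eta - o(\eta)$. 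This bypasses the paper's Statements~2 and~3 entirely. Your route is shorter; the paper's route traces the full trajectory of the linearised process and makes the role of the ``slowly growing $\phi\eta$'' hypothesis more visible (it is what forces $1 - \pi_{\rmax}(t) \to 0$ in the final phase). One minor redundancy in your write-up: the union bound over $O(1/\phi)$ timesteps is unnecessary, since monotonicity of $S^i(t)$ after a single Chernoff bound at $t_1$ already suffices.
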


\begin{lemma}
\label{lem:upper}
If the expected degree is at least $2$ and $\seed > (1+\epsilon)\pthreshold$ then for sufficiently large $n$ with probability $1-O(\epsilon^{-2}/\beta^{2(\rmax+1)} t^*)$
the halting percolation continues till 
an absolute constant fraction of the nodes are infected. Moreover if the 
expected degree $\phi \eta$ is a slowly growing function then with probability 
$1-O(\epsilon^{-2}/\beta^{2(\rmax+1)} t^*)$, the percolation does not stop till $\eta -o(\eta)$ nodes are infected.
\end{lemma}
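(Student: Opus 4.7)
The plan is to stage the halting percolation through three successive regions, each handled by a tool already in place: the bottleneck region $[1,t_0]$ with $t_0=\frac{96}{(1-p)^2\beta^2}\tc$ via Lemma~\ref{lem:StSclose}, the post-bottleneck region $[t_0,1/(3\phi)]$ via Lemma~\ref{lem:pastbottleneck1}, and the completion region via Lemma~\ref{lem:closer}. Since the last two phases are exactly the conclusions of the cited lemmas, the substantive work lies in pushing the halting percolation through the bottleneck; this is also where the $O(\epsilon^{-2}/(\beta^{2(\rmax+1)}\tc))$ probability in the statement originates.

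For the bottleneck step, I would mirror the proof of Lemma~\ref{lem:lower} with inequalities reversed. Writing $\overline{\seed}=(1+\epsilon)\pthreshold$, linearity of $f$ in $\seed$ gives $f(\overline{\seed},t) = f(\pthreshold,t) + \epsilon\pthreshold(1-A(t)) \geq (1-A(t))\epsilon\pthreshold$ for every $t \leq 1/(3\phi)$, using $f(\pthreshold,t)\geq 0$ from the defining property of $\pthreshold$. Since $A(t)\leq 1/4$ on the relevant range (the same observation used at the start of Lemma~\ref{lem:lower}), this delivers a uniform slack $f(\overline{\seed},t)\geq \tfrac{3}{4}\epsilon\pthreshold$ on $[1,t_0]$. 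I would then apply Lemma~\ref{lem:StSclose} with $\gamma$ proportional to $\epsilon\beta^2$ to obtain, simultaneously in all clusters $i$, the lower bound $S^i(t)\geq (\eta-\overline{\seed}/k)A(t) - \gamma\tc$, and a per-cluster Chernoff bound (valid because $\overline{\seed}=\Omega(\tc)\to\infty$) to conclude $\overline{\seed^i}\geq (1-\epsilon/8)\overline{\seed}/k$. Summing and rearranging reduces the halting condition $S^i(t)+\overline{\seed^i}>t$ to the requirement that $f(\overline{\seed},t)$ exceeds $k\gamma\tc + (\epsilon/8)\overline{\seed}$. Lemma~\ref{lem:constantfactor} guarantees $\pthreshold \gtrsim \beta^2 k\tc$, so both error terms are strictly smaller than $\tfrac{3}{4}\epsilon\pthreshold$ for the chosen $\gamma$, and hence halting percolation reaches $t_0$ in every cluster.

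Once past $t_0$, Lemma~\ref{lem:pastbottleneck1} carries the process to $t=1/(3\phi)$ with failure probability $O(1/\tc)$, and Lemma~\ref{lem:closer} then produces a constant fraction infected (and $\eta-o(\eta)$ when $\phi\eta$ is slowly growing) with failure probability $1/n^{\Omega(1)}$. A union bound over the three phases collapses to the dominant $O(\epsilon^{-2}/(\beta^{2(\rmax+1)}\tc))$ from the concentration step. The main obstacle is the accounting in the bottleneck: the chain $\gamma\sim\epsilon\beta^2$, the $1/\gamma^2$ inside Lemma~\ref{lem:StSclose}, and the $\beta^2$ floor from Lemma~\ref{lem:constantfactor} must align so that both the $\gamma\tc$ deviation and the $(\epsilon/8)\overline{\seed}$ seed deviation fit inside the $\tfrac{3}{4}\epsilon\pthreshold$ slack, and it is precisely this balancing that forces the $\beta^{2(\rmax+1)}$ dependence in the final probability. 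Simultaneity across clusters is not a real obstacle since Lemma~\ref{lem:StSclose} is already uniform in $i$ and there are only $k=O(1)$ Chernoff events to union-bound.
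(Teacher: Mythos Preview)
Your three-phase decomposition, the choice of $\gamma\sim\epsilon\beta^2$ in Lemma~\ref{lem:StSclose}, the appeal to Lemma~\ref{lem:constantfactor} to convert $\gamma\tc$ into a fraction of $\pthreshold$, and the hand-off to Lemmas~\ref{lem:pastbottleneck1} and~\ref{lem:closer} are all exactly what the paper does. Your inclusion of a per-cluster Chernoff bound on $\overline{\seed^i}$ is in fact more careful than the paper's written proof, which elides that step.

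There is one genuine gap. You claim $A(t)\leq 1/4$ on the whole interval $[1,t_0]$ and cite ``the same observation used at the start of Lemma~\ref{lem:lower}.'' But that observation is specific to the single point $\tc$: it uses $f(\pthreshold',\tc)=0$ to deduce $nA(\tc)\leq 2k\tc$ and hence $A(\tc)\leq 1/4$. It says nothing about $A(t)$ for $t>\tc$, and since $A$ is increasing and $t_0$ can be much larger than $\tc$, the citation does not justify the claim. The paper circumvents this by first locating the minimizer $\overline{t}=\tc(\overline{\seed})$, arguing (by a short contradiction) that $A(\overline{t})\leq 1/4$, obtaining the slack $f(\overline{\seed},\overline{t})\geq\tfrac{3}{4}\epsilon\pthreshold$ at that one point, and then extending to all $t\leq 1/(3\phi)$ by minimality of $\overline{t}$. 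Your linearity identity $f(\overline{\seed},t)=f(\pthreshold,t)+\epsilon\pthreshold(1-A(t))$ can be rescued more directly: since $A(t)\leq\pi_1(t)\leq\phi t\leq 1/3$ for every $t\leq 1/(3\phi)$ (the bound $\pi_1(t)\leq\phi t$ appears in the proof of Lemma~\ref{lem:constantfactor}), you get a uniform slack of $\tfrac{2}{3}\epsilon\pthreshold$ rather than $\tfrac{3}{4}\epsilon\pthreshold$, and the same choice of $\gamma$ still closes the inequality.
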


\begin{proof}
As in the proof of Lemma~\ref{lem:lower}, 
let $\overline{\seed}= \pthreshold' + \epsilon \tc$ and $\overline{t}=\tc(\overline{\seed})$. By Theorem~\ref{thm:convexity}, $\tc \leq \overline{t}$. 
Suppose that $
 (n-\overline{\seed})A(\overline{t}) -  k\overline{t} + \overline{\seed} \leq  3 \epsilon \pthreshold/4 
 $ then since  $\overline{t},\pthreshold/k$ are at most $1/(3\phi) \leq nk/6 $ when $\epsilon \leq 1$ we again have
 $A(\overline{t}) \leq 1/4$.
Suppose not. Then assume for contradiction,

\[ (n-\overline{\seed})A(\overline{t}) -  \overline{t} + \overline{\seed} < \frac{3 \epsilon}{4} \pthreshold \leq \frac{3\epsilon}{4} {\tc} \leq \frac{3\epsilon}{4}\frac{1}{3\phi} \leq \frac{3\epsilon}{4}\frac{\eta}{3\phi\eta} \leq \frac{3\epsilon}{4}\frac{n}{6k}
\]
(assuming that the expected degree is at least $2$)
which implies (since $A(t) \leq 1$ for all $t$ and $\overline{\seed}(1-A(t)) \geq 0$)
\[ nA(\overline{t}) \leq \frac{3\epsilon}{4}\frac{n}{6k} + \overline{t} \leq \frac{3\epsilon}{4}\frac{n}{6k} + \frac{n}{6k}
\leq \frac{3\epsilon}{4}\frac{n}{6k} + \frac{n}{6k} \leq \frac{n}{4} 
\]
which implies that $A(\overline{t}) \leq 1/4$ when $\epsilon \leq 1$. 
Now using definition of $\pthreshold'$, at $t=\overline{t}$,

\begin{eqnarray*}
(n-\pthreshold')A(\overline{t}) -  \overline{t} + \pthreshold' \geq  0 
& \implies &  
(n-\overline{\seed})A(\overline{t}) -  \overline{t} + \overline{\seed} \geq  (\overline{\seed} - \pthreshold')(1 - A(\overline{t}))  \\
& \implies & 
(n-\overline{\seed})A(\overline{t}) -  \overline{t} + \overline{\seed} \geq  \frac{3\epsilon}{4} \pthreshold
\end{eqnarray*}
which is a contradiction. Since $\overline{t}$ was the minimum,
\begin{equation}
\label{eqn:whew}
(n-\overline{\seed})A(t) -  t + \overline{\seed} \geq  \frac{3 \epsilon}{4} \pthreshold \qquad \forall t\leq 1/(3\phi)
\end{equation}
Again as in the proof of Lemma~\ref{lem:lower}, 
with probability $1-O(\epsilon^2/\tc\beta^{2(\rmax+1)})$ for every cluster $i$,
\[ \sup_{1 \leq t \leq t_0} \left | S^i( t) - \E[S^i(t)] \right| \leq \frac{\epsilon 9(1-p)^2\beta^2}{128} \tc \leq \frac{\epsilon}{2k} \pthreshold \]
Therefore with probability  $1-O(\epsilon^2/\tc\beta^{2(\rmax+1)})$ for all $i$ and $t\leq t_0$
\begin{eqnarray*}
 S^i(t) + \overline{\seed}^i & \geq & \frac1k\left( (n-\overline{\seed})A(t)  + \overline{\seed} \right) - \frac{\epsilon}{2k} \pthreshold \\
& \geq & \frac1k\left( \frac{3 \epsilon}{4} \pthreshold - \frac{ \epsilon}{2} \pthreshold \right) + t  \geq t \qquad \mbox{(Using Equation~\ref{eqn:whew}.)}
\end{eqnarray*}
which implies that the percolation does not stop till $t_0$. To complete the proof we now use Lemmas~\ref{lem:pastbottleneck1} and \ref{lem:closer}. 
\end{proof}

\subsection{Omitted Proofs}
\subsubsection{Proof of Theorem~\ref{thm:convexity}}
\label{proof:thm:convexity}

\begin{ndefinition}{\ref{def:maindef}} 
Let $\phi=p k_p + q k_p$ and 
$\pi_r(t) =  \Pr[ \bin(k_p t, p) + \bin(k_q t, q) \geq r]$
\begin{eqnarray*}
A(t) = \sum_{r=1}^{\rmax} \zeta_r \pi_r(t)& \qquad \qquad \qquad & f(\seed,t) = (n - \seed)A(t) - kt + \seed \\
\tc(\seed) = \argmin_{t \leq 1/(3\phi)} f(\seed,t)& &\pthreshold = \min_\seed \{ \seed | \forall \ t \leq 1/(3\phi),  f(\seed,t) \geq 0 \} 
\end{eqnarray*}
Note $\tc=\tc(\pthreshold)$. Observe that $\eta\phi$ is the expected degree.
\end{ndefinition}

\medskip
\begin{ntheorem}{\ref{thm:convexity}}
If $\zeta_1 < 2 \zeta_2/3$, $p,q\leq 1/2$ and $\phi t \leq 1/3$ then
$A(t)$ is convex.  If $\seed_ 1 < \seed_2$ then 
$\tc(\seed_1) \leq \tc(\seed_2)$. Moreover if for some constant $\beta>0$ we have
$\zeta_1 \eta\phi \leq 1-\beta$ and $\eta \phi = o(\sqrt{\beta n/k})$ then 
$\tc \geq \frac{\beta n}{2k(\phi \eta)^2} \rightarrow \infty$ as $n\rightarrow \infty$.
\end{ntheorem}

\noindent Theorem~\ref{thm:convexity} follows from the next two theorems.
We state both theorems and prove the latter theorem
(Theorem~\ref{thm:increase}) first since the former
(Theorem~\ref{thm:basic}) is a detailed verification of properties of
binomial coefficients and Theorem~\ref{thm:increase} relies on
Theorem~\ref{thm:basic}.

\begin{theorem}
\label{thm:basic}
When $\zeta_1 \leq 2\zeta_2/3$ $p,q\leq 1/2$ and $\phi t \leq 1/3$ then $A(t)$ is convex.
\end{theorem}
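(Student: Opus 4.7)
The plan is to establish $A''(t) \geq 0$ on $\phi t \leq 1/3$ by a term-by-term analysis of the second derivative, extending $\pi_r$ smoothly to real $t$. Writing $\lambda := -k_p\ln(1-p) - k_q\ln(1-q)$, $\mu := k_p p/(1-p) + k_q q/(1-q)$, and $h(t) := (1-p)^{k_p t}(1-q)^{k_q t} = e^{-\lambda t}$, the decisive computations come from $\pi_1(t) = 1 - h(t)$ and $\pi_2(t) = 1 - h(t)(1 + \mu t)$, giving
\begin{equation*}
\pi_1''(t) = -\lambda^2 h(t), \qquad \pi_2''(t) = h(t)\lambda\bigl[2\mu - \lambda(1 + \mu t)\bigr].
\end{equation*}
The guiding intuition is the Poisson limit $p, q \to 0$, where $\lambda, \mu \to \phi$ and one sees cleanly that $\pi_1$ is concave and $\pi_r$ is convex for $r \geq 2$; the hypothesis $\zeta_1 \leq 2\zeta_2/3$ is then tight precisely to make $\zeta_1\pi_1'' + \zeta_2\pi_2'' \geq 0$ in the range $\phi t \leq 1/3$.

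For the $r = 1, 2$ combination, applying $\zeta_1 \leq 2\zeta_2/3$ to $\zeta_1 \pi_1'' + \zeta_2 \pi_2'' \geq 0$ and dividing through by $h(t)\lambda\phi\zeta_2$ reduces the inequality to
\begin{equation*}
\frac{2}{\beta} - \frac{5}{3\alpha} \geq \phi t, \qquad \alpha := \mu/\phi, \quad \beta := \lambda/\phi.
\end{equation*}
Using the power series $\mu = \sum_{s \geq 1}(k_p p^s + k_q q^s)$ and $\lambda = \sum_{s \geq 1}(k_p p^s + k_q q^s)/s$ one gets $\alpha \geq \beta \geq 1$ for all $p, q \in [0,1/2]$, with equality only in the Poisson limit. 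A direct monotonicity check (computing $\frac{d}{dp}[p/(-\ln(1-p))]$ and pairing it against $5/3$) shows that $2/\beta - 5/(3\alpha)$ is increasing in each of $p, q$ and attains its infimum as $p, q \to 0$, where it equals exactly $1/3$. This matches the hypothesis $\phi t \leq 1/3$ and uses $p, q \leq 1/2$ to keep $\alpha, \beta$ in the regime where the monotonicity check works.

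For $r \geq 3$, each $\pi_r$ is a polynomial of degree $r - 1$ in $t$ multiplied by $h(t)$, and a similar but lengthier differentiation yields $\pi_r''(t) \geq 0$ throughout $\phi t \leq 1/3$. The intuition is the Poisson form
\begin{equation*}
(\pi_r^{\text{Poisson}})''(t) = \phi^2 e^{-\phi t}\,\frac{(\phi t)^{r-2}}{(r-1)!}\bigl((r-1) - \phi t\bigr),
\end{equation*}
which is non-negative on $\phi t \leq r - 1$; since $\phi t \leq 1/3 \ll r - 1$ for $r \geq 3$, there is comfortable slack to absorb the binomial corrections coming from $p, q \leq 1/2$. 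Adding the terms gives $A''(t) = \zeta_1\pi_1'' + \zeta_2\pi_2'' + \sum_{r \geq 3}\zeta_r \pi_r'' \geq 0$.

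The main obstacle is the $r = 2$ step: the hypothesis $\zeta_1 \leq 2\zeta_2/3$ is exactly saturated at $\phi t = 1/3$ in the Poisson limit, so no slack is available, and establishing the monotonicity of $2/\beta - 5/(3\alpha)$ away from the Poisson limit requires exploiting the alternating-series gap between $\mu$ and $\lambda$ carefully. This is where the assumption $p, q \leq 1/2$ is essential, keeping $\alpha/\beta$ from drifting too quickly to preserve the bound; the higher-$r$ terms are easier because they sit far from the critical Poisson threshold.
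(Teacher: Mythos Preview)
Your handling of the $r=1,2$ combination is essentially the paper's own argument in different notation: the paper sets $a=\zeta_1/(\zeta_1+\zeta_2)$, $b=\mu/k$, $c=-\lambda/k$, computes the same second derivative of $\zeta_1\pi_1+\zeta_2\pi_2$, and bounds it by $(2/3 - 5a/3)b$ plus $O(\max\{p^2,q^2\})$ corrections, concluding from $a\le 2/5$. Your route via $\alpha=\mu/\phi$, $\beta=\lambda/\phi$ reaches the equivalent inequality $2/\beta - 5/(3\alpha)\ge \phi t$; the paper's coarse $O(p^2)$ absorption and your proposed monotonicity argument are just two ways of closing the same gap. (Your monotonicity claim does need care: $2/\beta$ \emph{decreases} in $p$ while $-5/(3\alpha)$ increases, so you must show the latter dominates, and $\alpha,\beta$ depend jointly on $p,q,k_p,k_q$ --- not on a single variable as your sketch suggests.)

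The real gap is the $r\ge 3$ part, which you dismiss as ``similar but lengthier'' with Poisson intuition. This is where the paper spends most of its effort, and its route is \emph{not} to differentiate $\pi_r$ directly. Instead it writes $\pi_r = \sum_{r'\ge r} D_{r'}$ with $D_{r'}=\sum_{i+j=r'} B_i(t)C_j(t)$, where $B_i(t) = \Pr[\bin(k_p t,p)=i]$ and $C_j(t) = \Pr[\bin(k_q t,q)=j]$, and shows each product $B_iC_j$ with $i+j\ge 2$ is convex on $\phi t\le 1/3$. For $i,j\ge 2$ this follows from $B_i,C_j$ being individually convex; the boundary cases $B_iC_0$, $B_0C_j$, $B_iC_1$, $B_1C_j$ each receive an explicit second-derivative check. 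The point of this product decomposition is that it separates the $p$-dependence from the $q$-dependence, so each verification is a one-parameter calculation. Your proposed direct route would have to differentiate $1 - h(t)P_{r-1}(t)$ with $P_{r-1}$ a degree-$(r{-}1)$ polynomial whose coefficients mix $p$ and $q$ through products of $\binom{k_pt}{i}\binom{k_qt}{j}$, and then control the sign of $-\lambda^2 P + 2\lambda P' - P''$ uniformly in $p,q\le 1/2$ and all $r\ge 3$. That may be doable, but neither the Poisson heuristic nor ``comfortable slack'' is a substitute for carrying it out; the two-parameter structure is exactly what makes the paper reach for the $B_iC_j$ factorization.
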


\begin{theorem}
\label{thm:increase}
If $\seed_ 1 < \seed_2$ then 
$\tc(\seed_1) \leq \tc(\seed_2)$. 
Further when $\zeta_1 \leq 2\zeta_2/3$, and for some constant $\beta>0$ (i) the expected degree $\eta \phi$ satisfies $\eta\phi \leq \sqrt{\beta n/k}$ and (ii) the fraction $\zeta_1$ of vertices with threshold $1$ satisfies $\zeta_1 \eta \phi \leq 1 - \beta$ 
then as $n \rightarrow \infty$ implies $\tc \geq \frac{\beta n}{2k(\phi \eta)^2} \rightarrow \infty$.
\end{theorem}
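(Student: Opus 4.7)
The plan is to extract both assertions from the convexity of $A$ provided by Theorem~\ref{thm:basic}. For the monotonicity $\tc(\seed_1) \leq \tc(\seed_2)$, observe that $f(\seed,t) = (n-\seed)A(t) - kt + \seed$ is convex in $t$ for each fixed $\seed$, so $\partial_t f(\seed,t) = (n-\seed)A'(t) - k$ is non-decreasing in $t$. Over the interval $[0, 1/(3\phi)]$ the minimizer $\tc(\seed)$ is therefore either the right endpoint or the smallest $t$ at which $A'(t) = k/(n-\seed)$. As $\seed$ increases, $k/(n-\seed)$ increases strictly, and the monotonicity of $A'$ forces this smallest $t$ to move rightward; boundary cases are also preserved. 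This gives $\tc(\seed_1) \leq \tc(\seed_2)$.

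For the lower bound on $\tc = \tc(\pthreshold)$, set $t_0 = \frac{\beta n}{2k(\phi\eta)^2} = \frac{\beta}{2\phi^2\eta}$. By convexity of $f(\pthreshold,\cdot)$, it suffices to verify $\partial_t f(\pthreshold,t_0) \leq 0$, because then $\partial_t f \leq 0$ throughout $[0,t_0]$ and the minimizer must lie in $[t_0, 1/(3\phi)]$. Using $n - \pthreshold \leq n$, the inequality reduces to $A'(t_0) \leq k/n = 1/\eta$. I would split $A'(t_0) = \zeta_1 \pi_1'(t_0) + \sum_{r \geq 2} \zeta_r \pi_r'(t_0)$. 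Condition (i) gives $\phi \leq \sqrt{\beta/\eta} = o(1)$, so $-k_p\ln(1-p) - k_q \ln(1-q) = \phi(1+o(1))$; differentiating $\pi_1(t) = 1 - (1-p)^{k_pt}(1-q)^{k_qt}$ then gives $\pi_1'(t) \leq \phi(1+o(1))$, and condition (ii) yields $\zeta_1 \pi_1'(t_0) \leq (1-\beta)(1+o(1))/\eta$. For $r \geq 2$, the factorial-moment bound $\pi_r(t) \leq (\phi t)^r/r!$ (obtained from $\Pr[X\geq r] \leq \E[\binom{X}{r}]$ together with $\binom{m}{j}\leq m^j/j!$ and Maclaurin's inequality on elementary symmetric polynomials) and its analogue for the derivative give $\pi_r'(t_0) \leq \phi(\phi t_0)^{r-1}/(r-1)! \cdot (1+o(1))$. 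With $\phi t_0 = \beta/(2\phi\eta)$, the $r=2$ contribution is $\zeta_2 \pi_2'(t_0) \leq \beta/(2\eta)\cdot(1+o(1))$, and higher-$r$ terms form a geometric-type tail of the same or smaller order. Summing, $A'(t_0) \leq (1-\beta/2+o(1))/\eta < 1/\eta$ for large $n$, which establishes $\tc \geq t_0$; the divergence $t_0 \to \infty$ is immediate from $(\phi\eta)^2 = o(\beta\eta)$, a rearrangement of condition (i).

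The main technical obstacle is arranging constants so that the estimate just barely closes: the factor $1/2$ in $t_0$ is precisely what lets the $r=2$ contribution $\beta/(2\eta)$ fit inside the slack $\beta/\eta = 1/\eta - (1-\beta)/\eta$ that condition (ii) leaves for the $r=1$ term, and this in turn requires the sharper estimate $-\ln(1-p) = p(1 + O(p))$ (legal because $\phi = o(1)$) rather than the crude $-\ln(1-p) \leq 2p$, together with careful control of the $r \geq 3$ tail under the bound $\phi t_0 = \beta/(2\phi\eta)$. A related subtlety is the sparse regime $\phi\eta \to 0$, where $\phi t_0 \to \infty$ and the higher-$r$ expansion degrades; there the claim is best read as $\tc \geq \min(t_0, 1/(3\phi))$, which still diverges since $\phi \to 0$ forces $1/(3\phi) \to \infty$.
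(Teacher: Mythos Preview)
Your monotonicity argument is essentially the paper's: both read $\tc(\seed)$ as the point where the convex function $(n-\seed)A$ has slope $k$, and observe this moves right as $k/(n-\seed)$ increases.

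For the lower bound on $\tc$, your route and the paper's diverge. The paper first builds an explicit convex majorant $\bar A(t)\ge A(t)$ by (a) shifting all the mass $\zeta_r$, $r\ge 3$, onto $\zeta_2$ and (b) applying Le~Cam's Poisson approximation to obtain the closed form $\bar A(t)=1-e^{-\phi t}-(1-\zeta_1)\phi t e^{-\phi t}+2\phi^2 t$. It then works entirely with $\bar A'$, which is trivially bounded by $\zeta_1\phi+\phi^2(t+2)$, and solves $k=(n-\pthreshold')\bar A'(t')$ for $t'$. Your argument bypasses the majorant and bounds $A'(t_0)$ directly via factorial-moment estimates on each $\pi_r$. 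The payoff of your approach is that it is more elementary (no Le~Cam, no auxiliary $\pthreshold'$); the cost is that you must control $\pi_r'$ rather than $\pi_r$, and your justification ``its analogue for the derivative'' is not an argument---an upper bound on $\pi_r$ says nothing about $\pi_r'$. The fix is short: write $\pi_r'(t)=-\sum_{j<r}D_j'(t)$ and use the telescoping identity (or the discrete picture ``adding one more Bernoulli trial pushes $X$ from $r-1$ to $\ge r$ with probability $\approx\phi D_{r-1}(t)$'') to get $\pi_r'(t)=\phi D_{r-1}(t)(1+O(p+\phi t))$; then the factorial-moment bound $D_{r-1}(t)\le\Pr[X\ge r-1]\le(\phi t)^{r-1}/(r-1)!$ closes the estimate exactly as you wrote it. With that patch your argument is complete (your handling of the boundary case $\phi t_0>1/3$ is also correct and matches what the paper's argument needs implicitly).
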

\begin{proof}({\bf Of Theorem~\ref{thm:increase}.})
We use Theorem~\ref{thm:basic} to prove $A(t)$ to be a convex
function. Note $A(0)=0$. Suppose that we could find another convex
function $\bar{A}(t)$ such that $\bar{A}(0)=0$ and for all $t\leq
1/(3\phi), \bar{A}(t) \geq A(t)$.

Define $\bar{f}(\varphi,t)=(n-\varphi)\bar{A}(t) -kt + \varphi$ and
let $\pthreshold' = \min_\seed \{ \seed | \forall \ t \leq 1/(3\phi),
\bar{f}(\seed,t) \geq 0 \}$. Note that $\pthreshold' \leq \pthreshold$
since for all $\seed$ we have $\bar{f}(\seed,t) \geq f(\seed, t)$. 

Now for a fixed seed $\varphi$, the value $\tc(\varphi)$ (extended to
the reals from integers) corresponds to the point where the slope of
$(n-\varphi)A(t)$ is $k$. Consider simultaneously the functions
$(n-\pthreshold')\bar{A}(t), (n-\pthreshold')A(t)$ and
$(n-\pthreshold)A(t)$ and the corresponding points where they are
tangent to a line with slope $k$. This is shown if
Figure~\ref{fig:bound}.

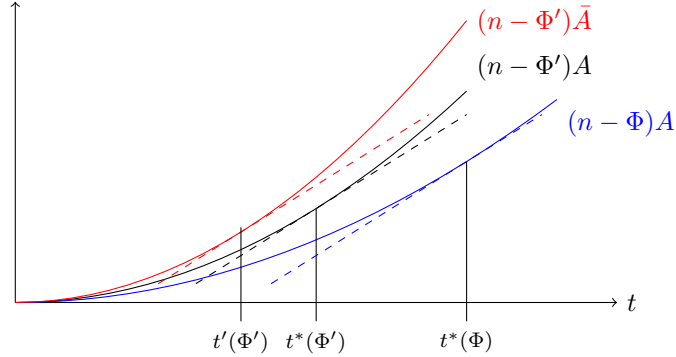
\begin{figure}[H]
\begin{center}
 \begin{tikzpicture}[xscale=0.4,yscale=0.5]
  \draw[->] (0,0) -- (20,0) node[right] {\small$t$};
  \draw[->] (0,0) -- (0,8) node[above] {\small$$};

  \draw[domain=0:15] plot (\x,{(\x*\x/40)}) 
    node[above right] {\small$(n-\pthreshold')A$};
  \draw[blue,domain=0:18] plot (\x,{(\x*\x/60)}) 
    node[below right] {\small$(n-\pthreshold)A$};
  \draw[red,domain=0:15] plot (\x,{(\x*\x/30)}) 
    node[right] {\small$(n-\pthreshold')\bar{A}$};
  \draw[dashed] (6,0.5) -- (15,5);
  \draw[-] (10,2.5) -- (10,-0.5) node[below] {\scriptsize $\tc(\pthreshold')$};
  \draw[red,dashed] (4.75,0.5) -- (13.75,5);
  \draw[-] (7.5,2) -- (7.5,-0.5) node[below] {\scriptsize $t'(\pthreshold')$};
  \draw[blue,dashed] (8.5,0.5) -- (17.5,5);
  \draw[-] (15,3.75) -- (15,-0.5) node[below] {\scriptsize $\tc(\pthreshold)$};
 \end{tikzpicture}
\end{center}
\caption{\label{fig:bound}Bounding $\tc$}
\end{figure}

Then $\tc(\pthreshold')$ is bounded below by $t'(\pthreshold')$ and 
$\tc(\pthreshold')$ is bounded above by $\tc(\pthreshold)$. Therefore if we prove that $t'(\pthreshold') \rightarrow \infty$ then $\tc=\tc(\pthreshold) \rightarrow \infty$ as well.
Observe that this observation also proves if $\seed_ 1 < \seed_2$ then 
$\tc(\seed_1) \leq \tc(\seed_2)$. 

\medskip
To choose $\bar{A}$ we first observe that if we increase $\zeta_2$ by $\Delta$ and decrease any $\zeta_r$ by $\Delta$ for any $r\geq 3$ then $A(t)$ does 
not decrease and continues to remain convex (note that we continue to satisfy the constraint involving $\zeta_1$). This implies that we can 
assume $\zeta_2=1-\zeta_1$ and let this new function be $A_1(t)$ and by 
construction $A_1(t) \geq A(t)$. Now
\begin{eqnarray*}
A(t) & \leq & A_1(t)  =  \zeta_1 \pi_1(t) + (1-\zeta_1) \pi_2(t) \\
&=& 1 - \Pr[\bin(k_p t, p) + \bin(k_q t, q)=0] - (1-\zeta_1) \Pr[\bin(k_p t, p) + \bin(k_q t, q)=1] \\
& \leq&  1 - e^{-\phi t} - (1-\zeta_1) \phi t e^{-\phi t} + 2p^2 k_p t + 2q^2 k_q t \qquad \mbox{(Le Cam's Theorem \cite{lecam})}
\end{eqnarray*}
where the last step follows from Le Cam's Theorem of approximating the sum of bernoulli distributions by a Poisson process. In this case we were summing $kt$ 
Bernoulli processes of which $k_p t$ had probability $p$ and $k_qt$ had probability $q$. Now $2p^2 k_p  + 2q^2 k_q \leq 2 \phi^2$ and therefore we set:

\[ \bar{A}(t) =  1 - e^{-\phi t} - (1-\zeta_1) \phi t e^{-\phi t} + 2\phi^2 t \]
It is immediate that $A(t) \leq \bar{A}(t)$ and 
\begin{eqnarray*}
\bar{A}'(t) & = &\zeta_1 \phi e^{-\phi t} + (1-\zeta_1) \phi^2 t e^{- \phi t} + 2\phi^2 \\
\bar{A}''(t) & = & \phi^2 e^{-\phi t} \left[ - \zeta_1 + (1-\zeta_1) - \phi t (1-\zeta_1) \right]
\end{eqnarray*}
Based on $\phi t \leq 1/3$ and $\zeta_1 \leq 2(1-\zeta_1)/3$ we have $\bar{A}''(t)\geq 0$ and $\bar{A}$ is convex. At $t=t'(\pthreshold')$
\[ k = (n-\pthreshold') \bar{A}'(t'(\pthreshold')) \leq (n-\pthreshold') \left[ \zeta_1 \phi +  \phi^2 (t'(\pthreshold')+2) \right] \]
Using $\zeta_1 \eta \phi \leq (1-\beta)$ and $\eta=n/k$ we get
\[ t'(\pthreshold') + 2 \geq \frac1{\phi^2} \left[ \frac{k}{n - \pthreshold'} - \frac{(1-\beta)k}{n} \right] = \frac1{\phi^2} \frac{\beta nk + (1-\beta) \pthreshold' k}{(n-\pthreshold')n} \geq \frac1{\phi^2} \frac{\beta nk}{n^2} = \frac{\beta n}{k(\eta\phi)^2}\]
The theorem follows.
\end{proof}

\begin{proof}({\bf Of Theorem~\ref{thm:basic}.}) We begin with some notation.

\begin{definition}
Define $B_i(t)=\Pr[ \bin(k_pt,p) = i]$, $C_j(t)=\Pr
[ \bin(k_qt,q) = j]$ and  $ D_r(t)=\Pr[ \bin(k_pt,p) + \bin(k_qt,q) = r]$. Observe that
\[ D_r(t) = \sum_{i=0}^r B_i(t)C_{r-i}(t) \]
Note $\pi_r(t) = \sum_{r' \geq r} D_{r'}(t)$.
\end{definition}

Let $c_1=\ln(1-p)$. Note $c_1<0$ and when $p\leq1/2$  
$c_1= -p - \frac{p^2}{2} - \frac{p^3}{3} \cdots \geq -3p/2$. Now $B_i(t)={ k_p t \choose i} p^i (1-p)^{k_p t-i}$ and 

\begin{eqnarray*}
B'_i(t) & =& \frac{dB_i(t)}{dt} = { k_p t \choose i} p^i (1-p)^{k_p t-i}   \left[ c_1 + \sum_{j=0}^{i-1} \frac1{k_p t-j} \right] k_p\\
B''_i(t) & = & 
{ k_p t \choose i} p^i (1-p)^{k_pt-i}  \left\{ \left( c_1 + \sum_{j=0}^{i-1} \frac1{k_p t-j} \right)^2 - \sum_{j=0}^{i-1} \frac1{(k_p t-j)^2} \right \} k^2_p \\
 & = & 
 { k_p t \choose i} p^i (1-p)^{k_p t-i} \left\{ c_1^2 + 2c_1 \left( \sum_{j=0}^{i-1} \frac1{k_pt-j} \right) + \left( \sum_{j=0}^{i-1} \frac1{k_p t-j} \right)^2 - \sum_{j=0}^{i-1} \frac1{(k_p t-j)^2} \right \} k_p^2
 \end{eqnarray*}
The above implies that $B'_i(t) \geq 0$ for $i\geq 1$ and $k_p pt \leq \frac13$. Further for $i \geq 2$
 \begin{eqnarray*}
 B''_i(t)
 & \geq & 
 { k_p t \choose i} p^i (1-p)^{k_pt-i}  \left\{ c_1^2 + \frac{2c_1i}{k_pt-i+1} + \left( \sum_{j=1}^{i-1} \sum_{j'=0}^{j-1} \frac2{(k_pt-j)(k_pt-j')} \right) \right\}  k_p^2 \\
 & \geq & 
 { k_pt \choose i} p^i (1-p)^{k_pt-i}  \left\{ c_1^2 + \frac{2c_1i}{k_pt-i+1} + \frac{i(i-1)}{(k_pt-1)k_pt} \right\} k_p^2
 \end{eqnarray*}
 which is positive for $i \geq 2$ and $k_p c_1 t \geq -1/2$. By the exact same 
argument $C_j(t) \geq 0$ for $j \geq 1$ and $C_j''(t) \geq 0$ for all $j \geq 2$.
Now consider $B_i(t)C_0(t)$ for $i \geq 2$, let $c_2 = \ln (1-q)$.

\begin{eqnarray*}
\frac{d B_i(t)C_0(t)}{dt} & = &{ k_p t \choose i} p^i (1-p)^{k_p t-i}(1-q)^{k_q t}   \left[ k_p c_1 + \sum_{j=0}^{i-1} \frac{k_p}{k_p t-j} + k_q c_2 \right] \\
\frac{d^2 B_i(t)C_0(t)}{dt^2} & = & 
{ k_p t \choose i} p^i (1-p)^{k_pt-i} (1-q)^{k_q t} \left( \left[ k_p c_1 + \sum_{j=0}^{i-1} \frac{k_p}{k_p t-j} + k_q c_2 \right]^2 - \sum_{j=0}^{i-1} \frac{k^2_p}{(k_p t-j)^2} \right )
 \end{eqnarray*}
To prove that $\frac{d^2 B_i(t)C_0(t)}{dt^2} \geq 0$ for $i \geq 2$ it suffices to show that
\begin{eqnarray*}
\left[ k_p c_1 + \sum_{j=0}^{i-1} \frac{k_p}{k_p t-j} + k_q c_2 \right]^2 - \sum_{j=0}^{i-1} \frac{k^2_p}{(k_p t-j)^2} \geq 0
\end{eqnarray*}
The left hand side expands to
\begin{eqnarray}
\label{later001}
(k_p c_1 + k_q c_2)^2 + 2(k_p c_1 + k_q c_2)\sum_{j=0}^{i-1} \frac{k_p}{k_p t-j} 
+ \sum_{j=0}^{i-1} \frac{k_p}{k_p t-j} \left(\sum_{j'=0,j'\neq j}^{i-1} \frac{k_p}{k_p t-j'} \right)
\end{eqnarray}
The first term is positive and for $i\geq 2$ for any $j$ there exists a $j \neq j'$ and 
\begin{eqnarray}
\label{later002}
2(k_p c_1 + k_q c_2) + \frac{k_p}{k_p t-j'} \geq 2 \left( - \frac{3p}{2}k_p -  \frac{3q}{2}k_q \right) + \frac1t = \frac{1-3\phi t}{t} \geq 0
\end{eqnarray}
and therefore for $i\geq 2$ we have $\frac{d^2 B_i(t)C_0(t)}{dt^2}
\geq 0$. Note that the same argument holds for $B_0(t)C_j(t)$ for $j
\geq 2$ and $\frac{d^2 B_0(t)C_j(t)}{dt^2} \geq 0$ in that case as
well. Finally consider $B_i(t)C_1(t)$ for $i \geq 1$. 

\begin{eqnarray*}
B_i(t)C_1(t) & = & q k_q t { k_p t \choose i} p^i (1-p)^{k_p t-i}(1-q)^{k_q t-1}\\
\frac{d B_i(t)C_1(t)}{dt} & = & q k_q t { k_p t \choose i} p^i (1-p)^{k_p t-i}(1-q)^{k_q t-1}   \left[ k_p c_1 + \sum_{j=0}^{i-1} \frac{k_p}{k_p t-j} + k_q c_2 + \frac1t \right] \\
\frac{d^2 B_i(t)C_1(t)}{dt^2} & = & B_i(t)C_1(t)
 \left( \left[ k_p c_1 + \sum_{j=0}^{i-1} \frac{k_p}{k_p t-j} + k_q c_2 + \frac1t \right]^2 - \sum_{j=0}^{i-1} \frac{k^2_p}{(k_p t-j)^2} - \frac1{t^2}\right )
 \end{eqnarray*}
Using an expansion similar to Equation~\ref{later001} we wish to prove
\begin{eqnarray*}
\left(k_p c_1 + k_q c_2 + \frac1t \right)^2 + 2 \left(k_p c_1 + k_q c_2 + \frac1t \right)\sum_{j=0}^{i-1} \frac{k_p}{k_p t-j} 
+ \sum_{j=0}^{i-1} \frac{k_p}{k_p t-j} \left(\sum_{j'=0,j'\neq j}^{i-1} \frac{k_p}{k_p t-j'} \right) - \frac1{t^2}\geq 0
\end{eqnarray*}
But the left hand side is greater than
\begin{eqnarray*}
\left(k_p c_1 + k_q c_2 + \frac1t \right)^2 + 2 \left(k_p c_1 + k_q c_2 + \frac1t \right)\frac1t - \frac1{t^2}
\end{eqnarray*}
which rewrites to
\[ (k_p c_1 + k_q c_2)^2 + \frac2{t} \left( \frac1t + 2(k_p c_1 + k_q c_2) \right) \geq (k_p c_1 + k_q c_2)^2 + \frac2{t} \left( \frac1t - 3\phi \right) \geq 0 \]
But the term $k_q q \geq 0$ whereas $c_1, c_2 < 0$ and therefore by the exact same logic as in Equation~\ref{later002},
\begin{eqnarray*}
2(k_p c_1 + k_q c_2 + k_q q) + \frac{k_p}{k_p t-j'} \geq 2 \left( - \frac{3p}{2}k_p -  \frac{3q}{2}k_q \right) + \frac1t = \frac{1-3\phi t}{t} \geq 0
\end{eqnarray*}
Therefore $\frac{d^2 B_i(t)C_1(t)}{dt^2} \geq 0$ for $i\geq 1$. Likewise $\frac{d^2 B_1(t)C_j(t)}{dt^2} \geq 0$ for $j\geq 1$.

\medskip 
\noindent Now for any $D_{r'}(t)$ with $r'\geq 2$ observe that
\[ D_{r'}(t) = B_{r'}(t)C_{0}(t) + B_{r'-1}(t)C_1(t) + \left( \sum_{j=2}^{r'-2}  B_{r'-j}(t)C_{j}(t) \right) + B_{1}(t)C_{r'-1}(t) + B_0(t)C_{r'}(t)\]
The middle sum is a sum of product of convex functions and every other term is proven to be convex as above. Therefore $\pi_r(t) = \sum_{r'\geq r} D_{r'}(t)$ is convex in $t$ for $r\geq 2$.

\medskip
Therefore to show that $A(t)=\sum_{r=1}^{\rmax} \zeta_1 \pi_r(t)$ is convex, we can ignore the terms corresponding to $r\geq 3$. The term corresponding to $r=1$ is not convex however -- but we will show that
\[
\underline{A}(t)=\zeta_1 \pi_1(t) + \zeta_2 \pi_2(t)\]
is convex, which will complete the proof that $A(t)$ is convex. Set $a=\zeta_1/(\zeta_1+\zeta_2)$ and note

\begin{eqnarray*}
\frac{\underline{A}(t)}{\zeta_1 + \zeta_2}  & = & 1 - (1-p)^{k_pt}(1-q)^{k_qt} - (1-a)\left[ pk_pt(1-p)^{k_pt-1}(1-q)^{k_qt} +  (1-p)^{k_p t}qk_qt(1-q)^{k_qt-1}\right] \\
& = & 1 - (1-p)^{k_pt}(1-q)^{k_qt} - b (kt) (1-a) (1-p)^{k_pt}(1-q)^{k_qt}
\end{eqnarray*}
where
\[ b = \frac{\frac{pk_p}{1-p}+ \frac{qk_q}{1-q}}{k} \geq \frac{1}{k} \left( pk_p + qk_q \right) \qquad \mbox{and} \qquad 
b \leq \frac{1}{k} \left( pk_p + qk_q \right) + 2\max\{p^2,q^2\} = \phi/k + 2\max\{p^2,q^2\}
\]
Let $
(1-z)= \left( (1-p)^{k_p}(1-q)^{k_q} \right)^{1/k}$
which implies 
$(1-z)^{kt} = (1-p)^{k_pt}(1-q)^{k_qt}$. Let $c=\ln(1-z)$.
\[
c=\frac{1}{k} \left(k_p \ln (1-p) + k_q \ln (1-q) \right) \geq -\frac{1}{k} \left( pk_p + qk_q + p^2k_p + q^2 k_q \right)
\geq - b - \max\{p^2,q^2\} \]
now
\begin{eqnarray*}
\frac{\underline{A}(t)}{(\zeta_1 + \zeta_2)} & = & 1 - (1-z)^{kt} - b (kt) (1-a) (1-z)^{kt}\\
\implies \frac{\underline{A}'(t)}{(\zeta_1 + \zeta_2) k}& = & - c(1-z)^{kt} - b(1-a)(1-z)^{kt} - b(kt)c(1-a)(1-z)^{kt} \\
\implies \frac{\underline{A}''(t)}{(\zeta_1 + \zeta_2) k^2} 
& = & - c^2(1-z)^{kt} - 2bc(1-a)(1-z)^{kt} - c^2b(kt)(1-a)(1-z)^{kt}
\end{eqnarray*}
Therefore
\begin{eqnarray}
\frac{\underline{A}''(t)}{-c (1-z)^{kt} (\zeta_1 + \zeta_2) k^2} &  = &  c + 2(1-a)b +
cb(kt)(1-a) \nonumber
\\
& \geq & c + 2(1-a)b +
\phi t c(1-a) + 2ckt(1-a)\max\{p^2,q^2\} \qquad \mbox{(since $ c<0$)} \nonumber\\
&\geq & 2(1-a)b + \left(\frac43 - \frac13 a\right)c + ck(1-a)\frac{\max\{p^2,q^2\}}{2\phi} \quad \mbox{(since $\phi t\leq\frac13, c<0$)} \nonumber \\
& \geq & \left(\frac23 - \frac{5a}{3}\right)b - \left(\frac43 - \frac13 a\right) \max\{p^2,q^2\} + ck(1-a)\frac{\max\{p^2,q^2\}}{2\phi} 
\label{eqn:0001}
\end{eqnarray}

Observe that all the terms involving $p^2,q^2$ are $o(b)$ and $\underline{A}'' \ge 0$ when $a \leq 2/5$ which is true when $\zeta_1\leq 2\zeta_2/3$. Therefore $A(t)$ is convex.
\end{proof}

\subsubsection{Proof of Lemma~\ref{lem:pirlemma}}
\label{proof:pirlemma}

\begin{nlemma}{\ref{lem:pirlemma}}
For any $p\geq q, x\geq 1$ and any $t\geq 4r$ with $\phi xt \leq 1/3$, we have $\pi_r( xt) \leq 3\left( \frac{4x}{3(1-p)} \right)^r \pi_r(t)$
and if $3x(1-p)>4$ then
$\pi_r(t) \leq 4\left(\frac{4}{3x(1-p)}\right)^r \pi_r(xt)$.
\end{nlemma}

\begin{proof}
Consider an $r' \geq r, x\geq 1$, and $xt$ is an integer $\leq 1/(3\phi)
$\begin{eqnarray}
& & \Pr[ \bin(k_ptx,p)+\bin(k_qtx,q) = r']  \nonumber \\
&& \qquad
=\sum_{i=0}^{r'} \left({{k_pxt} \choose i} p^i (1-p)^{xtk_p-i} \right)
\left( {{k_qxt} \choose {r' - i}} q^{r'-i} (1-q)^{xtk_q-r'+i} \right) \nonumber \\
& & \qquad \leq 
\sum_{i=0}^{r'} \left( \frac{(xk_pt)^i}{i!} p^i (1-p)^{xtk_p-i} \right)
\left( x^{r'-i} \frac{(xk_qt)^{r'-i}}{(r'-i)!} q^{r'-i} (1-q)^{xtk_q-r'+i} \right) \\
& & \qquad = \sum_{i=0}^{r'}\left(\frac{\left(\frac{xpk_pt}{1-p} \right)^i}{i!}\frac{\left(\frac{xqk_qt}{1-q}\right)^{r'-i}}{(r'-i)!} \right) (1-p)^{xtk_p} (1-q)^{xtk_q} \nonumber \\
& & \qquad \leq \sum_{i=0}^{r'} \left(\frac{\left((1-p)^{-1}xpk_pt\right)^i}{i!}\frac{\left((1-p)^{-1}xqk_qt\right)^{r'-i}}{(r'-i)!} \right)
(1-p)^{xtk_p} (1-q)^{xtk_q} \nonumber \\
& & \qquad = \frac{\left( \frac{\phi xt}{1-p} \right)^{r'}}{r'!} (1-p)^{xtk_p} (1-q)^{xtk_q}  \label{eqn:a01}
\end{eqnarray}
In the case $y\geq 4r$ note that ${y \choose r} \geq (y-r)^r/r! \geq (3y/4)^r/r!$.
Thus in the case $xt \geq 4r$ we get
\begin{eqnarray}
& & \Pr[ \bin(k_ptx,p)+\bin(k_qtx,q) = r]  \nonumber 
=\sum_{i=0}^{r} \left({{k_pxt} \choose i} p^i (1-p)^{xtk_p-i} \right)
\left( {{k_qxt} \choose {r - i}} q^{r-i} (1-q)^{xtk_q-r+i} \right) \nonumber \\
& & \qquad \geq 
\sum_{i=0}^{r} \left( \frac{(xk_pt)^i}{(\frac43)^i i!} p^i (1-p)^{xtk_p} \right)
\left( \frac{(xk_qt)^{r-i}}{(\frac43)^{r-i} (r-i)!} q^{r-i} (1-q)^{xtk_q} \right) \nonumber \\
& & \qquad = \frac{(\phi x t)^r}{(\frac43)^r r!} (1-p)^{xtk_p} (1-q)^{xtk_q} \label{eqn:a02}
\end{eqnarray}

Therefore when $3xt\phi \leq 1$, we have:

\begin{eqnarray}
\frac{(\phi x t)^r}{(\frac43)^r r!} (1-p)^{xtk_p} (1-q)^{xtk_q} & \leq &  
\Pr[ \bin(k_ptx,p)+\bin(k_qtx,q) = r]  \mbox{(Equation~\ref{eqn:a02}.)} \nonumber \\
& \leq & \pi_r(xt) \label{later1001}\\
& \leq & \sum_{r' \geq r} \Pr[ \bin(k_ptx,p)+\bin(k_qtx,q) = r'] \nonumber \\
& \leq & \sum_{r' \geq r} \frac{\left(\frac{\phi xt}{1-p}\right)^{r'}}{r'!} (1-p)^{xtk_p} (1-q)^{xtk_q}  \mbox{(Equation \ref{eqn:a01}.)} \nonumber \\
& \leq & \frac{\left(\frac{\phi xt}{1-p} \right)^{r}}{r!} (1-p)^{xtk_p} (1-q)^{xtk_q} \sum_{r' \geq r} \left(\frac{\phi xt}{1-p}\right)^{r'-r} \nonumber \\
& \leq & \left(\frac{3}{2-3p} \right) \frac{\left(\frac{\phi xt}{1-p}\right)^{r}}{r!} (1-p)^{xtk_p} (1-q)^{xtk_q} \nonumber 
\end{eqnarray}
Summarizing the above we get for $x\geq 1, xt \geq 4r$:
\begin{eqnarray}
\label{eqn:a03}
\frac{(\phi x t)^r}{(\frac43)^r r!} (1-p)^{xtk_p} (1-q)^{xtk_q} \leq \pi_r(xt) \leq \left(\frac{3}{2-3p}\right) \frac{\left(\frac{\phi xt}{1-p}\right)^{r}}{r!} (1-p)^{xtk_p} (1-q)^{xtk_q}
\end{eqnarray}

But this immediately implies that (using Equation~\ref{later1001} in the second part):
\[ \pi_r(xt) \leq \left(\frac{3}{2-3p}\right) \left( \frac{4x}{3(1-p)} \right)^r \frac{(\phi t)^{r}}{(\frac43)^r r!} (1-p)^{tk_p} (1-q)^{tk_q} \leq 2\left( \frac{4x}{3(1-p)} \right)^r \pi_r(t) \]
and if $3x(1-p) > 4$,
\begin{eqnarray*} 
\pi_r(t) & \leq&  \left(\frac{3}{2-3p} \right) \frac{\left(\frac{\phi t}{1-p}\right)^{r}}{r!} (1-p)^{tk_p} (1-q)^{tk_q}  \leq 2\left( \frac{4}{3x(1-p)} \right)^r \frac{(\phi xt)^{r}}{(\frac43)^r r!} (1-p)^{tk_p} (1-q)^{tk_q} \\
& \leq & 2\left( \frac{4}{3x(1-p)} \right)^r \pi_r(xt)
(1-p)^{-(x-1)tk_p} (1-q)^{-(x-1)tk_q} \\
& \leq  & 2\left( \frac{4}{3x(1-p)} \right)^r \pi_r(xt)e^{2(x-1)tpk_p} e^{2(x-1)tqk_q} \qquad \mbox{(Since $e^{2y} \geq \frac1{1-y}$ for $y \in [0,\frac12]$.)} \\
& \leq & 2\left( \frac{4}{3x(1-p)} \right)^r \pi_r(xt) e^{2(x-1)\phi t} \leq 3\left( \frac{4}{3x(1-p)} \right)^r e^{2/3} \pi_r(xt) \\
& \leq & 4  \left( \frac{4}{3x(1-p)} \right)^r \pi_r(xt) 
\end{eqnarray*}
\noindent The lemma follows.
\end{proof}

\subsubsection{Proof of Lemma~\ref{lem:StSclose}}
\label{proof:StSclose}
\begin{lemma}
For any fixed $r>0$, define the stochastic processes
\begin{equation*}
\xi(t) = \frac{S_r^i(t) - \E [S_r^i(t)]}{1 - \pi_r(t)} \qquad \xi_{rev}(t) = \frac{S_r^i(t) - \E [S_r^i(t)]}{\pi_r(t)}
\end{equation*}
$\xi(t)$ is a martingale (i.e., $\E [\xi(t+1)] = \xi(t)$ for all $t$)
and $\xi_{rev}$ is a reverse martingale
(i.e., $\E [\xi_{rev}(t-1)] = \xi_{rev}(t)$ for all $t$)
.
\label{lem:martingale}
\end{lemma}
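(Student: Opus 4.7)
My plan is to decompose $S_r^i(t)$ into a sum of independent per-vertex indicators, establish the (reverse) martingale property at the per-vertex level by a direct two-case computation, and then sum over vertices.

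Concretely I would write $S_r^i(t) = \sum_u X_u(t)$, where the sum ranges over non-seeded threshold-$r$ vertices $u$ in cluster $i$ and $X_u(t) = \mathbf{1}[u\ \text{has at least}\ r\ \text{contagious neighbors at time}\ t]$. Independence of edges in the TM graph makes the processes $\{X_u(\cdot)\}_u$ independent across $u$, each marginally Bernoulli$(\pi_r(t))$, so the total sum is $\bin(\eta_r - \seed^i_r, \pi_r(t))$ as stated in Definition~\ref{def:sdef}. The key structural property I will exploit is that $X_u(t)$ is monotone non-decreasing in $t$: once $u$ acquires $r$ contagious neighbors, it stays that way. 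This monotonicity immediately gives the transition identity
\[ \Pr[X_u(t+1) = 1 \mid X_u(t) = 0] \;=\; \frac{\pi_r(t+1) - \pi_r(t)}{1 - \pi_r(t)}, \]
which drives both martingale computations.

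Let $\mathcal{G}_t = \sigma(X_u(s) : s \leq t,\ u)$ be the coarse filtration that tracks only the threshold-crossing events. For $Y_u(t) := (X_u(t) - \pi_r(t))/(1 - \pi_r(t))$, I would do a case split on $X_u(t)$. When $X_u(t) = 1$, monotonicity forces $X_u(t+1) = 1$, so both $Y_u(t+1)$ and $Y_u(t)$ equal $1$. When $X_u(t) = 0$, plugging the transition probability above into the definition of $Y_u(t+1)$ collapses $\E[Y_u(t+1) \mid \mathcal{G}_t]$ to $-\pi_r(t)/(1-\pi_r(t)) = Y_u(t)$. Since the $Y_u$ are independent across $u$, $\xi(t) = \sum_u Y_u(t)$ inherits the martingale property.

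For the reverse martingale $Z_u(t) := (X_u(t) - \pi_r(t))/\pi_r(t)$, the symmetric ingredient is the Bayes-flipped identity $\Pr[X_u(t) = 1 \mid X_u(t+1) = 1] = \pi_r(t)/\pi_r(t+1)$, again an immediate consequence of monotonicity. A case split on $X_u(t+1)$ gives $\E[Z_u(t) \mid \mathcal{H}_{t+1}] = Z_u(t+1)$, where $\mathcal{H}_t = \sigma(X_u(s) : s \geq t,\ u)$ is the decreasing filtration; summing across the independent $u$'s yields the reverse-martingale property of $\xi_{rev}$.

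The step that requires the most care is the choice of filtration. If instead of $\mathcal{G}_t$ one uses the richer filtration that reveals the exact number $N_u(t)$ of contagious neighbors (not merely whether $N_u(t) \geq r$), the martingale property fails: the conditional probability of crossing the threshold between time $t$ and $t+1$ depends strongly on whether $N_u(t) = r-1$ or $N_u(t) < r-1$. The coarser $\mathcal{G}_t$ is exactly the right object, and it is still rich enough to support the Doob $L_2$ maximal inequality that will be invoked in the proof of Lemma~\ref{lem:StSclose}.
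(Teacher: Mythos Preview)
Your proposal is correct and follows essentially the same route as the paper: decompose $S_r^i(t)$ into per-vertex indicators $X_u(t)=\mathbbm{1}[Y_u\le t]$ (the paper writes this via the infection time $Y_u$), verify the (reverse) martingale identity for a single vertex by a two-case split using exactly the transition probabilities you wrote down, and sum. Your explicit discussion of the filtration $\mathcal{G}_t$ versus the finer $\sigma(N_u(s):s\le t)$ is a point the paper leaves implicit, but otherwise the arguments are the same.
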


\begin{proof}
Let $V^i_r$ be the set of vertices in cluster $i$ with threshold $r$ that were not seeded. For $u \in V^i_r$, let $Y_u$ be the time at which $u$ becomes infected (set $Y_u = \infty$ if $u$ never becomes infected). Then $\pi_r(t) = \Pr[Y_u \leq t]$ and $S_r^i(t) = \sum_u \mathbbm{1}[ Y_u \leq t]$ where $\mathbbm{1}[]$ is the indicator function. The 
terms $\mathbbm{1}[ Y_u \leq t]$ are independent and identically distributed, so it suffices to show $\xi_u$ is a martingale, where

\begin{equation}
\xi_u(t) = \frac{ \mathbbm{1}[Y_u \leq t] - \Pr[Y_u \leq t]}{1 - \Pr[Y_u \leq t]} = 1 - \frac{ \mathbbm{1}[Y_u > t]}{1 - \Pr[Y_u \leq t]}
= 1 - \frac{\mathbbm{1}[Y_u > t]}{1 - \pi_r(t)}
\end{equation}

If $Y_u \leq t$, then $\xi_u(t) =\xi_u(t+1) =1$ and $\xi_u$ is a martingale. If $Y_u > t$,

\begin{equation*}
\xi_u(t+1) = \left\{ \begin{array}{ll}
\frac{-\pi_r(t+1)}{1 - \pi_r(t+1)} &\mbox{ if } Y_u > t+1 \qquad \Pr[Y_u > t+1 \mid Y_u > t] = \frac{1 - \pi_r(t+1)}{1 - \pi_r(t)} \\
1 &\mbox{ if } Y_u = t+1 \qquad \Pr[Y_u = t+1 \mid Y_u > t] = \frac{\pi_r(t+1) - \pi_r(t)}{1 - \pi_r(t)}
\end{array}\right.
\end{equation*}
and $\E [\xi_u(t+1)] = \xi_u(t)$, so $\xi_u$ is a martingale. Summing over all $u$, $ \xi(t)= \sum_u  \xi_u(t)$ is also a martingale. 
To show $\xi_{rev}$ is a reverse martingale, it suffices to show $\xi_v$ is reverse martingale, where

\newcommand{\xirev}{\xi_{rev}}

\begin{equation}
\xi_v(t) = \frac{ \mathbbm{1}[Y_u \leq t] - \Pr[Y_u \leq t]}{\Pr[Y_u \leq t]} =\frac{ \mathbbm{1}[Y_u \leq t]}{\Pr[Y_u \leq t]} - 1
= \frac{\mathbbm{1}[Y_u \leq t] - \pi_r(t)}{\pi_r(t)}
\end{equation}
If $Y_u > t$, then $\xi_v(t) = \xi_v(t-1) = -1$ and $\xi_v$ is a reverse martingale. If $Y_u \leq t$.
\begin{equation*}
\xi_v(t-1) = \left\{ \begin{array}{ll} \frac{1-\pi_r(t-1)}{\pi_r(t-1)} &\mbox{ if } Y_u  \leq t-1 \qquad \Pr[Y_u \leq t-1 \mid Y_u \leq t] = \frac{\pi_r(t-1)}{\pi_r(t)} \\
-1 &\mbox{ if } Y_u = t \qquad \Pr[Y_u = t \mid Y_u \leq t] = \frac{\pi_r(t) - \pi_r(t-1)}{\pi_r(t)}
\end{array} \right.
\end{equation*}
and $\E [\xi_v(t-1) ]= \xi_v(t)$, so $\xi_v$ is a reverse martingale and so is $\sum_v \xi_v$.
\end{proof}

\begin{lemma}
For any fixed $r>0$ and any $t_0$, 
\begin{equation*}
\E \left[ \left(\sup_{0< t \leq t_0} | S^i_r(t) - \E[ S^i_r(t)]| \right)^2 \right] \leq 16 \eta \zeta_r \pi_r(t_0) 
\end{equation*}
\label{lem:SESclose}
\end{lemma}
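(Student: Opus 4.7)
The plan is to apply Doob's $L^2$ maximal inequality to the martingale $\xi(t)$ and the reverse martingale $\xi_{rev}(t)$ already constructed in Lemma~\ref{lem:martingale}. The key translation is that
\[
S_r^i(t) - \E[S_r^i(t)] \;=\; \xi(t)\bigl(1-\pi_r(t)\bigr) \;=\; \xi_{rev}(t)\,\pi_r(t),
\]
so a bound on $\sup |\xi|$ or $\sup |\xi_{rev}|$ translates directly into a bound on the quantity of interest. I will work throughout conditional on $\eta_r$ (the random number of threshold-$r$ vertices in cluster $i$), for which $S^i_r(t_0)\sim\bin(\eta_r-\seed^i_r,\pi_r(t_0))$ and hence $\Var(S^i_r(t_0))\le \eta_r\pi_r(t_0)(1-\pi_r(t_0))$; at the very end I integrate over $\eta_r$ using $\E[\eta_r]\le\zeta_r\eta$.

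I would split $[0,t_0]$ at the first time $t_\ast\le t_0$ at which $\pi_r(t_\ast)\ge 1/2$ (taking $t_\ast=t_0$ if $\pi_r(t_0)\le 1/2$). On $[0,t_\ast]$ the denominator $1-\pi_r$ is at least $1/2$, so the forward martingale is the right tool: Doob's inequality gives $\E\bigl[\sup_{t\le t_\ast}\xi(t)^2\bigr]\le 4\E[\xi(t_\ast)^2]=4\Var(S^i_r(t_\ast))/(1-\pi_r(t_\ast))^2$, which is bounded by $8\eta_r\pi_r(t_\ast)\le 8\eta_r\pi_r(t_0)$. Using $|S^i_r-\E S^i_r|\le |\xi|$ on this subinterval yields $\E\bigl[\sup_{t\le t_\ast}|S^i_r(t)-\E S^i_r(t)|^2\bigr]\le 8\eta_r\pi_r(t_0)$.

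On $[t_\ast,t_0]$ (only relevant when $\pi_r(t_0)>1/2$) the denominator $\pi_r$ is at least $1/2$, so the reverse martingale takes over. Doob's inequality for reverse martingales gives $\E\bigl[\sup_{t_\ast\le t\le t_0}\xi_{rev}(t)^2\bigr]\le 4\E[\xi_{rev}(t_\ast)^2]\le 4\eta_r(1-\pi_r(t_\ast))/\pi_r(t_\ast)\le 4\eta_r$. Using $|S^i_r-\E S^i_r|=|\xi_{rev}|\pi_r(t)\le|\xi_{rev}|\pi_r(t_0)$ we get the contribution $\le 4\eta_r\pi_r(t_0)^2\le 4\eta_r\pi_r(t_0)$. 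Adding the two sup-squared contributions gives $12\eta_r\pi_r(t_0)\le 16\eta_r\pi_r(t_0)$, and taking expectation over $\eta_r$ yields the stated bound $16\eta\zeta_r\pi_r(t_0)$.

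The only delicate point is the case split: without it the naive bound $4\Var(S^i_r(t_0))/(1-\pi_r(t_0))^2$ from the forward martingale alone blows up as $\pi_r(t_0)\to 1$, and the whole reason both a martingale and a reverse martingale were introduced in Lemma~\ref{lem:martingale} is to let us switch whichever of $\pi_r$ or $1-\pi_r$ is bounded away from zero into the numerator of the variance estimate. Everything else is a routine application of Doob's $L^2$ inequality and the elementary identity $\max(a^2,b^2)\le a^2+b^2$ for combining the two pieces of the supremum.
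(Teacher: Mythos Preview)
Your approach is essentially identical to the paper's: split the time interval at the point where $\pi_r$ crosses $1/2$, apply Doob's $L^2$ inequality to the forward martingale $\xi$ on the early part and to the reverse martingale $\xi_{rev}$ on the late part, and add the two contributions. The paper's bookkeeping differs only in constants (it lands on $8+8=16$ rather than your $8+4$) and in handling the expectation over $\eta_r$ up front rather than at the end.

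One small boundary slip: you define $t_\ast$ as the \emph{first} time $\pi_r(t_\ast)\ge 1/2$, and then assert that $1-\pi_r$ is at least $1/2$ on all of $[0,t_\ast]$ to get the bound $4\Var(S^i_r(t_\ast))/(1-\pi_r(t_\ast))^2\le 8\eta_r\pi_r(t_\ast)$. At $t_\ast$ itself this fails, and in discrete time $\pi_r$ can overshoot $1/2$. The paper avoids this by letting $t_1$ be the \emph{last} integer with $\pi_r(t_1)\le 1/2$, applying the forward martingale on $[0,t_1]$ and the reverse martingale from $t_1+1$ onward (where $\pi_r(t_1+1)>1/2$). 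With that one-step shift your argument goes through verbatim.
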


\begin{proof}
Let $\eta^i_r,\seed^i_r$ be the number of vertices in cluster $i$ with threshold $r$ and the vertices (with threshold $r$) which were seeded. 
Note that for a fixed $\eta^i_r,\seed^i_r$, $\Var_{\mbox{\tiny fixed } \eta^i_r,\seed^i_r} [S_r^i(t)] = (\eta_r - \seed^i_r) \pi_r(t) (1 - \pi_r(t)) < \eta^i_r \pi_r(t)$. Therefore taking the expectation over $\eta_r$ we get $\Var [S_r^i(t)] \leq \eta \zeta_r \pi_r(t)$.

Define $\xi$ and $\xi_{rev}$ to be the martingales from Lemma~\ref{lem:martingale}.
We will be using Doob's $L^p$ maximal inequality which states that for a Martingale $M(t)$, $p>1$ and any $\tau \geq 1$, 
\[ \E\left[  \left(\sup_{t \leq \tau} | M(t) | \right)^p \right] \leq \left(\frac{p}{p-1}\right)^p \E\left[  | M(\tau) |^p \right]
\]

We will use the inequality for $p=2$. We break the proof into two cases.

\textbf{Case I $\pi_r(t_0) \leq 1/2$, $t \leq t_0$.} We apply Doob's Inequality on $\xi$ with $\tau=t_0$ to get
\begin{equation}
\E \left[ \left(\sup_{t \leq t_0} | S^i_r(t) - \E [S^i_r(t)] | \right)^2
\right] \leq \E [ \sup_{t \leq t_0} | \xi(t)|^2] \leq 4 \E [| \xi(t_0) |^2 ]= 4 \frac{Var [S_r^i(t_0)]}{(1-\pi_r(t_0))^2} \leq 8 \eta \zeta_r \pi_r(t_0)
\label{eqn:mart1}
\end{equation}

\textbf{Case II $\pi_r(t_0) > 1/2$, $t \leq t_0$.} Observe that $\pi_r(0)=0$ and $\pi_r(t)$ is monotonic nondecreasing. Let $t_1$ be the largest integer such that $\pi_r(t_1) \leq 1/2$. Then using exactly the same argument as in Equation~\ref{eqn:mart1} we have

\begin{equation}
\E \left[ \left(\sup_{t \leq t_1} | S^i_r(t) - \E [S^i_r(t)] | \right)^2 \right] \leq 8 \eta \zeta_r \pi_r(t_1) \leq 8 \eta \zeta_r \pi_r(t_0)
\label{eqn:mart11}
\end{equation}

\noindent 
We use $\xi_{rev}$ to get
\begin{equation}
\E \left[ \left(\sup_{t \geq t_1+1} | S^i_r(t) - \E [S^i_r(t)]| \right)^2 \right] \leq 4 \frac{\Var[S^i_r(t_1+1)]}{\pi_r(t_1+1)^2} \leq 8 \eta\zeta_r (1 - \pi_r(t_1+1))\pi_r(t_1+1) \leq 8 \eta \zeta_r \pi_r(t_0)
\label{eqn:mart2}
\end{equation}

\noindent
Since 
\begin{equation*}
\E \left[ \left(\sup_{t \geq 0} | S^i_r(t) - \E [S^i_r(t)]| \right)^2 \right] \leq
\E \left[ \left(\sup_{t \leq t_1} | S^i_r(t) - \E[ S^i_r(t)]| \right)^2 \right] + 
\E \left[ \left(\sup_{t \geq t_1 + 1} | S^i_r(t) - \E[S^i_r(t)]| \right)^2 \right]
\end{equation*}
\noindent
And we apply Equations~\ref{eqn:mart11} and \ref{eqn:mart2} to get that
\begin{equation}
\E \left[ \left(\sup_{t \geq 0} | S^i_r(t) - \E [S^i_r(t)]| \right)^2 \right] \leq 8 \eta \zeta_r \pi_r(t_0) + 8 \eta \zeta_r (1 - \pi_r(t_0)) < 8 \eta \zeta_r < 16 \pi_r(t_0) \eta \zeta_r
\label{eqn:mart4}
\end{equation}

\noindent 
The lemma follows.
\end{proof}

\noindent We can now prove the main result of this subsubsection.

\begin{nlemma}{\ref{lem:StSclose}}
Let $t_0=\frac{96}{(1-p)^2\beta^2} \tc$ and suppose $\tc\rightarrow \infty$ as 
$n\rightarrow\infty$.
For $q\leq p\leq1/2$ and all fixed $\gamma,\beta>0$ if $n \geq n(\beta,\gamma,k)$ which is  sufficiently large, with probability at least $1-c_0/(\gamma^2 \beta^{2\rmax} t^*)$ for some
absolute constant $c_0$, simultaneously for all $i$,
\begin{equation*}
\sup_{1 \leq t \leq t_0} \left | S^i( t) - \left(\eta - \frac{\seed}{k} \right) A(t) \right| \leq \gamma  \tc
\end{equation*}
\end{nlemma}

\begin{proof}
Applying Lemma~\ref{lem:SESclose} 
and Lemma~\ref{lem:pirlemma} we get:

\begin{equation}
\E \left[\sup_{1 \leq t \leq t_0} \left | S_r^i(t) - \E \left[ S_r^i(t)\right] \right |^2 \right] = 16 \eta \zeta_r \pi_r(t_0) \leq 48 \left(\frac{128}{(1-p)^3\beta^2}  \right)^r \eta\zeta_r\pi_r(\tc)) \label{secondeqn}
\end{equation}

\noindent We then use the triangle inequality over $r = 1 \dots \rmax$, 
\begin{eqnarray*}
\E \left[ \sup_{1 \leq t \leq t_0} \left | S^i(t) - \E \left[ S^i(t)\right] \right |^2 \right] \leq 48 \left(\frac{128}{(1-p)^3\beta^2}  \right)^{\rmax} \sum_{r=1}^{\rmax} \eta\zeta_r\pi_r(\tc) \leq 48 \left(\frac{128}{(1-p)^3\beta^2}  \right)^{\rmax} \frac{n}{k} A(\tc) 
\end{eqnarray*}
\noindent But $nA(\tc) \leq \tc$ (Definition~\ref{def:maindef}) and $\tc \rightarrow \infty$, thus for sufficiently large $n$,
therefore for $\delta=c_0/(\gamma^2\beta^{2\rmax}\tc)$:
\begin{eqnarray*}
\E \left[ \sup_{1 \leq t \leq t_0} \left | S^i(t) - \E \left[ S^i(t)\right] \right |^2 \right] 
\leq  48 \left(\frac{128}{(1-p)^3\beta^2}  \right)^{\rmax} \frac{\tc}{k} 
\leq \frac{\delta \gamma^2 (\tc)^2}{2k}
\end{eqnarray*}

\noindent
Therefore using Markov inequality, for every $i$ with probability $1-\delta/(2k)$ we have
\[ \sup_{0 \leq t \leq t_0} \left | S^i(t) - \E \left[ S^i(t)\right] \right |^2 \leq \gamma^2 (\tc)^2 \]
\noindent
The lemma follow from the union bound over all $i$ and taking the square root.
\end{proof}

\subsubsection{Proof of Lemma~\ref{lem:pastbottleneck1}}
\label{proof:pb1}
\begin{nlemma}{\ref{lem:pastbottleneck1}}
When the seed size is $\overline{\seed}=(1+\epsilon)\pthreshold$, $\epsilon\leq 1/9$ and we have reached $t=\min\{\frac{96}{(1-p)^2\beta^2}\tc,1/(3\phi)\}$ then with
probability $1-O(\frac1{\tc})$, $S^i(t) > t-a$ for all $t \in [\frac{96}{(1-p)^2\beta^2}\tc, 1/(3\phi)]$, i.e., as $n$ increases and $\tc \rightarrow \infty$ the percolation continues till $t=1/(3\phi)$.
\end{nlemma}

\begin{proof}
Let $F^i(t) = \sum_{r=1}^{\rmax} \left(\bin(\eta_r-\overline{\seed}^i_r,
\pi_r(t)) + \overline{\seed}^i_r \right)$. Recall that $\eta_r$ is the number of
vertices with threshold $r$ and $\seed^i_r$ are the number of seeded
vertices with threshold $r$ in cluster $i$. Note that $\seed=\overline{\seed}$. 
We will show $F^i(t) >
(1+\epsilon)t$ for all $t$ with probability $1-O(\frac1{\tc})$.
Since the percolation has proceeded to at least $2\tc/\beta$ observe that
\[ (n - \overline{\seed})A(2\tc/\beta) + \overline{\seed} \geq 2k\tc/\beta \]
Expanding $A$
\[ (n- \overline{\seed})\zeta_1 \pi_1(2\tc/\beta) + (n - \overline{\seed}) \sum_{r=2}^{\rmax} \zeta_r \pi_r(2\tc/\beta) \geq 2k\tc/\beta - \overline{\seed}
\]
Now for $t\geq 1$, we have $\pi_1(t) = 1 - (1-p)^{k_pt}(1-q)^{k_qt} \leq 1 - (1 - pk_pt)(1-qk_qt) \leq pk_pt + qk_qt = t\phi$ (note $(1-p)^z \geq 1 - pz$ for all $z\geq 1$). 
Moreover $\zeta_1 \phi\eta =\zeta_1 \phi n/k \leq (1-\beta)$,
\begin{eqnarray*}
2k\tc/\beta - \overline{\seed}  & \leq & n \zeta_1 \phi 2\tc/\beta +  (n-\overline{\seed}) \sum_{r=2}^{\rmax} \zeta_r \pi_r(2\tc/\beta) \\
& \leq & (1-\beta)2k\tc/\beta +  (n-\overline{\seed}) \sum_{r=2}^{\rmax} \zeta_r \pi_r(2\tc/\beta)
\end{eqnarray*}
But $\pthreshold \leq k\tc$ (Definition~\ref{def:maindef}) and $\overline{\seed} \leq (1+\epsilon)k\tc \leq 10k\tc/9$ therefore 
\begin{equation}
\label{later003} 
(n-\overline{\seed}) \sum_{r=2}^{\rmax} \zeta_r \pi_r(2\tc/\beta) \geq 8k\tc/9 
\end{equation}
\noindent 
Let $t_j = 2z\tc/\beta$. Note $z \geq \frac{48}{(1-p)\beta}$. We use Lemma~\ref{lem:pirlemma},
\begin{eqnarray*}
\E[F^i(t_j)] & \geq & 
\sum_{r=1}^{\rmax} (\eta - \overline{\seed}/k) \zeta_r \pi_r(t_j) \geq  \frac1k(n - \overline{\seed}) \sum_{r=2}^{\rmax} \zeta_r \pi_r(t_j) \\
& \geq & \frac{9(1-p)^2z^2}{64} \frac1k(n - \overline{\seed}) 
\sum_{r=2}^{\rmax} \zeta_r \pi_r(2\tc/\beta) \qquad \mbox{(Using Lemma~\ref{lem:pirlemma})}\\
& \geq & \frac{9 (1-p)^2 z^2}{64} \frac{8\tc}{9} \qquad \mbox{(Using Equation~\ref{later003}.)}\geq 3t_j
\end{eqnarray*}

\noindent
We apply Chebyshev's inequality, note $\Var[ F^i(t)] \leq \E[F^i(t)]$.
\begin{equation*}
\Pr[F(t_j) \leq 2t_j] \leq \Pr \left[ F(t_j) \leq \frac23 \mathbb{E}[F(t_j)] \right] 
\leq \frac{\Var[F(t_j)]}{((1/3) E[F(t_j)])^2} \leq \frac{9}{\E[ F(t_j)]} 
\leq 3/t_j
\end{equation*}

\noindent
We now use union bounds over the intervals -- note that the 
sum of the probabilities of stopping in each interval telescopes to a total of $O(1/\tc)$.
The lemma follows.
\end{proof}

\subsubsection{Proof of Lemma~\ref{lem:closer}}
\label{proof:closer}
\begin{nlemma}{\ref{lem:closer}}
If the percolation has continued till $t=(3\phi)^{-1}$ then with
probability $1-1/n^{\Omega(1)}$, the percolation does not stop till a constant fraction of the graph is infected. Moreover if the expected degree $\phi \eta$ is a slowly growing function then with probability $1-1/n^{\Omega(1)}$, the percolation does not stop till $\eta -o(\eta)$ nodes are infected.
\end{nlemma}

\begin{proof}
The analysis corresponds to several different intervals.

\begin{statement}
For all $t \in [(3\phi)^{-1}, c_2 \eta]$, $S^i(t) > c_2 \eta > t$, where $c_2$ is some small constant.
\end{statement}

We can pessimistically pretend that every vertex has been assigned
threshold $\rmax$, and the probability that a vertex is healthy is at
most $\pi_{\rmax}(t) \geq \pi_{\rmax}(\underline{t})$ when
$\underline{t}=1/(3\phi)$.  Consider  $t \geq 4r_m$ and since $p,q \leq 1/2$,
\begin{eqnarray*}
\pi_{\rmax}(\underline{t}) & \geq & 
 \sum_{i=0}^{\rmax} {k_p \underline{t} \choose i} {k_q \underline{t} \choose {\rmax-i}} p^iq^j (1-p)^{k_p \underline{t} - i}(1-q)^{k_q \underline{t} - \rmax+i} \\
& \geq & \sum_{i=0}^{\rmax} \frac1{i!}\frac1{(\rmax-i)!} \left( \frac{3k_p \underline{t}}{4}\right)^i 
\left( \frac{3k_q \underline{t}}{4}\right)^{\rmax-i} p^i q^i e^{-2p k_p \underline{t}}e^{-2q k_q \underline{t}} = \left( \frac{3\phi \underline{t}}{4}\right)^{\rmax}\frac1{\rmax!} e^{-2\phi\underline{t}} =c_1
\end{eqnarray*}

\noindent for some constant $c_1 > 0$. This implies that $\E[\bin(\eta-\seed,
\pi_{\rmax}(t)]+\seed > c_1 \eta$. An application of Chernoff bounds
provides a $c_2$ such that $\bin(\eta, c_1) > c_2 \eta$ with
probability $1 - 1/n^{\Omega(1)}$. This implies that once
$(3\phi)^{-1}$ nodes are infected, in the very next generation a
constant fraction of the cluster is infected.
In the remainder we will try to bound the number of nodes who {\bf do
not get infected} and show that the number is small. This part is
identical to the proof in \cite{gnp}; because the analysis now
switches to the vertices who continue to survive -- and that analysis
does not depend on the threshold of infection. The first part of that 
analysis is:

\begin{statement}
For all $t \in [c_2 \eta, \eta - c_4/\phi)]$, for some constant $c_3$ the percolation does not stop with probability $1 - 1/\eta^{\Omega(1)}$.
\end{statement}

\noindent The probability of being healthy is bounded above
$1 - \pi_{\rmax}$, irresprective of the threshold. 
\begin{eqnarray}
1-\pi_{\rmax}(t) & = & 
\sum_{r=0}^{\rmax-1} \sum_{i=0}^{r} {k_pt \choose i} {k_qt \choose {r-i}} p^iq^j (1-p)^{k_pt - i}(1-q)^{k_qt - r+i} \nonumber \\
& \leq & \sum_{r=0}^{\rmax-1} \sum_{i=0}^r \frac1{i!}\frac1{(r-i)!} \left(k_pt\right)^i 
\left(k_qt\right)^{r-i} p^i q^i e^{-p k_pt +ip}e^{-q k_qt +(r-i)q} \nonumber \\
& \leq & e^{\rmax(p+q)} \sum_{r=0}^{\rmax-1} 
\left(\phi t \right)^{r} \frac1{r!} e^{-\phi t} 
=  \frac{ e^{\rmax(p+q)} e^{-\phi t}}{\phi t} \sum_{r=0}^{\rmax-1} 
\left(\phi t \right)^{r+1} r \frac1{(r+1)!} \label{later008}\\
& \leq & \frac{ \rmax  e^{\rmax(p+q)} e^{-\phi t}}{\phi t} e^{\phi t} = 
\frac{ \rmax  e^{\rmax(p+q)} }{\phi t} \leq 
\frac{ \rmax  e^{\rmax(p+q)} }{c_2 \phi \eta} = \frac{c_3}{\phi \eta} \nonumber
\end{eqnarray}
for some absolute constant $c_3$. Now the expected number of remaining
healthy vertices is $(\eta - t)(1-\pi_{\rmax}(t))$ which is $c_3/\phi$
in expectation. Therefore we can again apply Chernoff bound to prove that the percolation proceeds to $n - c_4/\phi$ nodes with probability $1- 1/\eta^{\Omega(1)}$ (Note $c_4 > c_3$.)

\begin{statement}
If $\phi\eta$ is a slowly growing function $\gg \rmax$ then the
percolation does not stop in the range for $t \in [\eta - c_4/\phi,
\eta - o(\eta)]$ with probability $1 - 1/\eta^{\Omega(1)}$.
\end{statement}

\noindent We reuse Equation~\ref{later008} once more and get
\[ 1- \pi_{\rmax}(t) \leq  e^{\rmax(p+q)} \sum_{r=0}^{\rmax-1} \frac{(\phi t)^r}{r!} e^{- \phi t} \leq  e^{\rmax(p+q)} \sum_{r=0}^{\rmax-1} \frac{(\eta\phi - c_4)^r}{r!} e^{- \eta \phi + c_4} = o(1)\]

\noindent The lemma follows.
\end{proof}

\section{Intervention Strategies}
\label{sec:intervention}
Recall from Definition~\ref{def:tmgraph} that for a vertex $u$ in a Templated Multisection graph, $v$ is near $u$ if $u$ and $v$ are connected with probability $p$ and $v$ is far from $u$ if $u$ and $v$ are connected with probability $q$.

\begin{definition}[Set of Healthy Vertices]
For a fixed generation $\tau$, we define $\HH$ to be the set of healthy vertices and define $\HH(r)$ to be the set of healthy vertices with threshold $r$. We define $\HH_a$ to be the set of healthy vertices with exactly $a$ infected neighbors. When $G$ is a Templated Multisection graph, we define $\HH_{b,c}$ to be the set of healthy vertices with exactly $b$ near infected neighbors and exactly $c$ far infected neighbors.
\end{definition}

\begin{definition}[Set of Infected Vertices]
We define $\I(\tau)$ to be the set of infected vertices at generation $\tau$ and define $\I(\tau-1)$ similarly.
\end{definition}

\begin{lemma}[Proved in Section~\ref{proof:HH}]
We can calculate $\Pr[u \in \HH_a \mid u \in \HH, r(u)=r]$ and $\Pr[u \in \HH_{b,c} \mid u \in \HH, r(u)=r]$ using $\I(\tau)$ and $\I(\tau-1)$. Additionally, if $\I(\tau) < k/(3 \phi)$, then $\Pr[u \in \HH_{a+1}] < (2/3) \Pr[u \in \HH_a]$.
\label{lem:HH}
\end{lemma}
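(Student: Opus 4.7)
The plan is to expose the conditional distribution of $u$'s incident edges given the percolation history $(\I(s))_{s \leq \tau}$ and then reduce everything to convolution plus log-concavity. The key setup step, which I would establish first, is that for a healthy $u$ with threshold $r$, the history constrains only the edges from $u$ into $\I(\tau-1)$, and only through the single inequality ``total count less than $r$.'' The reason is that $u \notin \I(s)$ for every $s \leq \tau$, so $u$'s incident edges cannot contribute to any other vertex's infection count at steps $\leq \tau$; hence the edges $(u,v)$ with $v \notin \I(\tau-1)$ (i.e., $v \in \I(\tau)\setminus\I(\tau-1)$ or $v \in \HH$) retain their original $\mathrm{Bern}(p)$ or $\mathrm{Bern}(q)$ law independently of the history. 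Partitioning $u$'s edges by near/far $\times$ old/new yields four independent binomials $X_1 \sim \bin(n_{p,old},p)$, $X_2 \sim \bin(n_{q,old},q)$, $X_3 \sim \bin(n_{p,new},p)$, $X_4 \sim \bin(n_{q,new},q)$, whose parameters are read off from $\I(\tau)$, $\I(\tau-1)$, and $\chi(u)$. Writing $Y_1 := X_1+X_2$, $Y_2 := X_3+X_4$, $Y := Y_1+Y_2$, the event $\{u \in \HH\}$ given $r(u)=r$ is exactly $\{Y_1 < r\}$, so
\begin{equation*}
\Pr[u \in \HH_a \mid u \in \HH, r(u)=r] \;=\; \frac{\sum_{y_1=0}^{r-1}\Pr[Y_1 = y_1]\,\Pr[Y_2 = a-y_1]}{\Pr[Y_1 < r]},
\end{equation*}
and an analogous convolution (further split by the near/far totals $X_1+X_3$, $X_2+X_4$) gives $\Pr[u \in \HH_{b,c}\mid\cdots]$. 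This delivers the computability claim.

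For the ratio claim, factor $\Pr[u \in \HH_a \mid r(u)=r] = \Pr[Y=a]\cdot\Pr[Y_1 < r \mid Y=a]$ and bound the two factors separately. With $t := |\I(\tau)|/k$ and $n_p := n_{p,old}+n_{p,new}=k_p t$, $n_q := n_{q,old}+n_{q,new}=k_q t$, a direct calculation yields $\Pr[Y=1]/\Pr[Y=0] = n_p p/(1-p) + n_q q/(1-q) \leq \phi t/(1-p)$; the hypothesis $|\I(\tau)| < k/(3\phi)$ forces $\phi t < 1/3$, and $p \leq 1/2$ then makes this ratio $< 2/3$. Since $Y$ is a sum of independent Bernoullis its mass function is log-concave, so $\Pr[Y=a+1]/\Pr[Y=a]$ is non-increasing in $a$ and the $<2/3$ bound extends to every $a$. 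For the second factor, log-concavity of $Y_2$ (also a sum of Bernoullis) makes $\Pr[Y_2=a+1-y_1]/\Pr[Y_2=a-y_1]$ non-decreasing in $y_1$, which is the monotone likelihood ratio property for $(Y_1 \mid Y=a)$; therefore $Y_1 \mid Y=a$ is stochastically non-decreasing in $a$, and $\Pr[Y_1 < r \mid Y=a]$ is non-increasing in $a$. Multiplying the two factor bounds gives $\Pr[u \in \HH_{a+1} \mid r(u)=r] < (2/3)\Pr[u \in \HH_a \mid r(u)=r]$ for every $r$, and averaging with weights $\zeta_r$ yields the unconditional statement.

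The main obstacle is pinning down the setup step rigorously: one must show that conditioning on the entire history $(\I(s))_{s \leq \tau}$ really does leave $u$'s edges outside $\I(\tau-1)$ unconstrained. This uses the monotonicity $\I(0) \subseteq \cdots \subseteq \I(\tau)$ together with the observation that an edge $(u,v)$ with $v \notin \I(\tau-1)$ plays no role in the percolation up to time $\tau$ from either endpoint---not from $v$'s side because $u$ stays healthy throughout, and not from $u$'s side because the constraint on $u$'s own state is already captured by the edges into $\I(\tau-1)$. Everything after that step is convolution bookkeeping plus the standard log-concavity/MLR calculus.
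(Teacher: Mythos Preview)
Your proposal is correct. For the computability half you follow exactly the paper's route: split $u$'s infected neighbours into those in $\I(\tau-1)$ (whose total is constrained to be $<r$) and those in $\I(\tau)\setminus\I(\tau-1)$ (fresh independent binomials), and write the answer as a convolution with a conditioned-binomial first factor. Your ``setup step'' justification is the right one and is what the paper implicitly uses.

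For the ratio half you diverge from the paper. The paper simply writes $\Pr[u\in\HH_a]=D_a(t)$ with $t=|\I(\tau)|/k$ and then proves the recursion $D_{a+1}(t)<\phi t\,(1-\max\{p,q\})^{-1}D_a(t)$ by an elementary binomial-coefficient manipulation (their Lemma~\ref{lem:bcdhelper}); the conditioning on $u\in\HH$ (i.e.\ on $Y_1<r$) is not addressed. You instead factor the conditional probability as $\Pr[Y=a]\cdot\Pr[Y_1<r\mid Y=a]/\Pr[Y_1<r]$, bound the first factor via log-concavity of a Bernoulli sum reduced to the $a=0$ case (which recovers the same $2/3$ bound as the paper's direct recursion), and control the second factor by the MLR/stochastic-dominance argument. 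Your route is slightly more abstract but strictly more rigorous: it actually delivers the inequality for the conditional quantity $\Pr[u\in\HH_a\mid u\in\HH,r(u)=r]$ that is later plugged into the formulas for $j_1,j_2$, whereas the paper's identification $\Pr[u\in\HH_a]=D_a(t)$ silently drops the $\{Y_1<r\}$ constraint. The paper's argument buys simplicity (no log-concavity or MLR needed, just a one-line binomial identity); yours buys correctness of the conditioning and transfers without change to any threshold distribution.
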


\begin{ntheorem}{\ref{thm:interventionsuperthm}}
Assume $n,p,q,r,\lambda$, $\I(\tau)$, $\I(\tau-1)$, $\HH(r)$ are known and that $\I(\tau) < k/(3 \phi)$. Given either $\zeta(r)$ (for \bolster ), $z'$ (for \delay ), or $\alpha_p$ and $\alpha_q$ (for \diminish\ and \sequester ), it is possible to determine whether the intervention is successful and $G$ becomes infected with probability $1-1/poly(n)$.
\end{ntheorem}
\par

\medskip
Note that it is virtually impossible to apply an intervention when exactly $\lambda n$ vertices are infected, for example the case where $\I(10)=.95 \lambda n$ and $\I(11)=1.05 \lambda n$. As a result, small changes in $\lambda$ may have no impact on whether the intervention is successful or not; the true determining factor is the size of $\I(\tau)$ and $\I(\tau+1)$.

Also consider two different graphs $G_1$ and $G_2$ and let $\I_i(t)$ denote the number of infected vertices in $G_i$ at time $t$. If $|\I_1(\tau)| < |\I_2(\tau)|$ and $|\I_1(\tau+1)-\I_1(\tau)| < |\I_2(\tau+1)-\I_2(\tau)|$ then $|\I_1(\tau+1)| < |\I_2(\tau+1)|$ and every intervention that successfully stops the percolation on $G_2$ will also stop the percolation on $G_1$. However if $|\I_1(\tau+1)| < |\I_2(\tau+1)|$ but $|\I_1(\tau+1)-\I_1(\tau)| > |\I_2(\tau+1) - \I_2(\tau)|$, there is no guarantee that an intervention that stops the percolation on $G_1$ stops the intervention on $G_2$. See also the discussing about Figure~\ref{fig:bolster3d}.

\subsection{\bolster\ and \delay\ Intervention}
\label{sec:bolster}

We begin by formally defining \bolster\ intervention.

\begin{definition}[\bolster\ intervention]
Define the intervention generation $\tau$ to be $\tau = \min_t |\I(t+1)| > \lambda n$ and for every $r$, let $\zeta'(r)$ be a distribution on $[r, \dots, r'_m]$. Every non-infected vertex with threshold $r$ will be assigned a new threshold from distribution $\zeta'(r)$.

For the first $\tau-1$ generations, we run the standard bootstrap percolation process. Then the sequence of events are:

\begin{enumerate}\parskip=0in
\item Generation $\tau$ begins. Every vertex counts its infected neighbors. Every vertex with $r(u)$ or more infected neighbors becomes infected. Note that $|\I(\tau)| < \lambda n$.
\item Generation $\tau+1$ begins. Every vertex counts its infected neighbors. Every vertex $u$ with $r(u)$ or more infected neighbors becomes infected.
\item Every vertex $u$ with less than $r(u)$ infected vertices is assigned a new threshold from distribution $\zeta'(r(u))$. Note that $|\I(\tau+1)| > \lambda n$. 
\end{enumerate}
\end{definition}

\begin{definition}[\delay\ intervention]
Delay intervention is a special case of \bolster\ intervention where $\zeta'_j(r) = (1-z'(u))^{j-r} z'(u)$.
\end{definition}

Note that with this definition \bolster\ intervention cannot save a vertex that is about to become infected; if $u$ has $r(u)$ infected neighbors when the intervention is applied it still becomes infected. Our results focus on the definition above, at the end of the section we describe how to modify the results using either of the following alternate definitions.

\begin{modification}
\bolster\ intervention is allowed to save vertices. In the definition above, when generation $\tau+1$ begins, every vertex is assigned a new threshold from $\zeta'$ before checking whether vertices become infected; essentially we swap steps 2 and 3 in the definition.
\label{mod1}
\end{modification}

Modification~\ref{mod1} is substantially stronger than our definition of \bolster\, it is in fact so strong that it is difficult to generate interesting simulation data (even the weakest interventions are always successful).

\begin{modification}
\bolster\ intervention is allowed to weaken vertices, and $\zeta'(r)$ is allowed to be a distribution on $[2, r'_m]$ instead of $[r, r'_m]$.
\label{mod2}
\end{modification}

If $\zeta'(r)$ is any distribution on $[r, r'_m]$, our analysis holds. If $\zeta'(r)$ is allowed to be a distribution on $[2, r'_m]$, then badly chosen $\zeta'$ may lead to problems (as discussed in the end of the section). One example of a badly chosen intervention is $\zeta'(r) = r-1$, the `intervention' that reduces every vertex's threshold by 1.

We now begin the proof of Theorem~\ref{thm:interventionsuperthm} for \bolster\ , i.e. determining whether \bolster\ stops the spread of percolation. We construct a new graph $J$ of the same 'type` as $G$ but with $|\HH|$ vertices. We will choose thresholds for the vertices of $J$ so that the probability $J$ becomes infected is equal to the probability $G$ becomes infected. For every $u \in \HH_a$, let $r'(u)$ denote the new threshold of $u$. $u$ becomes infected when it has $r'(u)-a$ infected neighbors in $\HH$ to go along with its $a$ infected neighbors in $\I(\tau)$. Thus, we will add a vertex $v$ to $J$ with threshold $r'(u)-a$. $v$ becomes infected when it has $r'(u)-a$ neighbors in $J$. In this way, we encode the information about $\I(\tau)$ into the thresholds of $J$, and the probability $u \in \HH$ becomes infected is equal to the probability that $v \in J$ becomes infected.

For example, $v$ has threshold $2$ if $r'(u) = 2$ and $u$ had zero infected neighbors  or $r'(u) = 3$ and $u$ had one infected neighbors -or- $r'(u)=4$ and $u$ had two infected neighbors and so on.

Formally, let $G = TM(F, n, k_p, k_q, p, q)$ and $J = (F, |\HH|, k_p, k_q, p, q)$. Recall we can calculate $\Pr[u \in \HH_a \mid r(u)=r]$ using Lemma~\ref{lem:HH}. We now define $j_s$ and $\seed$ as follows.

\begin{equation}
j_s = \sum_{r=2}^{r_m} \sum_{a=0}^{r} \frac{|\HH(r)|}{|\HH|} \Pr[u \in \HH_a \mid r(u)=r] * \mathbf{1}[a < r] * \zeta'_{s+a}(r) \qquad \seed = |\HH| - |\HH| \sum_{s=1}^{r'_m} j_s
\label{eqn:jsseed}
\end{equation}

We will let $j_1, j_2, \dots, j_{r'_m}$ be the distribution used to assign thresholds and $\seed$ will be the number of seed vertices.  We then use Theorem~\ref{thm:mainthm} to determine whether $J$ becomes infected. If $J$ becomes infected with polynomially high probability, then $G$ also becomes infected and with that same probability and the intervention is not successful. If $J$ does not become infected, then the intervention is successful.

In order to apply Theorem~\ref{thm:mainthm}, we need to confirm that $j_1 < (2/3) j_2$. Note that

\begin{eqnarray*}
j_1 &=& \sum_{r=2}^{r_m} \frac{|\HH(r)|}{|\HH|} \Pr[u \in \HH_{r-1} \mid r(u)=r] \zeta'_{r}(r) \\
j_2 &=& \sum_{r=2}^{r_m} \frac{|\HH(r)|}{|\HH|} \big( \Pr[u \in \HH_{r-2} \mid r(u)=r] \zeta'_{r}(r) +
\Pr[u \in \HH_{r-1} \mid r(u)=r] \zeta'_{r+1}(r) \big)
\end{eqnarray*}

By Lemma~\ref{lem:HH}, $\Pr[u \in \HH_{r-1}] < (2/3) \Pr[u \in \HH_{r-2}]$. This immediately implies that $j_1 < (2/3) j_2$ and that Theorem~\ref{thm:mainthm} applies.
To use Modification~\ref{mod1}, remove the $\textbf{1}[a<r]$ part of Equation~\ref{eqn:jsseed}. To use Modification~\ref{mod2}, no changes to Equation~\ref{eqn:jsseed} are necessary. However, note that with Modification~\ref{mod2}, $j_1$ is not guaranteed to be less than $(2/3)j_2$; depending on the distribution $\zeta'$, it might be the case that $j_1 > (2/3) j_2$. At this point, Theorem~\ref{thm:mainthm} no longer applies.

\subsection{\diminish\  and \sequester\ Intervention}

We begin by formally defining \diminish\ intervention, which corresponds to the idea of deleting edges randomly.

\begin{definition}[\diminish\ intervention]
Define the intervention generation $\tau$ to be $\tau = \min_t |E(t)| > \lambda n$. Let $\zeta'$ be a distribution of new vertex thresholds. For the first $\tau-1$ generations, we run the standard bootstrap percolation process. Then the sequence of events are:

\begin{enumerate}\parskip=0in
\item Generation $\tau-1$ begins. Every vertex counts its infected neighbors. Every vertex with $r(u)$ or more infected neighbors becomes infected.
\item Generation $\tau$ begins. Delete that edge connecting two near vertices with probability $1-\alpha_p$. Delete every edge connecting two far vertices with probability $1-\alpha_q$. After edges are deleted, every vertex counts its infected neighbors. Every vertex with $r(u)$ infected neighbors in $G'$ becomes infected.
\end{enumerate}
\end{definition}

\begin{definition}[\sequester\ intervention]
\sequester\ intervention is defined similarly to \diminish\, but edges connecting two healthy vertices are never deleted. Edges connected to an infected vertices are deleted with probability $1-\alpha_p$ or $1-\alpha_q$.
\end{definition}

Note that unlike \bolster\  intervention, we do allow \diminish\ to `save' vertices about to be infected. 
Let $G'$ be the post-intervention graph after edges are deleted and define $\HH'_a$ to be the set of healthy vertices that have exactly $a$ infected vertices in $G'$. Define $\HH'_{b,c}$ similarly. When $G$ is a Erdos-Reyni graph, we get

\begin{eqnarray*}
\Pr[u \in \HH'_a \mid r(u)=r, u \in \HH] = \sum_{d=a}^\infty \Pr[\bin(d, \alpha)=a] \Pr[u \in \HH_a \mid r(u)=r, u \in \HH]
\end{eqnarray*}

When $G$ is a TM graph, we get

\begin{eqnarray*}
\Pr[u \in \HH'_{b,c} \mid r(u)=r, u \in \HH] = \sum_{d=b}^\infty \sum_{e=c}^\infty 
\Pr[\bin(d, \alpha_p) = b] \Pr[ \bin(e, \alpha_q) = c] \Pr[u \in \HH_{b,c} \mid r(u)=r, u \in \HH]
\end{eqnarray*}

and $\Pr[u \in \HH'_a] = \sum_{b+c=a} \Pr[u \in \HH'_{b,c}]$.
The remainder of the analysis is very similar to the \bolster\ case, but using $\HH'_a$ instead of $\HH_a$.  We construct a new graph $J$, and for every $u \in \HH'_a$, we add a vertex $v$ to $J$ with threshold $r(u)-a$. $J$ will be a TM graph with $\alpha_p p$ and $\alpha_q q$ edge probability instead of $p$ and $q$. Formally, let $G = TM(F, n, k_p, k_q, p, q)$ and $J = (F, |\HH|, k_p, k_q, \alpha_p p, \alpha_q q)$. 

\begin{equation*}
j_s = \sum_{r=2}^{r_m} \sum_{a=0}^{r} \frac{|\HH(r)|}{|\HH|} \Pr[u \in \HH'_a \mid r(u)=r] * \mathbf{1}[a < r]  \qquad \seed = |\HH| - |\HH| \sum_{s=1}^{\rmax} j_s
\end{equation*}

We then use Theorem~\ref{thm:mainthm} on $J$ to determine whether the intervention is successful or not. For \sequester\ intervention, we define $J = (F, |\HH|, k_p, k_q, p, q)$ but use the same distribution of $j_s$.

\subsection{Proof of Lemma~\ref{lem:HH}}
\label{proof:HH}

As a warp up, we first consider the easier case where $G$ is an \er\ graph and $r(u)$ is known.

\setcounter{statement}{0}

\begin{statement}
When $G$ is an \er\ graph, $\Pr[ u \in \HH_a \mid u \in \HH, r(u)=r]$ can be estimated using $\I(\tau)$ and $\I(\tau-1)$.
\end{statement}

\begin{proof}
We define $B_i(x)=\Pr[ \bin(x,p) = i]$. For any sets $S_1$ and $S_2$, define $\sigma(S_1, S_2)$ to be the number of edges connecting $S_1$ and $S_2$. We begin by breaking $\Pr[u \in HH_a]$ into its component parts. For the remainder of the proof, we will suppress the conditional $u \in \HH, r(u)=r$ in the interest of space; all probabilities are conditioned on knowing $u \in \HH$, and $r(u)=r$.

\begin{eqnarray*}
\Pr[u \in \HH_a \mid r(u)=r] &=& \Pr[ \sigma(u, \I(\tau)) = a] \\
&=& \sum_{d=0}^{r-1}  \Pr[ \sigma(u, \I(\tau-1)) = d]  \Pr[ \sigma(u, \I(\tau)-\I(\tau-1)) = a-d]
\end{eqnarray*}

If $u \in \HH$, $u$ cannot be connected to $r$ or more vertices in $\I(\tau-1)$; if $u$ was connected to $r$ or more vertices, $u$ would belong to $\I(\tau)$ and it would not belong to $\HH$. We can capture this constraint with a conditional binomial.

\begin{eqnarray*}
\Pr[ \sigma(u, \I(\tau -1))=d] = \frac{B_d(|\I(\tau -1)|)}{\sum\limits_{i=1}^{r-1} B_{i}(|\I(\tau -1)|)}
\end{eqnarray*}

In contrast, if $v \in \I(\tau)-\I(\tau-1)$, then $u$ and $v$ are connected with probability $p$ independent of the other edges, so

\begin{equation*}
\Pr[ \sigma(u, \I(\tau) - \I(\tau-1))=d-a] = B_{d-a}(|\I(\tau) - \I(\tau-1)|)
\end{equation*}

The statement follows from the last three equations. 
\end{proof}

We now consider the case where $G$ is a Templated Multisection graph. Conceptually, the ideas underlying the proof are identical to the \er\ graph.

\begin{statement}
When $G$ is a Templated Multisection graph, $\Pr[u \in \HH_a \mid u \in \HH, r(u)=r]$ and $\Pr[ u \in \HH_{b,c} \mid u \in \HH, r(u)=r]$ can be estimated using $\I(\tau)$ and $\I(\tau-1)$.
\end{statement}

\begin{proof}

We define $B_i(x)=\Pr[ \bin(x,p) = i]$ and $C_i(x) = \Pr[ \bin(x,q)=i]$ . Define $\sigma(S_1, S_2)$ to be the number of edges connecting $S_1$ and $S_2$. For a fixed vertex $u$, we break $\I(t)$ into its two component parts.

\begin{eqnarray*}
\I^{near}(\tau) = \I(\tau) \cap \{ v : v \mbox{ is near } u \} \qquad \I^{far}(\tau) = \I(\tau) \cap \{ v : v \mbox{ is far from } u \}
\end{eqnarray*}

and define similar expressions for $\tau-1$. We begin by breaking $\Pr[u \in \HH_{b,c}]$ into its component parts. For the remainder of the proof, we will suppress the conditional $r(u)=r$ in the interest of space; all probabilities are conditioned on knowing $r(u)=r$.

\begin{eqnarray*}
\Pr[ u \in H_{b,c} \mid r(u)=r] = \Pr[\sigma(u, \I^{close}(\tau)) = b \wedge \Pr[\sigma(u, \I^{far}(\tau)) = c]
\end{eqnarray*}

Note that these terms are not independent. 
\begin{eqnarray*}
&&\Pr[ \sigma(u, \I^{near}(\tau)) = b] = \sum_{d=0}^{b} \Pr[ \sigma(u,  \I^{near}(\tau -1)) = d] \Pr[ \sigma(u, \I^{near}(\tau) - \I^{near}(\tau -1)) = b-d]  \\
&&\Pr[ \sigma(u, \I^{far}(\tau)) = c] = \sum_{e=0}^{c} \Pr[ \sigma(u,  \I^{far}(\tau -1)) = e] \Pr[ \sigma(u, \I^{far}(\tau) - \I^{far}(\tau -1)) = c-e]  \\
\end{eqnarray*}

$\sigma(u, \I^{near}(\tau) - \I^{near}(\tau-1))$ is independent from the other terms, as is $\sigma(u, \I^{far}(\tau) - \I^{far}(\tau-1))$. Thus, we get

\begin{eqnarray*}
\Pr[u \in H_{b,c}] = \sum_{d=0}^b \sum_{e=0}^c \bigg( & \Pr[ \sigma(u, \I^{near}(\tau -1) = d \wedge \sigma(u, \I^{far}(\tau-1)) = e] \\
& *  \Pr[ \sigma(u, \I^{near}(\tau)-\I^{near}(\tau - 1))=b-d] \\
& * \Pr[ \sigma(u, \I^{far}(\tau) - \I^{far}(\tau-1)) = c-e] \bigg)
\end{eqnarray*}

\noindent Observe $\sigma(u, \I^{near}(\tau-1)) + \sigma(u, \I^{far}(\tau-1)) \leq r-1$. We can use a modified equation similar to the conditional binomial to get

\begin{eqnarray*}
\Pr[ \sigma(u, \I^{near}(\tau-1) = d) \wedge \sigma(u, \I^{far}(\tau-1) = e)] =
\frac{ B_d(|\I^{near}(\tau-1)|)*  C_e(|\I^{far}(\tau-1)|)}
{\sum\limits_{f+g \leq r-1}  B_f(|\I^{near}(\tau-1)|) * C_g(|\I^{far}(\tau-1)|)}
\end{eqnarray*}

\noindent 
Note that if $v \in \I^{near}(\tau)-\I^{near}(\tau-1)$, then $u$ and $v$ are connected with probability $p$ independent of all other edges (and similarly for $\I^{far}$ and $q$). Thus, 
\begin{eqnarray*}
 \Pr[ \sigma(u, \I^{near}(\tau) - \I^{near}(\tau-1)) = b-d] &=& B_{b-d}(|\I^{near}(\tau) - \I^{near}(\tau-1)|) \\
 \Pr[ \sigma(u, \I^{far}(\tau) - \I^{far}(\tau-1)) = c-e] &=& C_{c-e}(|\I^{far}(\tau) - \I^{far}(\tau-1)|) \\
\end{eqnarray*}

Combining the previous equations gives the formula for $\Pr[ u \in \HH_{b,c}]$. The first part of the statement follows from $\Pr[u \in \HH_a] = \sum_{b+c=a} \Pr[u \in \HH_{b,c}]$.
\end{proof}

We now begin working for the second part of Lemma~\ref{lem:HH}. The following lemma follows from facts about binomials.

\begin{lemma}
Define $B_r(t)=\Pr[ \bin(k_pt,p) =r]$, $C_r(t)=\Pr
[ \bin(k_qt,q) = r]$ and  $ D_r(t)=\Pr[ \bin(k_pt,p) + \bin(k_qt,q) = r]$. Then for $t > r$,
\begin{eqnarray*}
 B_{r+1}(t) &=& \frac{k_p t - r}{r+1} \frac{p}{1-p} B_r(t) \leq t k_p p (1-p)^{-1} B_r(t) \\ 
C_{r+1}(t) &<& \frac{k_q t - r}{r+1} \frac{q}{1-q} C_r(t)  \leq  t k_q q (1-q)^{-1} C_r(t) \\
D_{r+1}(t) & < &\phi t (1 - \max \{p,q \})^{-1} D_r(t)
\end{eqnarray*}
\label{lem:bcdhelper}
\end{lemma}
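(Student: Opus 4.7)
The plan is to handle the three claims separately, each as a direct calculation from the binomial probability mass function; there is no deep obstacle here, only the need to organize the algebra cleanly.

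For $B_{r+1}(t)$, I would write $B_r(t) = \binom{k_p t}{r} p^r (1-p)^{k_p t - r}$ and form the ratio $B_{r+1}(t)/B_r(t)$. The binomial coefficients contribute $\binom{k_p t}{r+1}/\binom{k_p t}{r} = (k_p t - r)/(r+1)$, while the remaining factors combine to $p/(1-p)$. This gives the stated equality. The upper bound then follows from the elementary inequality $(k_p t - r)/(r+1) \leq k_p t$ (valid for $r \geq 0$, $k_p t \geq 0$). The argument for $C_{r+1}(t)$ is identical after swapping $(p, k_p)$ with $(q, k_q)$.

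For the bound on $D_{r+1}(t)$, I would start from the convolution $D_{r+1}(t) = \sum_{i=0}^{r+1} B_i(t) C_{r+1-i}(t)$ and apply the splitting identity $1 = \tfrac{i}{r+1} + \tfrac{r+1-i}{r+1}$ termwise. In the first piece, only $i \geq 1$ contributes, and invoking the recursion for $B_i$ gives
\[
\tfrac{i}{r+1} B_i(t) \;=\; \tfrac{k_p t - i + 1}{r+1}\cdot\tfrac{p}{1-p}\,B_{i-1}(t)\;\leq\;\tfrac{k_p t\,p}{(r+1)(1-p)}\,B_{i-1}(t).
\]
Reindexing $i \mapsto i-1$ collapses the summation into $\tfrac{k_p t\,p}{(r+1)(1-p)}\,D_r(t)$. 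A symmetric manipulation on the second piece, using the recursion for $C_j$, yields $\tfrac{k_q t\,q}{(r+1)(1-q)}\,D_r(t)$. Adding the two and replacing $(1-p)^{-1}, (1-q)^{-1}$ by the larger $(1-\max\{p,q\})^{-1}$ produces
\[
D_{r+1}(t) \;\leq\; \tfrac{\phi t}{(r+1)(1-\max\{p,q\})}\,D_r(t),
\]
which is in fact strictly stronger than the claimed bound (the $1/(r+1)$ factor can simply be discarded to recover the lemma statement).

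The only mildly delicate step is the splitting-and-reindexing trick for $D_{r+1}$; the rest is mechanical. No probabilistic or combinatorial machinery beyond the standard binomial recursion is required, and there are no convergence or regime-restriction subtleties — the identities hold for any $t > r$ with $k_p t, k_q t$ integral in the obvious sense (and the bounds extend continuously otherwise).
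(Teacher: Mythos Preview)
Your proof is correct. The treatment of $B_{r+1}$ and $C_{r+1}$ is identical to the paper's: both form the ratio of consecutive binomial pmf values and read off the stated equality and bound.

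For $D_{r+1}$ you take a genuinely different route. The paper peels off the single term $B_{r+1}(t)C_0(t)$, applies the $C$-recursion to the remaining sum $\sum_{i=0}^r B_i(t)C_{r+1-i}(t)$ to get $\tfrac{tk_qq}{1-q}D_r(t)$, and then crudely bounds $B_r(t)C_0(t)\le D_r(t)$ to absorb the peeled term as $\tfrac{tk_pp}{1-p}D_r(t)$. Your symmetric splitting $1=\tfrac{i}{r+1}+\tfrac{r+1-i}{r+1}$ followed by the $B$- and $C$-recursions on the two halves is more uniform and yields the extra factor $1/(r+1)$, i.e.\ a strictly sharper inequality than what is claimed or proved in the paper. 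The paper's decomposition is slightly quicker to write down; yours is more balanced and loses less, though the gain is not needed downstream.
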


\begin{proof}
We begin with the proof for $B_{r+1}(t)$.
\begin{eqnarray*}
 B_{r+1}(t) &=& \binom{k_p t}{r+1} p^{r+1} (1-p)^{k_p t-r-1} = \frac{k_p t-r}{r+1} \binom{k_p t}{r} \frac{p}{1-p} p^{r} (1-p)^{k_p t-r} \\
& = &  \frac{k_p t-r}{r+1} \frac{p}{1-p} B_r(t) < t k_p p B_r(t)
\end{eqnarray*}
The proof of $C_{r+1}(t)$ follows similar logic. For $D_{r+1}(t)$, we get
\begin{eqnarray*}
 D_{r+1}(t) & =& \sum_{i=0}^{r+1} B_i(t) C_{r+1-i}(t) = \sum_{i=0}^r B_i(t) C_{r+1-i}(t) + B_{r+1}(t) C_0(t) \\
&\leq& \frac{t k_q q}{1-q} \sum_{i=0}^r B_i(t) C_{r-i}(t) + \frac{t k_p p}{1-p}  B_r(t) C_0(t)  \leq \frac{t k_q q}{1-q} \sum_{i=0}^r B_i(t) C_{r-i}(t) + \frac{t k_p p}{1-p} \sum_{i=0}^r B_i(t) C_{r-i}(t) \\
&& \leq \frac{\phi t}{1 - \max \{p, q\}} \sum_{i=0}^r B_i(t) C_{r-i}(t) = \frac{\phi t}{1-\max \{p, q\} } D_r(t)
\end{eqnarray*}
\end{proof}

We are now ready to prove the second half of Lemma~\ref{lem:HH}

\begin{statement}
If $\I(\tau) < k/(3 \phi)$, then $|\HH_{a+1}| < (2/3) |\HH_{a}|$.
\end{statement}

\begin{proof}
Let $t = \I(\tau)/k$. If $\I(\tau) < k/(3 \phi)$, then $t < 1/(3 \phi)$ and $\phi t < 1/3$. This implies $\phi t (1 - \max\{p, q\}) < 2/3$. For every $u \in \HH$, $\Pr[ u \in \HH_a] = D_{a}(t)$. 

\begin{eqnarray*}
\Pr[u \in \HH_{a+1}] = D_{a+1}(t) < \frac{\phi t}{1 - \max\{p,q\}} D_a(t) < \frac{2}{3} D_a(t) < \frac{2}{3} \Pr[u \in \HH_a]
\end{eqnarray*}
\end{proof}

If $\I(\tau) > k/(3 \phi)$ and $t = \I(\tau)/k$, then $t > 1/(3 \phi)$. At this point, Lemma~\ref{lem:closer}, Statement 1 applies and $\I(\tau+1)$ consists of a positive fraction of the nodes. 

\bibliographystyle{abbrv}
\bibliography{references}

\end{document}